\newcounter{sarrow}
\newcommand\xrsquigarrow[1]{%
  \stepcounter{sarrow}
  \mathrel{
    \begin{tikzpicture}[decoration=snake]
      \node[minimum size=1cm] (\thesarrow) {\strut#1};
      \draw[->,decorate] (\thesarrow.south west) -- (\thesarrow.south east);
    \end{tikzpicture}%
  }
}
\newcommand{\cE}{\ensuremath{\mathcal E}}
\newcommand{\cF}{\ensuremath{\mathcal F}}
\newcommand{\calG}{\ensuremath{\mathcal G}}
\newcommand{\cH}{\ensuremath{\mathcal H}}
\newcommand{\calI}{\ensuremath{\mathcal I}}
\newcommand{\calL}{\ensuremath{\mathcal L}}
\newcommand{\calP}{\ensuremath{\mathcal P}}
\newcommand{\calT}{\ensuremath{\mathcal T}}
\newcommand{\eps}{\varepsilon}
\renewcommand{\phi}{\varphi}
\DeclareMathOperator*{\E}{\mathbb{E}}
\DeclareMathOperator*{\N}{\mathbb{N}}
\DeclareMathOperator*{\Z}{\mathbb{Z}}
\DeclareMathOperator*{\R}{\mathbb{R}}
\DeclarePairedDelimiter\ceil{\lceil}{\rceil}
\DeclarePairedDelimiter\floor{\lfloor}{\rfloor}
\let\setminus=\smallsetminus
\newcommand{\osref}[2]{%
  \setlength\abovedisplayskip{5pt plus 2pt minus 2pt}
  \setlength\abovedisplayshortskip{5pt plus 2pt minus 2pt}
  \ensuremath{\overset{\text{#1}}{#2}}
}
\newcommand{\Gnp}{G_{n, p}}
\newcommand{\Bin}{\ensuremath{\mathrm{Bin}}}
\newcommand{\bw}{\ensuremath{\mathbf{w}}}
\newcommand{\obw}{\ensuremath{\overline{\mathbf{w}}}}
\newcommand{\bx}{\ensuremath{\mathbf{x}}}
\newcommand{\by}{\ensuremath{\mathbf{y}}}
\newcommand{\oby}{\ensuremath{\overline{\mathbf{y}}}}
\newcommand{\binoms}[2]{{\textstyle\binom{#1}{#2}}} 
\colorlet{royalred}{red!70!black}
\definecolor{royalazure}{rgb}{0.0, 0.22, 0.66}
\declaretheorem[parent=section]{theorem}
\declaretheorem[sibling=theorem]{lemma}
\declaretheorem[sibling=theorem]{proposition}
\declaretheorem[sibling=theorem]{claim}
\declaretheorem[sibling=theorem]{corollary}
\declaretheorem[sibling=theorem,style=definition]{definition}
\setlist{itemsep=0.2em, topsep=0.2em, parsep=0.1em, partopsep=0.1em}
\newlength{\bibitemsep}\setlength{\bibitemsep}{0.5pt}
\newlength{\bibparskip}\setlength{\bibparskip}{0.5pt}
\let\oldthebibliography\thebibliography
\renewcommand\thebibliography[1]{%
  \oldthebibliography{#1}%
  \setlength{\parskip}{\bibitemsep}%
  \setlength{\itemsep}{\bibparskip}%
}
\title{Triangle resilience of the square of a Hamilton cycle \\ in random graphs}
\author{
  Manuela Fischer%
  \thanks{Institute of Theoretical Computer Science, ETH
  Z\"{u}rich, 8092 Z\"{u}rich, Switzerland
  \newline
  Email:
  \{\texttt{manuela.fischer}\textbar \texttt{nskoric}\textbar
  \texttt{steger}\textbar \texttt{mtrujic}\}\texttt{@inf.ethz.ch}}
  \and
  Nemanja \v{S}kori\'{c}\footnotemark[1]
  \and
  Angelika Steger\footnotemark[1]
  \and
  Milo\v{s} Truji\'{c}\footnotemark[1] \textsuperscript{,}\thanks{author was
  supported by grant no.\ 200021 169242 of the Swiss National Science
  Foundation.}
}
\date{}
\begin{document}
\maketitle

\begin{abstract}
  Since first introduced by Sudakov and Vu in 2008, the study of resilience
  problems in random graphs received a lot of attention in probabilistic
  combinatorics. Of particular interest are resilience problems of spanning
  structures. It is known that for spanning structures which contain many
  triangles, local resilience cannot prevent an adversary from destroying all
  copies of the structure by removing a negligible amount of edges incident to
  every vertex. In this paper we generalise the notion of local resilience to
  $H$-resilience and demonstrate its usefulness on the containment problem of
  the square of a Hamilton cycle. In particular, we show that there exists a
  constant $C > 0$ such that if $p \geq C\log^3 n/\sqrt{n}$ then w.h.p.\ in
  every subgraph $G$ of a random graph $\Gnp$ there exists the square of a
  Hamilton cycle, provided that every vertex of $G$ remains on at least a $(4/9
  + o(1))$-fraction of its triangles from $\Gnp$. The constant $4/9$ is optimal
  and the value of $p$ slightly improves on the best-known appearance threshold
  of such a structure and is optimal up to the logarithmic factor.
\end{abstract}

\section{Introduction}

One of the central questions of extremal graph theory concerns determining
sufficient conditions for the containment of (spanning) structures. Some of the
most influential examples, dating back to the middle of the previous century,
include Tur\'{a}n's theorem~\cite{turan1941extremalaufgabe} and Dirac's
theorem~\cite{dirac1952some}. The former states that having more than
$\floor{n^2/4}$ edges in a graph with $n$ vertices is sufficient in order for a
triangle to exist, while the latter states that a graph with minimum degree
$\ceil{n/2}$ is Hamiltonian. Several years later, first
P\'{o}sa~\cite{erdos1964problem}, and then Seymour~\cite{seymour1973problem},
conjectured that for any integer $k \geq 2$, a graph $G$ with $n$ vertices and
minimum degree $\delta(G) \geq kn/(k + 1)$ contains the $k$-{\em th power of a
Hamilton cycle}. For a cycle $C$ and an integer $k \in \N$, the $k$-{\em th
power of a cycle} ($k$-{\em cycle} for short) is obtained by including an edge
between all pairs of vertices with distance on $C$ of at most $k$. The second
power of a cycle is also called the {\em square of a cycle}. It required the
development of powerful tools, most notably Szemer\'{e}di's regularity lemma and
the blow-up lemma, before this conjecture was finally proven by Koml\'{o}s,
S\'{a}rk\"{o}zy, and Szemer\'{e}di~\cite{komlos1998proof}, at least for all
sufficiently large values of $n$.

\begin{theorem}[\cite{komlos1998proof}]\label{thm:KSS}
  For any $k \in \N$, there exists an $n_0 \in \N$ such that if $G$ has order
  $n$ with $n \geq n_0$ and $\delta(G) \geq kn/(k + 1)$, then $G$ contains the
  $k$-th power of a Hamilton cycle.
\end{theorem}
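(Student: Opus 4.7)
The plan is to combine Szemerédi's regularity lemma with the Hajnal–Szemerédi theorem and the blow-up lemma, broadly following the classical Komlós–Sárközy–Szemerédi strategy. First I would apply the regularity lemma to $G$ with parameters $\eps \ll 1/(k+1)$ and large enough $m$, obtaining an $\eps$-regular partition $V_0, V_1, \ldots, V_m$ with $|V_0| \leq \eps n$ and $|V_i| = L$ for $i \geq 1$. The reduced graph $R$ on $[m]$, where $ij$ is an edge if $(V_i, V_j)$ is $\eps$-regular of density at least some $d = d(\eps)$, inherits minimum degree at least $\bigl(k/(k+1) - \gamma\bigr) m$ for $\gamma = \gamma(\eps) \to 0$. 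By the Hajnal–Szemerédi theorem, $R$ then admits a $K_{k+1}$-factor, grouping the clusters into blocks $B_1, \ldots, B_{m/(k+1)}$ of $k+1$ super-nodes each.

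Second, I would arrange the blocks into a cyclic ordering $B_1, B_2, \ldots, B_1$ such that consecutive blocks can be linked. Inside each block, the blow-up lemma produces (essentially) the $k$-th power of a Hamilton path on the $k+1$ clusters, with prescribed endpoints concentrated in two designated clusters; the crucial point is that in a $(k+1)$-partite super-regular tuple the blow-up lemma can embed any spanning subgraph of maximum degree $2k$, in particular the square (for $k=2$) or higher power of a Hamilton path. Between consecutive blocks I would use a short connecting gadget — a constant-length $k$-th-power path through $O(1)$ vertices lying in the overlap region of the two blocks' neighbourhoods in $R$ — whose existence follows from the minimum-degree condition and standard counting in regular pairs.

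The main obstacle, as usual for spanning problems of this kind, is the interaction of three requirements: (i) covering every vertex, including the exceptional set $V_0$ and the $O(1)$ leftovers in each super-cluster; (ii) maintaining $k$-th power adjacencies across block boundaries, where the blow-up lemma does not directly apply; and (iii) matching vertex counts inside each cluster so that the blow-up lemma sees balanced super-regular tuples. The cleanest way to handle (i) and (iii) together is the absorbing method: before running blow-up, reserve a linear-sized absorbing $k$-th-power path $P_{\text{abs}}$ with the property that any small set $T$ of at most $\eps^2 n$ vertices can be inserted into $P_{\text{abs}}$ while preserving its $k$-th-power structure. Designing such absorbers is the most delicate step, since locally the $k$-th power is a $(k+1)$-clique and a single-vertex "switching gadget" must exploit the minimum-degree condition to find, for every $v$, many $(2k+1)$-tuples into which $v$ slots in a $k$-th-power compatible way — this is exactly where the bound $k/(k+1)$ is tight.

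Finally, I would need to treat the near-extremal case separately, when $G$ is close in edit distance to the Turán-type graph $K_{k+1}(\lceil n/(k+1)\rceil, \ldots)$ with one part slightly enlarged: here the regularity argument gives a reduced graph where Hajnal–Szemerédi is close to failing, and one instead argues directly by exhibiting an almost-balanced $(k+1)$-partition and absorbing the small discrepancy by hand. Choosing $n_0$ large enough for the regularity lemma's tower-type constant and for the absorbing bounds to be simultaneously satisfied then yields the theorem.
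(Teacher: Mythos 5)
This statement is Theorem~\ref{thm:KSS}, which the paper imports verbatim from Koml\'{o}s, S\'{a}rk\"{o}zy, and Szemer\'{e}di~\cite{komlos1998proof} and does not prove; there is no in-paper argument to compare yours against. Judged on its own terms, your outline correctly names the skeleton of the known proof --- regularity lemma, Hajnal--Szemer\'{e}di applied to the reduced graph to obtain a $K_{k+1}$-factor of clusters, the blow-up lemma inside each $(k+1)$-tuple, short connecting powers of paths between consecutive blocks, and a separate extremal case --- but it is a plan rather than a proof. Every step you flag as ``the main obstacle'' or ``the most delicate step'' is precisely where the actual work lies, and none of it is carried out: you do not construct the cross-block connecting gadgets (for general $k$ a $k$-th power of a path crossing a block boundary needs $k$ consecutive vertices with prescribed adjacencies in \emph{both} blocks, and showing such configurations exist in regular pairs with the bare minimum-degree bound $kn/(k+1)$ is nontrivial), you do not specify how the vertices of $V_0$ and the leftover vertices are reinserted, and the extremal case is only gestured at.

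One substantive deviation worth noting: the original 1998 proof does not use the absorbing method (which postdates it); it redistributes exceptional vertices directly through the regularity structure and handles divisibility by careful balancing before invoking the blow-up lemma. Your proposal to build a linear-sized absorbing $k$-th-power path is closer in spirit to later re-proofs and to the strategy this very paper uses for the random-graph setting (Section~\ref{sec:absorbers}), but in the dense deterministic setting you would still need to construct single-vertex absorbers from the minimum-degree hypothesis alone, and you give no construction. So the proposal is a reasonable roadmap consistent with how such theorems are proved, but it does not constitute a proof, and it should not be read as reproducing the argument of~\cite{komlos1998proof}.
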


For more history on the problem and similar embedding questions we refer the
reader to the literature, cf.\ e.g.~\cite{bottcher2009proof, chau2011posa,
hajnal1970proof, komlos2001proof, komlos1998posa, levitt2010avoid} and the
survey~\cite{kuhn2009embedding}.

Generalising the type of problems considered in the above theorem, we arrive at
the following Dirac-type question: given a graph property $\calP$, what is the
minimum number $\alpha$ such that every graph $G$ on $n$ vertices and minimum
degree at least $\alpha n$ satisfies $G \in \calP$? This leads to the notion of
{\em local resilience} that we now introduce formally.

\begin{definition}[Local resilience]\label{def:local-resilience}
  Let $G = (V, E)$ be a graph and $\cal P$ a monotone increasing graph property.
  The {\em local resilience} of $G$ with respect to $\calP$ is defined as:
  \begin{align*}
    r(G, \calP) := \min\{ r : \exists \tilde G \subseteq G
    & \text{ such that each } v \in V \text{ satisfies } \\
    & \deg_{\tilde G}(v) \leq r \cdot \deg_{G}(v) \text{ and } G - \tilde G
    \text{ does not have } \calP \}.
  \end{align*}
\end{definition}

Looking back at the aforementioned results, Dirac's theorem implies that the
local resilience of the complete graph $K_n$ with respect to Hamiltonicity is at
least $n/2$ and the theorem of Koml\'{o}s, S\'{a}rk\"{o}zy, and Szemer\'{e}di
implies that the local resilience of `containment of the $k$-th power of a
Hamilton cycle' is at least $n/(k + 1)$. Moreover, it is not too difficult to
construct examples that show that both of these results are optimal (consider,
for example, $k = 2$ and a complete $3$-partite graph with two parts of size $(n
- 1)/3$ and one of size $(n + 2)/3$).

In this paper we study how Theorem~\ref{thm:KSS} can be transferred to the
setting of random graphs. Such transference results recently received
considerable attention including several breakthrough results by Balogh, Morris,
and Samotij~\cite{balogh2015independent}, Conlon and
Gowers~\cite{conlon2016combinatorial}, Conlon, Gowers, Samotij, and
Schacht~\cite{conlon2014klr}, Saxton and Thomason~\cite{saxton2015hypergraph},
and Schacht~\cite{schacht2016extremal}.

We denote by $\Gnp$ the probability space of all graphs with vertex set $[n] :=
\{1, \dotsc, n\}$ where each edge appears randomly with probability $p := p(n)
\in (0, 1)$, independently of all other edges. A systematic study of local
resilience in random graphs was initiated by Sudakov and
Vu~\cite{sudakov2008local} and already led to many beautiful and deep results,
see e.g.~\cite{balogh2011local, allen2020bandwidth, bottcher2013almost,
krivelevich2010resilient, lee2012dirac, montgomery2019hamiltonicity,
nenadov2019resilience} and the recent surveys~\cite{bottcher2017large,
sudakov2017robustness}. Inspired by other transference results (such as the ones
mentioned above as well as many more) from dense graphs to the random setting,
one may be tempted to guess that {\em with high probability}\footnote{We say
that an event holds with high probability (w.h.p.\ for short), if the
probability that it holds tends to $1$ as $n$ tends to infinity.} a random graph
is such that every subgraph with minimum degree roughly $(2/3 + o(1))np$
contains the square of a Hamilton cycle.

On second thoughts, however, one easily sees that this cannot hold for $p \gg
\log n/\sqrt{n}$. An adversary can remove all the edges with both endpoints
lying in the neighbourhood of an arbitrary vertex $v$, thus preventing $v$ from
being in a triangle (which implies in particular that $v$ cannot be contained in
any square of a cycle); note that the deletion of these edges changes the degree
of every other vertex only by $o(np)$. In fact, Huang, Lee, and
Sudakov~\cite{huang2012bandwidth} and Balogh, Lee, and
Samotij~\cite{balogh2012corradi} showed that an adversary can always prevent as
many as $\Omega(p^{-2})$ vertices from being in triangles by deleting $o(np)$
edges touching each vertex, as long as $1/\sqrt{n} \ll p \ll 1$. The former
result shows the claim even when $p$ is a fixed constant, independent of $n$.

In this paper we overcome the obstacles that the notion of local resilience
encounters with respect to containment of spanning structures (that contain
triangles). For this we generalise the notion of local resilience. More
precisely, we restrict the adversary to only remove a fraction of certain
substructures touching each vertex. In the classic definition of local
resilience these substructures correspond to edges. For obtaining the square of
a Hamilton cycle it turns out that one should replace edges by triangles. This
then motivates the following question:
\begin{center}
  {\em How many triangles at a vertex does an adversary have to destroy in order
  to obtain a graph without the square of a Hamilton cycle?}
\end{center}

We capture this question under the notion of {\em $K_3$-resilience}, or more
generally {\em $H$-resilience} as given in the following definition.
\begin{definition}
  Let $H$ be a fixed graph and let $\calP$ be a monotone increasing graph
  property. For a graph $G$, the $H$-{\em resilience} of $G$ with respect to
  $\calP$ is defined as
  \begin{align*}
    r_H(G, \calP) := \min\{ r : \exists \tilde G \subseteq G
    & \text{ such that the removal of $\tilde G$ destroys at most an
    $r$-fraction} \\
    & \text{of copies of $H$ in $G$ at every vertex} \text{ and $G - \tilde G$
    does not have } \calP \}.
  \end{align*}
\end{definition}
In the main result of this paper we show that the above definition can be used
in order to determine the resilience of $\Gnp$ with respect to the containment
of the square of a Hamilton cycle.
\begin{theorem}\label{thm:main-theorem}
  The $K_3$-resilience of $\Gnp$ w.r.t.\ the containment of the square of a
  Hamilton cycle is w.h.p.\ $5/9 \pm o(1)$, provided that $p \gg n^{-1/2}
  \log^{3} n$.
\end{theorem}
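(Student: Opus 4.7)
The statement contains both an upper and a lower bound on the $K_3$-resilience, which I treat separately.

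For the upper bound $r_{K_3}(\Gnp) \leq 5/9 + o(1)$, the plan is to exhibit a simple adversary. Fix any $S \subseteq [n]$ with $|S| = \lfloor n/3 \rfloor + 1$ and let $\tilde G$ consist of all edges of $\Gnp$ with both endpoints in $S$. Removing $\tilde G$ turns $S$ into an independent set, so because the square of a Hamilton cycle on $n$ vertices has independence number at most $\lceil n/3 \rceil$ (any three consecutive vertices span a triangle), the remaining graph cannot contain the target structure. It only remains to bound the fraction of triangles destroyed at each vertex. A triangle $\{v,x,y\}$ at $v \in S$ survives iff $\{x,y\} \subseteq V \setminus S$, while one at $v \notin S$ survives unless $\{x,y\} \subseteq S$. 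Chernoff bounds, valid since $n^2 p^3 \gg \log^9 n$, give the required concentration of triangle counts uniformly over $v$, showing that the destroyed fraction is at most $\binom{|S|}{2}/\binom{n-1}{2}+o(1) = 1/9 + o(1)$ for $v \notin S$, and at most $1-\binom{n-|S|}{2}/\binom{n-1}{2}+o(1) = 5/9 + o(1)$ for $v \in S$.

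For the lower bound $r_{K_3}(\Gnp) \geq 5/9 - o(1)$ I must show that any spanning subgraph $\tilde G \subseteq \Gnp$ in which every vertex retains at least a $(4/9+o(1))$-fraction of its triangles from $\Gnp$ already contains the square of a Hamilton cycle. My plan is the absorption method combined with sparse (pseudo-)regularity. First, a double-counting argument converts the triangle-preservation hypothesis into a quasi-pseudorandom minimum-degree condition at scale $np$: after discarding a negligible set of atypical vertices, $\tilde G$ behaves like a subgraph of $\Gnp$ with $\delta(\tilde G) \geq (2/3 + o(1))np$ and inherited edge-distribution properties. Second, I build a sublinear absorbing square-path $P_{\text{abs}}$ that can swallow any $o(n)$-sized leftover; the existence of enough absorbing gadgets per vertex follows from the abundance of triangles plus the codegree bound inherited from $\Gnp$. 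Third, I apply a sparse regularity lemma for $\Gnp$; the reduced graph inherits the minimum-degree condition, so by Theorem~\ref{thm:KSS} it contains the square of a Hamilton cycle, which can be lifted through a sparse blow-up / KŁR-type embedding to an almost-spanning square path in $\tilde G - V(P_{\text{abs}})$. Finally, I glue $P_{\text{abs}}$ to this almost-spanning path with a short square-path connection lemma, and absorb the remaining vertices into $P_{\text{abs}}$.

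The main obstacle is the very first step: converting a triangle-based local density condition into a structural condition sharp enough to drive the regularity and blow-up machinery. The hypothesis is tight exactly at the adversary above, so a pure averaging argument yields a codegree bound that is a shade too weak to invoke KŁR directly. One has to perform a stability analysis showing that either $\tilde G$ already has minimum degree $(2/3 + o(1))np$ in the strong pseudorandom sense, or else $\tilde G$ is structurally close to the independent-set adversary, in which case the square of a Hamilton cycle must be constructed by a direct combinatorial argument that exploits the residual density inside the almost-independent set. A secondary difficulty is that $p = n^{-1/2}\,\polylog(n)$ sits essentially at the appearance threshold, so the sparse blow-up step and the absorbing-gadget counts operate at the boundary of applicability; the $\log^3 n$ slack in the hypothesis is precisely what is needed to union-bound the concentration of all the relevant substructure counts.
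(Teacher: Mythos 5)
Your upper-bound construction is essentially the paper's (the paper removes the edges inside a set $V_1$ of size slightly above $n/3$ and argues via the maximum number of vertex-disjoint triangles rather than via the independence number of $C_n^2$, but these are interchangeable). One technical caveat: the number of triangles at a vertex is a sum of \emph{dependent} indicators, so "Chernoff bounds" is not the right tool; you need Janson's inequality for the lower tail of the surviving count and a standard upper-tail bound for the total count (both available at this density), which is exactly what the paper does. Likewise, your worry about a "stability analysis" in the reduction from the triangle condition to a degree condition is unnecessary: the paper's reduction is a direct computation (a vertex of degree below $(2/3+\gamma/4)np$ lies on at most $(1+\eps)\binom{\deg(v)}{2}p < (4/9+\gamma/2)\binom{n}{2}p^3$ triangles, a contradiction), plus the deletion of edges lying on fewer than $\eps np^2$ triangles to secure the codegree condition.

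The genuine gap is in the embedding machinery for the lower bound. You propose to obtain the almost-spanning square path and the connections via a sparse regularity lemma, Theorem~\ref{thm:KSS} in the reduced graph, and a "sparse blow-up / K\L R-type embedding". This does not work at $p = n^{-1/2}\polylog n$: the available sparse blow-up lemmas require $p \gg (\log n/n)^{1/\Delta(H)}$, which for $H = C_n^2$ (with $\Delta = 4$) means $p \gg n^{-1/4}$, two polynomial orders denser than the regime of the theorem; and the K\L R counting lemma gives (one-sided) counts of \emph{fixed} subgraphs, not spanning embeddings. This obstruction is precisely why the paper avoids regularity altogether: the almost-spanning square cycle is imported from a separate result of \v{S}kori\'c, Steger, and Truji\'c (Theorem~\ref{thm:almost-spanning-square}, itself proved by non-regularity means) combined with a bootstrapping argument (the Covering Claim), and the "short square-path connection lemma" you invoke as routine glue is in fact the technical heart of the paper (Lemma~\ref{lemma:connecting-lemma}): since Janson-type second-moment arguments are unavailable inside an adversarial subgraph of $\Gnp$, the connections must be built by the bespoke projection-graph edge-expansion analysis of Sections 7--9 (Lemma~\ref{lem:everything}, Claims~\ref{cl:one-reach-all} and~\ref{claim:how-blocks-can-grow}). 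As written, your plan for the lower bound would stall at both the almost-spanning step and the connecting step.
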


In other words, the above theorem shows that w.h.p.\ the adversary needs to
delete more than a $(5/9)$-fraction of the triangles lying on each vertex in
order to destroy all copies of the square of a Hamilton cycle in $\Gnp$. The
density value $p$ is optimal up to the logarithmic factor, as a simple
application of the first moment method shows that for $p \ll n^{-1/2}$ a random
graph $\Gnp$ w.h.p.\ does not contain the square of a Hamilton cycle.
Additionally, this result marginally improves upon the current {\em appearance
threshold} for the square of a Hamilton cycle in $\Gnp$ by Nenadov and the
second author~\cite{nenadov2019powers}, by a $\log n$ factor in the density $p$.

The second result of the paper rephrases the above theorem in slightly different
terms. From Theorem~\ref{thm:KSS} we know that in the dense case it is
sufficient to require that the minimum degree is at least $2n/3$. Although the
analogous statement cannot be true in the case of random graphs, we prove that
w.h.p.\ every spanning subgraph which satisfies the correct minimum degree
condition and the additional property that each edge is contained in $\alpha
np^2$ triangles, contains the square of a Hamilton cycle. Before we can state
this result precisely, we need a definition.

\begin{definition}
  Let $\Gamma$ be a graph on $n$ vertices. We denote by $\calG(\Gamma, n,
  \alpha, p)$ the family of all spanning subgraphs $G \subseteq \Gamma$ that
  satisfy the following properties:
  \begin{enumerate}
    \item for every $v \in V(G)$: $\deg_G(v) \geq (2/3 + \alpha) np$, and
    \item for every $\{u, v\} \in E(G)$: $|N_G(u) \cap N_G(v)| \geq \alpha
      np^2$.
  \end{enumerate}
\end{definition}

With this at hand we can state the second result of the paper.

\begin{theorem}\label{thm:square-res-codegree}
  For every $\alpha > 0$ there exists a positive constant $C := C(\alpha)$, such
  that a random graph $\Gamma \sim \Gnp$ w.h.p.\ has the following property,
  provided that $p \geq C n^{-1/2} \log^{3} n$. Each member of $\calG(\Gamma, n,
  \alpha, p)$ contains the square of a Hamilton cycle.
\end{theorem}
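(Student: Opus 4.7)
The plan is to prove Theorem~\ref{thm:square-res-codegree} via the absorbing method tailored to the square of a Hamilton cycle, leveraging the pseudo-random host $\Gamma \sim \Gnp$ to power the counting arguments. First I would collect the standard quasirandomness properties of $\Gamma$ at density $p \geq Cn^{-1/2}\log^3 n$: a Chernoff and union bound argument shows that w.h.p.\ for every pair of subsets $A, B \subseteq V(\Gamma)$ of size $\geq n/\polylog n$, the numbers of edges, cherries, and triangles between/among them are within a $(1 \pm o(1))$-factor of their expectations; in particular codegrees and common-triangle counts are concentrated. Together with the two hypotheses defining $\calG(\Gamma, n, \alpha, p)$, this means that any $G \in \calG(\Gamma, n, \alpha, p)$ inherits strong expansion: every vertex has $\geq (2/3 + \alpha)np$ neighbours, every edge lies in $\geq \alpha np^2$ triangles of $G$, and for most small tuples the common neighbourhoods of their vertices in $G$ have the expected order.

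With this setup in place I would execute the three-step absorption framework. \emph{Step 1 (absorbing path).} For each vertex $v \in V(G)$ I need an \emph{absorber}, a short square path in $G$ whose two endpoint pairs are preserved after inserting $v$ into the interior. Concretely, I would fix a small rigid gadget on $O(1)$ vertices whose embeddings into $G$ realise this absorbing property, and count its embeddings using the min-degree and codegree conditions; each successive extension step has $\Omega(np^2)$ choices, so the total number of $v$-absorbers is polynomially large in $n$, enough for a random sampling and a Markov/threshold argument to produce a single short square path $P_A$ containing many absorbers for every $v$ simultaneously. Individual absorbers are glued together using the connecting lemma below. This step is the principal obstacle: at the near-optimal sparsity $p \sim n^{-1/2}$ a typical triple has $o(1)$ common $G$-neighbours, so the absorber gadget has to be engineered to extend \emph{squares of paths} rather than single vertices, and its existence proved by tight union bounds rather than an off-the-shelf abundance argument that works at higher densities.

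\emph{Step 2 (reservoir and connecting lemma).} Set aside a small random reservoir $R \subseteq V(G) \setminus V(P_A)$ of size $o(n)$ and argue by Chernoff that $R$ inherits the degree and codegree conditions relative to the whole graph. A \emph{connecting lemma} then asserts that any two ordered pairs $(a_1, a_2)$, $(b_1, b_2) \in E(G)$ with sufficiently many common neighbours can be joined by a bounded-length square path whose internal vertices all lie in $R$. I would prove this by an expansion argument in the auxiliary graph on ordered edges of $G$: the codegree bound forces each vertex of this auxiliary graph to have many out-neighbours, and the pseudo-randomness of $\Gamma$ restricted to $R$ converts this into a sprinkling-style statement producing the required path.

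\emph{Step 3 (covering the bulk and closing).} Cover all but $o(|R|)$ vertices of $V(G) \setminus (V(P_A) \cup R)$ by one long square path, either by a sparse regularity and blow-up argument (the Allen--B\"ottcher--H\`an--Kohayakawa--Person style blow-up lemma applied to the reduced graph, whose minimum degree is $\geq (2/3 + \alpha/2)$ by a classical cleanup) or by a more hands-on iterative greedy extension in which at each step the next vertex is chosen among $\Omega(np^2)$ common neighbours supplied by the codegree bound. Connect this long path to the ends of $P_A$ through $R$ via the connecting lemma, use the absorbing property of $P_A$ to swallow the leftover vertices of $R$, and finally close into a Hamilton cycle by one last application of the connecting lemma. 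The sharp degree condition $2/3 + \alpha$ is used only in Step 3 to guarantee enough extensions at the covering stage; every other step draws only on the codegree hypothesis and the quasirandomness of $\Gamma$.
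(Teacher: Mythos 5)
Your high-level architecture (absorber + reservoir + connecting lemma + covering) matches the paper's, but two of your three steps contain gaps that are fatal at the density $p \sim n^{-1/2}\log^3 n$. In Step 1 you propose to count embeddings of a constant-size absorber gadget, conclude there are ``polynomially many'' absorbers per vertex, and select an absorbing path by random sampling plus Markov. At this density each triangle-extension step offers only $\Theta(np^2) = \Theta(\log^6 n)$ choices, so a gadget on $O(1)$ vertices containing a fixed $v$ has at most $np \cdot \polylog(n)$ embeddings --- nowhere near enough for the standard ``sample a random family and show every vertex hits many absorbers'' argument, which needs the count per vertex to exceed $n/\polylog(n)$ by a comfortable margin. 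You flag this as ``the principal obstacle'' but do not resolve it. The paper circumvents it entirely by inverting the quantifiers: it fixes in advance a designated set $X$ of size $\Theta(n/\log^2 n)$, builds one single-vertex absorber for each $x \in X$ \emph{deterministically} via Hall-type star-matching arguments (using the codegree condition and an edge-expansion property), and then handles the genuinely unpredictable leftover of the covering step not by absorption but by matching those vertices into $X$ and threading them through the connecting paths. Your version, in which the absorbing path must swallow an arbitrary leftover subset of the reservoir $R$, would require absorbers for every vertex of $R$, which is exactly the construction that the counting argument cannot deliver here.

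The other two steps also need repair. For the covering step, the sparse blow-up route (Allen--B\"ottcher--H\`an--Kohayakawa--Person) is unavailable: embedding squares of paths has maximum degree $4$, and those blow-up lemmas require $p$ polynomially larger than $n^{-1/2}$. A one-shot greedy also stalls long before covering all but $o(n/\log^2 n)$ vertices. The paper instead invokes its earlier almost-spanning result (every subgraph of $\Gnp$ with minimum degree $(2/3+\alpha)np$ contains a square cycle on $(1-\eps)n$ vertices) and bootstraps it on a dyadically shrinking partition to cover all but $O(n/\log^3 n)$ vertices with $O(\log\log n)$ paths. Finally, your connecting lemma correctly identifies edge expansion in an auxiliary graph of ordered edges as the mechanism, but ``sprinkling'' is not available since $G$ is an adversarial subgraph of an already-exposed $\Gamma$; making the expansion argument work deterministically against the adversary (the projection-graph and block-expansion analysis occupying roughly half the paper) is the actual technical core of the result, not a routine conversion.
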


As in the first result of the paper, the value of $p$ is almost optimal.
Furthermore, the constant $2/3$ in the definition of the class $\calG(\Gamma, n,
\alpha, p)$ cannot be improved, as the same counterexample as in the dense case
works in this scenario as well. Even though the type of conditions in two
theorems above look quite different at first sight, we prove
Theorem~\ref{thm:main-theorem} by a reduction to
Theorem~\ref{thm:square-res-codegree}.

The proof of Theorem~\ref{thm:square-res-codegree} uses the so-called {\em
absorbing method}. In particular, we make use of a strategy paved by Nenadov and
the second author~\cite{nenadov2019powers}. This method is discussed in
Section~\ref{sec:absorbers}. In Section~\ref{sec:preliminaries} we introduce
some notation and probabilistic tools, and state several useful lemmas about
properties of (random) graphs, culminating in Lemma~\ref{lem:everything} about
edge expansion properties. In Section~\ref{sec:K3-res} we give the proof of
Theorem~\ref{thm:main-theorem} by a reduction to
Theorem~\ref{thm:square-res-codegree}. We also show in this section that the
constant $5/9$ in Theorem~\ref{thm:main-theorem} is best possible. In
Section~\ref{sec:graph-definitions} we introduce several classes and definitions
of graphs which we rely on throughout the paper. In Section~\ref{sec:theproof}
we give the proof of Theorem~\ref{thm:square-res-codegree} modulo several
lemmas. Each of the subsequent
Sections~\ref{sec:absorbers}--\ref{sec:block-expansion} are dedicated to the
proof of one of the technical lemmas and/or claims. Finally, we conclude by
discussing some related open problems in Section~\ref{sec:conclusion}.

\section{Tools and preliminaries}\label{sec:preliminaries}

Our graph theoretic notation is standard (see, e.g.~\cite{bondy2008graph}). In
particular, for a graph $G = (V, E)$ we denote by $N_G(v)$ the neighbourhood of
a vertex $v \in V$ and by $\deg_{G}(v)$ its size, i.e.\ $\deg_{G}(v) =
|N_G(v)|$. Similarly, for $X \subseteq V$ we write $N_G(X)$ for the union of
neighbourhoods of vertices in $X$, that is $N_G(X) := \{ u : \{v, u\} \in E
\text{ and } v \in X\}$. Furthermore, for $X, Y \subseteq V$, we let $N_G(X, Y)$
denote $N_G(X) \cap Y$ and if $X$ consists of a single vertex we abbreviate
$N_G(\{x\}, Y)$ to $N_G(x, Y)$. If $X, Y \subseteq V$ are disjoint subsets of
vertices we write $e_G(X, Y)$ for the number of edges with one endpoint in $X$
and the other in $Y$. We use a set of edges $I \subseteq E$ interchangeably as a
set of edges and a (sub)graph. In particular, we write $\deg_{I}(v)$ to denote
the number of edges from $I$ that are incident to a vertex $v$ and $e_{I}(X, Y)$
for the number of edges in $I$ that have an endpoint in each of the subsets $X$
and $Y$. We omit the subscript $G$ (resp.\ $I$) whenever it is clear from the
context to which graph $G$ we refer to. For $k, \ell \in \N$ and a cycle
$C_\ell$ with $\ell$ vertices, we let $C^{k}_{\ell}$ denote the $k$-th power of
$C_\ell$, that is a graph obtained by adding an edge between any two vertices of
$C_\ell$ which are at distance at most $k$. Given two graphs $H$ and $G$, and a
function $f \colon V(H) \to V(G)$, we say that $f$ is an {\em embedding} of $H$
into $G$ if it is an injection and for all $\{v, u\} \in E(H)$ we have $\{f(u),
f(v)\} \in E(G)$.

For an integer $k \geq 2$ and a set $V$ we write $\binom{V}{k}$ for the family
of all subsets of $V$ with cardinality exactly $k$. We write $V^k$ to denote the
family of all ordered $k$-tuples of $V$ whose entries are pairwise different,
that is $V^k := \{ (v_1, \dotsc, v_k) : v_i \in V \text{ for all } i \in [k]
\text{ and } v_i \neq v_j \text{ for all } 1 \leq i < j \leq k \}$. An element
of $V^k$ is usually denoted by a lower case bold letter. Given $\bw = (v_1,
\dotsc, v_k) \in V^k$, we write $\obw$ to denote the tuple obtained by reversing
the order in $\bw$, i.e.\ $\obw = (v_k, \dotsc, v_1)$. Moreover, for two ordered
tuples $\bw_1$ and $\bw_2$, the tuple $\bw_1 \bw_2$ is an ordered tuple obtained
by concatenation of $\bw_1$ and $\bw_2$. For a function $g$ applicable to the
elements of a tuple $(x_1, \dotsc, x_n)$ we for convenience shorten $(g(x_1),
\dotsc, g(x_n))$ to $g(x_1, \dotsc, x_n)$.

For an integer $n \in \N$ we write $[n] := \{1, \dotsc, n\}$. Given $a, b, c, x
\in \R$ we write $x \in (a \pm b)c$ to denote $(a - b)c \leq x \leq (a + b)c$.
We make use of the standard asymptotic notation, $o$, $O$, $\omega$, $\Omega$,
and $\Theta$. For two functions $a$ and $b$, we write $a \ll b$ to denote $a =
o(b)$ and similarly $a \gg b$ for $a = \omega(b)$. All logarithms are with
respect to base~$e$. We omit floors and ceilings whenever they are not of
importance. Lastly, we write $C_{5.1}$ to indicate that the constant $C_{5.1}$
is given by Theorem/Lemma/Claim~{5.1}.

The following statement about $r$-star-matchings is an easy corollary of Hall's
matching theorem~\cite{hall1935representatives}. A star of size $r$ ($r$-{\em
star}, for short) is a complete bipartite graph $K_{1, r}$ with the vertex
adjacent to all others being the {\em centre}.

\begin{lemma}[$r$-star-matching]\label{lem:star-hall}
  Let $r \geq 1$ be an integer and let $G = (A \cup B, E)$ be a bipartite graph.
  If for every subset $A' \subseteq A$ it holds that $|N(A')| \ge r|A'|$, then
  $G$ contains a collection of pairwise disjoint $r$-stars, such that the
  centres of these stars cover all vertices in $A$.
\end{lemma}
\begin{proof}
  Consider the `blow-up' of $G$ in which each vertex in $A$ is replaced by $r$
  copies that are connected to the same vertices as the original vertex. Then
  this new graph satisfies Hall's condition and thus contains a matching that
  saturates all copies of the vertices in $A$. The corollary follows by
  contracting the copies of each vertex.
\end{proof}

We also make use of a generalised version of Hall's theorem due to Haxell which
has recently seen a surge of applications in embedding spanning structures into
random graphs, especially in the resilience setting.

\begin{theorem}[\cite{haxell1995condition}]\label{thm:haxells-theorem}
  Let $\cH = (A \cup B, E)$ be an $r$-uniform hypergraph such that $|A \cap e| =
  1$ and $|B \cap e|= r - 1$ for every edge $e \in E$. If for every $A'
  \subseteq A$ and $B' \subseteq B$ such that $|B'| \leq (2r - 3)|A'|$ there is
  an edge $e \in E$ intersecting $A'$ but not $B'$, then $\cH$ contains an
  $A$-saturating matching (that is, a collection of vertex-disjoint hyperedges
  whose union contains A).
\end{theorem}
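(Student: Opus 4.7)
The plan is to prove Haxell's theorem by the classical augmenting-set argument. Proceed by contradiction: suppose $\cH$ admits no $A$-saturating matching, and let $M$ be a matching that is maximum and, subject to this, minimizes some fixed secondary lexicographic weight on edges (this secondary condition will be invoked only in the augmentation step). Since $M$ does not saturate $A$, fix a vertex $a_0 \in A$ that is not covered by $M$.

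Starting from $A_0 = \{a_0\}$ and $B_0 = \emptyset$, I would iteratively construct growing sets $A_i \subseteq A$ and $B_i \subseteq B$, together with sequences of edges $e_0, e_1, \ldots \in E$ and $f_1, f_2, \ldots \in M$, maintaining the invariants $A_i = \{a_0, a_1, \ldots, a_i\}$; $a_j$ is the $A$-endpoint of $f_j$ for each $j \geq 1$; $B_i$ collects the $B$-vertices of $e_0, \ldots, e_{i-1}$ and $f_1, \ldots, f_i$; and $|B_i| \leq (2r - 3)|A_i|$. At stage $i$ the hypothesis applies to $A_i, B_i$ and yields an edge $e_i$ whose $A$-endpoint lies in $A_i$ and which is disjoint from $B_i$. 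If $e_i$ avoids every edge of $M$, then a suitable suffix $f_j, \ldots, f_i$ can be swapped out of $M$ and replaced by $e_{j-1}, \ldots, e_i$ to yield a matching of size $|M| + 1$, contradicting the maximality of $M$. Otherwise $e_i$ meets some new $f \in M \setminus \{f_1, \ldots, f_i\}$, which can only happen at a $B$-vertex (since no matched $A$-endpoint outside $\{a_1, \ldots, a_i\}$ lies in $A_i$, and $a_0$ is unmatched); set $f_{i+1} := f$, let $a_{i+1}$ be its $A$-endpoint, and update. Because $e_i$ and $f_{i+1}$ share a $B$-vertex lying outside $B_i$, at most $(r - 1) + (r - 1) - 1 = 2r - 3$ new $B$-vertices enter $B_{i+1}$, preserving the invariant $|B_{i+1}| \leq (2r - 3)|A_{i+1}|$.

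Since $|A_i|$ strictly increases while bounded by $|A|$, the procedure must terminate, and termination can only occur in the augmentation case, producing the desired contradiction. The main obstacle, and the hard part of the proof, will be rigorously verifying that the swap produces a valid matching, i.e.\ that the selected $e_k$'s are pairwise disjoint and disjoint from the un-swapped edges of $M$. This is precisely where the secondary minimality of $M$ is used: any conflict between some $e_k$ and an un-swapped edge of $M$ would allow one to exhibit a matching of size $|M|$ with strictly smaller secondary value, contradicting the choice of $M$. All other steps reduce to direct applications of the hypothesis combined with the elementary counting $|B_{i+1} \setminus B_i| \leq 2r - 3$ at each iteration.
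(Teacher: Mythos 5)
The paper does not prove this statement; it imports Haxell's theorem as a black box from~\cite{haxell1995condition}, so there is no internal proof to compare against and your proposal must stand on its own. Your skeleton is the right one for the known combinatorial proof: grow an alternating structure rooted at an unsaturated $a_0$, observe that each new connector edge $e_i$ and the matching edge $f_{i+1}$ it hits share a $B$-vertex so that only $2r-3$ new $B$-vertices appear per new $A$-vertex, and invoke the hypothesis on $(A_i,B_i)$ to keep going. That part of the accounting is correct.

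The gap is that everything you label ``the main obstacle, the hard part'' is in fact the entire content of the theorem, and your proposal does not supply it. First, the object you build is a tree, not a path: several connector edges $e_k$ may emanate from the same vertex $a_j$, so the edges $e_{j-1},\ldots,e_i$ in your ``suffix swap'' need not be pairwise disjoint in $A$, and the bijection between removed $f$'s and inserted $e$'s breaks down. Second, an $e_k$ whose $B$-part avoids $B_k$ may still meet matching edges discovered later (or never), so it can collide with un-swapped edges of $M$; nothing in the construction prevents this. Third, even a legal single swap (replacing $f_{j}$ by an $M$-disjoint $e_i$ with $A$-vertex $a_j$, $j\ge 1$) does not enlarge the matching --- it is a rotation covering the same $A$-vertices --- so ``contradicting maximality of $M$'' only works in the special case $j=0$; otherwise one must show that iterated rotations strictly decrease some well-founded potential until an edge at $a_0$ itself is freed. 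Your ``secondary lexicographic weight'' is precisely that potential, but it is never defined, and the assertion that any conflict produces a matching of strictly smaller secondary value is exactly the statement that needs proof. As written, the argument establishes nothing beyond Hall-type counting; to complete it you would need to specify the extremal choice of $M$ (e.g.\ among matchings saturating the currently reachable $A$-vertices, one minimising an explicit tree-based potential) and verify the descent after each rotation, which is the actual work in Haxell's paper.
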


We repeatedly make use of the following two standard tail estimates used in
random graph theory, cf.\ e.g.~\cite{alon2016probabilistic,
frieze2016introduction}.

\begin{lemma}[Chernoff's inequality]\label{lemma:chernoff}
  Let $X \sim \Bin(n, p)$ and let $\mu := \E[X]$. Then for all $0 < a < 1$:
  \begin{itemize}
    \item $\Pr \left[ X < (1 - a)\mu \right] < e^{-a^2\mu/2}$, and
    \item $\Pr \left[ X > (1 + a)\mu \right] < e^{-a^2\mu/3}$.
  \end{itemize}
  Moreover, the inequalities above also hold if $X$ has the hypergeometric
  distribution with the same mean.
\end{lemma}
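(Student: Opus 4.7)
The plan is to establish the binomial case via the classical Cram\'{e}r--Chernoff exponential-moment method, and then deduce the hypergeometric case by Hoeffding's convexity argument.

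For the upper tail with $X \sim \Bin(n, p)$, I would start from Markov's inequality applied to the moment generating function: for any $t > 0$,
\[
  \Pr[X > (1 + a)\mu] = \Pr\bigl[e^{tX} > e^{t(1 + a)\mu}\bigr] \leq \frac{\E[e^{tX}]}{e^{t(1 + a)\mu}}.
\]
Since $X$ is a sum of $n$ independent Bernoulli$(p)$ variables, $\E[e^{tX}] = (1 - p + p e^t)^n \leq e^{np(e^t - 1)} = e^{\mu(e^t - 1)}$, using $1 + x \leq e^x$. Optimising with $t = \log(1 + a) > 0$ gives
\[
  \Pr[X > (1 + a)\mu] \leq \exp\bigl(\mu \bigl( a - (1 + a)\log(1 + a) \bigr)\bigr),
\]
and it then suffices to verify the elementary one-variable inequality $(1 + a)\log(1 + a) - a \geq a^2/3$ for $0 < a < 1$, say by checking that both sides vanish at $a = 0$ and comparing derivatives. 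The lower tail is analogous, applying Markov to $e^{-tX}$ with $t > 0$, choosing $t = -\log(1 - a)$, and using $a + (1 - a)\log(1 - a) \geq a^2/2$ for $0 < a < 1$ (which similarly follows by a derivative comparison with $0$ at $a = 0$). This gives the two displayed bounds in the binomial case.

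For the hypergeometric case, the plan is to invoke Hoeffding's theorem, which states that if $H$ denotes the number of successes in $n$ draws without replacement from an $N$-element population containing $K$ successes, and $X \sim \Bin(n, K/N)$, then for every continuous convex function $\phi \colon \R \to \R$ one has $\E[\phi(H)] \leq \E[\phi(X)]$. The proof of this is by exhibiting $H$ as a convex combination (in distribution) of averaged ordered sums and applying Jensen's inequality. Applying this to the convex functions $\phi(x) = e^{tx}$ and $\phi(x) = e^{-tx}$ shows that the moment generating function estimates we used in the binomial argument transfer verbatim, so exactly the same Chernoff bounds hold for $H$ with the same mean.

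The main obstacle here is purely expository: the binomial bounds are completely standard, and the only genuinely non-trivial input is Hoeffding's monotonicity of convex expectations under sampling without replacement. Since this is a well-known classical result rather than a novel contribution of the paper, in practice one simply cites Hoeffding (or a textbook such as \cite{alon2004probabilistic, frieze2015introduction}) rather than reproving it.
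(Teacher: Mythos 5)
Your proof is correct and is exactly the standard argument the paper implicitly relies on: the paper does not prove this lemma at all but simply cites it as a standard tail estimate (with references to \cite{alon2004probabilistic, frieze2015introduction}), so there is nothing to compare against beyond noting that your Cram\'{e}r--Chernoff derivation plus Hoeffding's convex-ordering theorem for sampling without replacement is the canonical route. One tiny caveat: for the calculus step $(1+a)\log(1+a) - a \geq a^2/3$, a single derivative comparison does not quite suffice since the second derivative of the difference changes sign at $a = 1/2$; you also need to check the endpoint $a = 1$ (where the derivative of the difference is $\log 2 - 2/3 > 0$), but this is routine.
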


\begin{theorem}[Janson's inequality]\label{thm:janson}
  Let $p \in (0, 1)$ and consider a family $\{ H_i \}_{i \in \calI}$ of
  subgraphs of the complete graph on the vertex set $[n]$. Let $\Gamma \sim
  \Gnp$. For each $i \in \calI$, let $X_i$ denote the indicator random variable
  for the event $\{H_i \subseteq \Gamma\}$ and, for each ordered pair $(i, j)
  \in \calI \times \calI$ with $i \neq j$, write $H_i \sim H_j$ if $E(H_i) \cap
  E(H_j)\neq \varnothing$. Let
  \begin{align*}
    X &:= \sum_{i \in \calI} X_i, \\
    \mu &:= \mathbb{E}[X] = \sum_{i \in \calI} p^{e(H_i)}, \\
    \Delta &:= \sum_{\substack{(i, j) \in \calI \times \calI \\ H_i \sim H_j}}
    \mathbb{E}[X_i X_j] = \sum_{\substack{(i, j) \in \calI \times \calI \\ H_i
    \sim H_j}} p^{e(H_i) + e(H_j) - e(H_i \cap H_j)}.
  \end{align*}
  Then for all $0 < \gamma < 1$ we have
  \[
    \Pr[X < (1 - \gamma)\mu] \le e^{- \frac{\gamma^2 \mu^2}{2(\mu + \Delta)}}.
  \]
\end{theorem}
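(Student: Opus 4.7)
The plan is to proceed via the exponential-moment (Laplace transform) method, which is standard for lower-tail concentration of sums of positively-correlated indicators. For any parameter $t > 0$, Markov's inequality applied to the nonnegative variable $e^{-tX}$ yields
\[
  \Pr[X < (1-\gamma)\mu] \;=\; \Pr\bigl[e^{-tX} > e^{-t(1-\gamma)\mu}\bigr] \;\le\; e^{t(1-\gamma)\mu}\,\mathbb{E}\bigl[e^{-tX}\bigr],
\]
so the whole problem reduces to a sharp upper bound on the Laplace transform $\mathbb{E}[e^{-tX}]$.

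The key step is to establish
\[
  \log \mathbb{E}\bigl[e^{-tX}\bigr] \;\le\; -(1 - e^{-t})\,\mu \;+\; \tfrac{1}{2}(1 - e^{-t})^2\,\Delta.
\]
I would prove this by induction on $|\calI|$, adding the indicators $X_1, X_2, \ldots$ one at a time. Writing $X = X' + X_k$, one has
\[
  \mathbb{E}\bigl[e^{-tX}\bigr] \;=\; \mathbb{E}\bigl[e^{-tX'}\bigr] - (1 - e^{-t})\,\mathbb{E}\bigl[X_k e^{-tX'}\bigr],
\]
and the task is to compare $\mathbb{E}[X_k e^{-tX'}]$ with the product $\mathbb{E}[X_k]\cdot\mathbb{E}[e^{-tX'}]$. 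Since each $X_i$ is a product of independent edge indicators $Y_e$ with $e \in E(H_i)$, conditioning on $X_k = 1$ only alters the joint distribution of those $X_j$ sharing an edge with $H_k$, i.e.\ those with $H_j \sim H_k$. The resulting discrepancy can be expressed in terms of $\sum_{j \sim k} \mathbb{E}[X_j X_k]$, which is precisely how $\Delta$ enters the bound.

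Once the Laplace bound is in hand, the final step is to optimise over $t > 0$. Choosing $t$ so that $1 - e^{-t} \approx \gamma \mu / (\mu + \Delta)$ balances the linear contribution $-(1-e^{-t})\mu$ against the quadratic correction $\tfrac{1}{2}(1-e^{-t})^2 \Delta$ in the exponent; plugging this choice back into the Markov estimate produces exactly
\[
  \Pr[X < (1-\gamma)\mu] \;\le\; \exp\!\left( -\frac{\gamma^2 \mu^2}{2(\mu + \Delta)} \right),
\]
as claimed.

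The main obstacle is the inductive MGF step. Because the $X_i$ are monotone increasing functionals of the independent edge variables $Y_e$, they are positively correlated, and so the naive independence bound $\mathbb{E}[\prod_i e^{-tX_i}] \le \prod_i \mathbb{E}[e^{-tX_i}]$ actually points the wrong way (Harris/FKG gives $\ge$). One therefore cannot simply factorise the Laplace transform; the correlation correction must be extracted quantitatively via the overlap sum $\Delta$, and care is needed to keep the constants tight and to verify that the bound composes cleanly from one induction step to the next. Any alternative path—be it Suen's inequality, a differential-inequality argument for $\mathbb{E}[e^{-tX}]$, or a direct expansion of $\prod_i (1 - (1-e^{-t})X_i)$ followed by inclusion–exclusion—faces essentially the same obstacle, namely controlling the contribution of overlapping pairs $H_i \sim H_j$.
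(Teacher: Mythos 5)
The paper does not prove this statement: Janson's inequality is quoted as a standard tool with references to the literature (Alon--Spencer, Frieze--Karo\'nski), so there is no in-paper proof to compare against. Your proposal follows the standard exponential-moment route, and its skeleton is sound: the Markov step, the identity $\E[e^{-tX}] = \E[e^{-tX'}] - (1-e^{-t})\E[X_k e^{-tX'}]$ obtained from $e^{-tX_k} = 1-(1-e^{-t})X_k$, and the correct diagnosis that Harris/FKG makes $X_k$ and $e^{-tX'}$ negatively associated in the direction you need, so the Laplace transform cannot simply be factorised.

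The genuine gap is that the one step carrying all the content --- a quantitative \emph{lower} bound on $\E[X_k e^{-tX'}]$ with an error controlled by $\sum_{j\sim k}\E[X_jX_k]$ --- is asserted ("the resulting discrepancy can be expressed in terms of\ldots") but never established; you explicitly flag it as the main obstacle and then stop. As written, the proposal is an outline rather than a proof. To close it, the standard (Boppana--Spencer) device is to split $X' = Y_k + Z_k$, where $Y_k$ sums the $X_j$ with $H_j\sim H_k$ and $Z_k$ the rest. Then $Z_k$ is independent of $X_k$ (disjoint edge sets), and conditionally on $X_k=1$ both $e^{-tY_k}$ and $e^{-tZ_k}$ are decreasing in the edge indicators, so FKG gives $\E[X_ke^{-tX'}]\ge \E[X_k]\,\E[e^{-tY_k}\mid X_k=1]\,\E[e^{-tZ_k}]$; one then uses $\E[e^{-tZ_k}]\ge\E[e^{-tX}]$ and $\E[e^{-tY_k}\mid X_k=1]\ge 1-t\,\E[Y_k\mid X_k=1]$, with $\E[X_k]\,\E[Y_k\mid X_k=1]=\sum_{j\sim k}\E[X_jX_k]$. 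Summing over $k$ yields the differential inequality $-\tfrac{d}{dt}\log\E[e^{-tX}]\ge\mu-t(\mu+\Delta)$, whence $\log\E[e^{-tX}]\le -t\mu+\tfrac{t^2}{2}(\mu+\Delta)$ and the choice $t=\gamma\mu/(\mu+\Delta)$ gives exactly the stated bound. I would also caution that your proposed Laplace bound, phrased in $1-e^{-t}$ rather than $t$, forces you to track the discrepancy $t-(1-e^{-t})=\Theta(t^2)$ in the final optimisation; the inequality still comes out, but only because that discrepancy is absorbed by the $\mu$ in the denominator $\mu+\Delta$, so this bookkeeping is not optional.
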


Next, we collect several facts about random graphs mostly concerning the number
of edges and triangles between certain subsets, as well as a simple edge
expansion property.

\begin{lemma}\label{lemma:triangles}
  For every $\eps \in (0, 1/100)$ and $p := p(n) \in (0, 1)$, the random graph
  $\Gamma \sim \Gnp$ w.h.p.\ satisfies the following.

  Let $s \geq \eps^{-10}\log n/p^2$ be an integer. Then there are at least
  $(1-n^{-6})\binom{n}{s}$ subsets $W \subseteq V(\Gamma)$ of size $s$ such that
  the following holds. For every $W' \subseteq W$ of size $|W'| \geq \eps|W|$
  and every family of pairs $\calP \subseteq \binom{V(\Gamma) \setminus W}{2}$
  of size $|\calP| \geq \eps^{-10}\log n/p^2$ and such that no vertex of
  $\Gamma$ appears in more than $1 + 1/p$ pairs from $\calP$, we have
  \[
    \sum_{\{u, v\} \in \calP} |N(u, W') \cap N(v, W')| \leq (1 + \eps) |\calP|
    |W'| p^2.
  \]
\end{lemma}
\begin{proof}
  An easy application of Chernoff's inequality (Lemma~\ref{lemma:chernoff}) and
  the union bound shows that w.h.p.\ every pair of vertices $u, v \in V(\Gamma)$
  have a common neighbourhood which satisfies $|N(u) \cap N(v)| \leq
  (1+\eps^6)np^2$. For a set $W'' \subseteq V(\Gamma)$ and a triple $(u, v, w)
  \in (V(\Gamma) \setminus W'') \times (V(\Gamma) \setminus W'') \times W''$, we
  define a random variable
  \[
    X_{u, v, w} =
      \begin{cases}
        1, & \text{if } \{u, v\} \in \calP \text{ and } \{u, v\} \subseteq N(w),
        \\
        0, & \text{otherwise}.
      \end{cases}
  \]
  Suppose that w.h.p.\ for all $\calP$ as in the statement of the lemma and all
  $W'' \subseteq V(\Gamma)$, $|W''| \geq \eps^{-7}\log n/p^2$, the following
  holds:
  \begin{equation}\label{eq:triangles-eq1}
    \sum_{\{u, v\} \in \calP} \sum_{w \in W''} X_{u, v, w} \ge (1 - \eps^3)
    |\calP| |W''| p^2.
  \end{equation}
  Condition on the fact that $\Gamma \sim \Gnp$ satisfies these two properties,
  which happens with high probability. We claim that this is sufficient in order
  to show the lemma.

  Let $W \subseteq V(\Gamma)$ be a subset of size $s \geq \eps^{-10}\log n/p^2$
  chosen uniformly at random among all such subsets. For a fixed pair $u, v \in
  V(\Gamma)$, from Chernoff's inequality we have
  \[
    \Pr\big[|N(u, W) \cap N(v, W)| > (1+\eps^3)|W|p^2\big] \leq
    e^{-(\eps^3/3)|W|p^2} \leq e^{-\eps^{-6}\log n} < 1/n^9,
  \]
  and so by the union bound we conclude that with probability at least $1 -
  o(n^{-6})$ for every two vertices $u, v \in V(\Gamma)$ it holds that
  \begin{equation}\label{eq:triangles-eq2}
    |N(u, W) \cap N(v, W)| \le (1 + \eps^3) |W| p^2.
  \end{equation}
  Let $W' \subseteq W$ be of size $|W'| \geq \eps|W|$, let $X = \sum_{\{u, v\}
  \in \calP} \sum_{w \in W'} X_{u, v, w}$, and note that $X$ counts exactly the
  quantity we are interested in. Take $W'' := W \setminus W'$. Then, trivially,
  \[
    X = \sum_{\{u, v\} \in \calP} \sum_{w \in W'} X_{u, v, w} = \sum_{\{u, v\}
    \in \calP} \sum_{w \in W} X_{u, v, w} - \sum_{\{u, v\} \in \calP} \sum_{w
    \in W''} X_{u, v, w}.
  \]
  The first term on the right hand side of the previous equation is by
  \eqref{eq:triangles-eq2} bounded from above by $(1 + \eps^3)|\calP||W|p^2$.
  If $|W''| \geq \eps^3|W| \geq \eps^{-7}\log n/p^2$, then we can use the lower
  bound from \eqref{eq:triangles-eq1} to obtain
  \[
    X \leq (1 + \eps^3)|\calP||W|p^2 - (1 - \eps^3)|\calP||W''|p^2 \leq (1 +
    \eps^3)|\calP||W'|p^2 + 2\eps^3|\calP||W|p^2 \leq (1 + \eps)|\calP||W'|p^2,
  \]
  where the last inequality holds because $|W'| \ge \eps|W|$ and $\eps < 1/100$.
  In case $|W''| < \eps^3|W|$ we have $|W'| \ge (1 - \eps^3)|W|$ and thus from
  \eqref{eq:triangles-eq2} we have
  \[
    X \leq (1 + \eps^3)|\calP||W|p^2 \leq \frac{1 + \eps^3}{1 - \eps^3}
    |\calP||W'|p^2 \leq (1 + \eps)|\calP| |W'|p^2,
  \]
  since $\eps < 1/100$. In conclusion, $W$ satisfies the assertion of the lemma
  with probability at least $1 - o(n^{-6})$ which implies the desired statement.
  It remains to show that \eqref{eq:triangles-eq1} is indeed true.

  Denote the left hand side of \eqref{eq:triangles-eq1} by $X''$. By linearity
  of expectation we have $\mu := \E[X''] = |\calP| |W''| p^2$. For two triples
  $(u_1, v_1, w_1)$ and $(u_2, v_2, w_2)$ we write $(u_1, v_1, w_1) \sim
  (u_2, v_2, w_2)$ if their corresponding random variables are dependent. Note
  that $(u_1, v_1, w_1)$ and $(u_2, v_2, w_2)$ can only be dependent if $|\{u_1,
  v_1\} \cap \{u_2, v_2\}| = 1$ and $w_1 = w_2$. Thus
  \[
    \Delta := \sum_{(u_1, v_1, w_1) \sim (u_2, v_2, w_2)} p^3 \le |\calP| |W''|
    \frac{2}{p} p^3 = 2\mu.
  \]
  By Janson's inequality (Theorem~\ref{thm:janson}) it follows that
  \[
    \Pr[ X'' < (1 - \eps^3)|\calP||W''|p^2 ] \le e^{- \eps^6\mu^2/(6 \mu)} =
    e^{-\eps^6|\calP||W''|p^2/6}.
  \]
  Let us denote $\eps^{-10} \log n/p^2$ by $t$. By the union bound over all
  choices of $\calP$ and $W''$ and by using standard bounds on binomial
  coefficients, we get
  \begin{align*}
    \sum_{x = t}^{n^2} \binom{n^2}{x} \sum_{y = \eps^3t}^{n} \binom{n}{y}
    e^{-\frac{\eps^6}{6} x y p^2}
    & \leq \sum_{x = t}^{n^2} \sum_{y = \eps^3t}^{n} e^{2x\log n} \cdot e^{y\log
    n} \cdot e^{-\frac{\eps^9}{6}xyp^2} \\
    &\leq \sum_{x = t}^{n^2} \sum_{y = \eps^3t}^{n} e^{(2x+y)\log n -
    \max\{x,y\} \cdot \frac{1}{6\eps}\log n} \\
    &\leq \sum_{x = t}^{n^2} \sum_{y = \eps^3t}^{n} e^{-10\max\{x,y\}\log n}
    \leq n^3 \cdot e^{-10\eps^{-10}\log^2 n/p^2} < 1/n^8.
  \end{align*}
  This implies that with probability at least $1 - o(n^{-5})$ we have
  \[
    \sum_{\{u, v\} \in \calP} \sum_{w \in W''} X_{u, v, w} \ge
    (1-\eps^3)|\calP||W''|p^2,
  \]
  for all permissible $\calP$ and $W''$, as required.
\end{proof}

\begin{lemma}\label{lemma:edge-expansion}
  For every $\eps \in (0, 1/300)$ and $p := p(n) \in (0, 1)$, the random graph
  $\Gamma \sim \Gnp$ w.h.p.\ satisfies the following.

  Let $s \geq \eps^{-3}\log n/p^2$ be an integer. Then there are at least
  $(1-n^{-6})\binom{n}{s}$ subsets $W \subseteq V(\Gamma)$ of size $s$ such that
  the following holds. For every family of pairs $\calP \subseteq
  \binom{V(\Gamma) \setminus W}{2}$ such that no vertex of $\Gamma$ appears in
  more than $1+\eps/p$ pairs from $\calP$ and $|\calP| \leq 1+\eps/p^2$, we have
  \[
    \Big| \bigcup_{\{u, v\} \in \calP} \big(N(u, W) \cap N(v, W)\big) \Big| =
    (1\pm5\eps)|\calP||W|p^2.
  \]
\end{lemma}
\begin{proof}
  Fix a $\calP$ as in the statement and let $U$ denote the set of vertices of
  $\Gamma$ not appearing in $\calP$; note that $|U| = (1-o(1))n$. Let $q$ denote
  the probability for a vertex $w \in U$ to be in the common neighbourhood of
  $u$ and $v$, for some $\{u, v\} \in \calP$, and let $\cE_{u, v}$ denote this
  event for fixed $u$ and $v$. By the union bound we have that $q \le
  |\calP|p^2$. As for the lower bound on $q$, we use the inclusion-exclusion
  principle (Bonferroni's inequality) to get
  \begin{align*}
    q & \geq \sum_{\{u, v\} \in \calP} \Pr[\cE_{u, v}] - \sum_{\{u, v\} \cap
    \{u', v'\}| < 2} \Pr[\cE_{u, v} \land \cE_{u', v'}] \\
    & = \sum_{\{u, v\} \in \calP} \Pr[\cE_{u, v}] -
    \sum_{|\{u, v\} \cap \{u', v'\}| = 0} \Pr[\cE_{u, v} \land \cE_{u', v'}] -
    \sum_{|\{u, v\} \cap \{u', v'\}| = 1} \Pr[\cE_{u, v} \land \cE_{u', v'}].
  \end{align*}
  Using the fact that no vertex of $\Gamma$ appears in more than $1 + \eps/p$
  pairs from $\calP$,
  \[
    q \geq |\calP|p^2 - |\calP|(|\calP| - 1) p^4 - |\calP| \frac{\eps}{p} p^3
    \geq (1 - 2\eps)|\calP|p^2.
  \]
  Let us denote $\bigcup_{\{u, v\} \in \calP} \big(N(u, U) \cap N(v, U)\big)$ by
  $Z$. Observe that the expected size of $Z$ is $|U|q$ and thus by Chernoff's
  inequality and our estimates for $q$ we get
  \begin{equation}\label{eq:edge-expansion-eq1}
    \Pr[|Z| \leq (1-4\eps)|\calP|np^2] \leq \Pr[|Z| \leq (1-\eps)|U|q] \leq
    e^{-\frac{\eps^2}{2}(1-2\eps)|\calP||U|p^2} \leq
    e^{-\frac{\eps^2}{8}|\calP|np^2}.
  \end{equation}
  Similarly, we have
  \begin{equation}\label{eq:edge-expansion-eq2}
    \Pr[|Z| \geq (1+4\eps)|\calP|np^2] \leq \Pr[|Z| \geq (1+\eps)|U|q] \leq
    e^{-\frac{\eps^2}{3}(1-2\eps)|\calP||U|p^2} \leq
    e^{-\frac{\eps^2}{12}|\calP|np^2}.
  \end{equation}
  Since $np^2 \geq \eps^{-3}\log n$, by combining \eqref{eq:edge-expansion-eq1} and
  \eqref{eq:edge-expansion-eq2} together with the union bound over all choices
  for $\calP$ we get that with high probability for all such sets $\calP$ and
  $u, v \in \calP$
  \begin{equation}\label{eq:edge-expansion-eq3}
    \Big|\bigcup_{\{u,v\} \in \calP} \big(N(u, U) \cap N(v, U)\big)\Big| = (1
    \pm 4\eps)|\calP|np^2.
  \end{equation}
  Condition on $\Gamma \sim \Gnp$ satisfying this. Let $W \subseteq V(\Gamma)$
  be a set of size $s \geq \eps^{-3}\log n/p^2$ chosen uniformly at random among
  all such sets. Let $\calP$ be as in the statement of the lemma, and note that
  \eqref{eq:edge-expansion-eq3} is fulfilled for $\calP$ and the corresponding
  set $U$. By Chernoff's inequality, we have
  \[
    \Big|\bigcup_{\{u,v\} \in \calP} \big(N(u, W) \cap N(v, W)\big)\Big| = (1
    \pm 5\eps)|\calP||W|p^2,
  \]
  with probability at least
  \[
    1 - e^{-\frac{2\eps^2}{3}|\calP||W|p^2} \geq 1 - e^{-\frac{2}{3\eps}\log n}
    \geq 1 - n^{-6},
  \]
  as required.
\end{proof}

The following definition is used at various places throughout the paper. It
captures essential properties of a random graph $\Gamma \sim \Gnp$ and its
subgraphs $G \in \calG(\Gamma, \alpha, \eps, p)$ which are used in order to
prove the main result. Some of the properties follow easily from others; we list
them all separately for ease of reference later on.

\begin{definition}
  Let $\eps, \alpha > 0$, $p \in (0, 1)$, and $n \in \N$. Let $\Gamma$ be a
  graph on $n$ vertices and $G \in \calG(\Gamma, n, 2\alpha, p)$. We say that a
  subset $W \subseteq V(G)$ is $(\alpha,\eps,p)$-{\em good} with respect to
  $\Gamma$ and $G$ if the following properties hold:
  \begin{enumerate}[leftmargin=3em,label=(G\arabic*)]
    \item\label{p:unif-density} For every two disjoint subsets $X \subseteq W$,
      $Y \subseteq V(\Gamma)$ of sizes $|X|, |Y| \geq \eps^{-3}\log n/p$ we have
      \[
        e_\Gamma(X, Y) = (1 \pm \eps)|X||Y|p.
      \]
    \item\label{p:unif-deg} For every $v \in V(\Gamma)$ we have $\deg_\Gamma(v,
      W) = (1 \pm \eps)|W|p$.
    \item\label{p:good-deg-con} For every $v \in V(G)$ we have $\deg_G(v, W) =
      (1 \pm \eps) \deg_G(v) \frac{|W|}{n}$.
    \item\label{p:good-deg} For every $v \in V(G)$ we have $\deg_G(v, W) \ge
      (2/3 + \alpha)|W|p$.
    \item\label{p:good-codeg} For every $\{u, v\} \in E(G)$ we have $|N_G(u, W)
      \cap N_G(v, W)| \ge \alpha|W|p^2$.
    \item\label{p:unif-triangles} For all subsets $W' \subseteq W$ of size $|W'|
      \geq \eps|W|$ and all $\calP \subseteq \binom{V(G) \setminus W}{2}$ of
      size $|\calP| \ge \eps^{-10} \log n/p^2$ such that no vertex of $G$
      appears in more than $1 + 1/p$ pairs from $\calP$, we have
      \[
        \sum_{\{u, v\} \in \calP} |N_G(u, W') \cap N_G(v, W')| \leq (1 +
        \eps)|\calP||W'|p^2.
      \]
    \item\label{p:good-edge-expansion} For every set of edges $\calP \subseteq
      E(G)$ avoiding $W$ such that no vertex of $G$ appears in more than $1 +
      \eps/p$ edges from $\calP$ and $|\calP| \le 1 + \eps/p^2$, we have
      \[
        \Big| \bigcup_{\{u, v\}\in \calP} \big( N_G(u, W) \cap N_G(v, W) \big)
        \Big| \geq \alpha|\calP||W|p^2.
      \]
  \end{enumerate}
\end{definition}

In order to be precise, whenever speaking about $(\alpha,\eps,p)$-good sets, one
would always need to specify graphs $\Gamma$ and $G$ as well. However, for a
cleaner exposition, we omit it as these two graphs are always clear from the
context and almost always are the random graph $\Gamma \sim \Gnp$ and its
subgraph $G \in \calG(\Gamma, \alpha, \eps, p)$. The following proposition shows
that w.h.p.\ an overwhelming majority of sets $W$ are $(\alpha/2,\eps,p)$-good
with respect to $\Gamma \sim \Gnp$ and its subgraph $G$ which belongs to the
class $\calG(\Gamma, \alpha, \eps, p)$, for a certain choice of parameters.
Namely, a random choice of $W$ is typically good.

\begin{proposition}\label{prop:random-set-is-good}
  For every $\alpha > 0$ there exists a positive constant $\eps := \eps(\alpha)$
  such that the random graph $\Gamma \sim \Gnp$ w.h.p.\ satisfies the following
  for every $G \in \calG(\Gamma, n, \alpha, p)$.

  Let $s \geq \eps^{-10}\log n/p^2$ be an integer. Then there are at least
  $(1-n^{-5})\binom{n}{s}$ subsets $W \subseteq V(G)$ such that $W$ is
  $(\alpha/2,\eps,p)$-good.
\end{proposition}
\begin{proof}
  Choose $\eps > 0$ sufficiently small so that the arguments below follow
  through. One can easily show with Chernoff's inequality and the union bound
  that w.h.p.\ $\Gamma \sim \Gnp$ is such that
  \begin{itemize}
    \item {\em (density)} $e_\Gamma(X, Y) = (1 \pm \eps)|X||Y|p$, for every $X,
      Y \subseteq V(\Gamma)$ with $|X|, |Y| \geq \eps^{-3}\log n/p$;
    \item {\em (degree)} $\deg_\Gamma(v) = (1 \pm \eps/2)np$, for every $v \in
      V(\Gamma)$; and
    \item {\em (codegree)} $|N_\Gamma(u) \cap N_\Gamma(v)| \leq (1+\eps/2)np^2$,
      for every $u, v \in V(\Gamma)$.
  \end{itemize}
  Furthermore, with high probability, $\Gamma$ satisfies both the conclusion of
  Lemma~\ref{lemma:triangles} and that of Lemma~\ref{lemma:edge-expansion}.
  Condition on these five events from now on.

  Let $G \subseteq \Gamma$ be a member of $\calG(\Gamma, n, \alpha, p)$ and let
  $H := \Gamma - G$. We choose a set $W$ uniformly at random among all subsets
  of size $s$ and aim to show that with probability at least $1 - o(n^{-5})$
  such a set satisfies all \ref{p:unif-density}--\ref{p:good-edge-expansion},
  which would imply the desired statement. Property~\ref{p:unif-density} follows
  directly from the density event we conditioned on above.
  Properties~\ref{p:unif-deg}--\ref{p:good-codeg} follow from the degree event
  we conditioned on above, the definition of $\calG(\Gamma, n, \alpha, p)$, and
  then the fact that $W$ is chosen u.a.r., Chernoff's inequality, and a simple
  union bound. Moreover, \ref{p:unif-triangles} holds for $W$ with probability
  at least $1 - n^{-6}$ by the conclusion of Lemma~\ref{lemma:triangles}, since
  $G \subseteq \Gamma$.

  Finally, let us look at \ref{p:good-edge-expansion}. By the conclusion of
  Lemma~\ref{lemma:edge-expansion}, with probability at least $1 - n^{-6}$, $W$
  is one of the subsets for which the following holds: for any subset of edges
  $\calP \subseteq E(G)$ avoiding $W$ such that $|\calP| \le 1 + \eps/p^2$ and
  no vertex of $G$ appearing in more than $1 + \eps/p$ edges from $\calP$, we
  have
  \begin{equation}\label{eq:random-set-is-good-eq1}
    \Big| \bigcup_{\{u, v\} \in \calP} \big( N_\Gamma(u, W) \cap N_\Gamma(v, W)
    \big) \Big| \ge (1 - 5\eps)|\calP||W|p^2.
  \end{equation}
  From the codegree event we conditioned on above and the definition of the
  class $\calG(\Gamma, n, \alpha, p)$ we further get that
  \[
    |N_H(u) \cap N_H(v)| \leq (1+\eps/2-\alpha)np^2 \leq (1-3\alpha/4)np^2,
  \]
  for every edge $\{u, v\} \in E(G)$. Moreover, as $W$ is chosen uniformly at
  random, by Chernoff's inequality and the union bound we also have that with
  probability at least $1 - o(n^{-5})$, every $\{u, v\} \in E(G)$ satisfies
  \begin{equation}\label{eq:random-set-is-good-eq2}
    |N_H(u, W) \cap N_H(v, W)| \le (1-3\alpha/4+\eps)|W|p^2.
  \end{equation}
  By combining \eqref{eq:random-set-is-good-eq1} and
  \eqref{eq:random-set-is-good-eq2} we get
  {\small
    \begin{align*}
      \Big| \bigcup_{\{u, v\} \in \calP} \big( N_G(u, W) \cap N_G(v, W) \big)
      \Big| & \geq \Big| \bigcup_{\{u, v\} \in \calP} \big( N_\Gamma(u, W) \cap
      N_\Gamma(v, W) \big) \Big| - \sum_{\{u, v\} \in \calP} \big| N_H(u, W)
      \cap N_H(v, W) \big| \\
      & \geq (1-5\eps)|\calP||W|p^2 - |\calP|(1-3\alpha/4+\eps)|W|p^2 \\
      & \geq (\alpha/2)|\calP||W|p^2,
    \end{align*}
  }
  where the last inequality follows for small enough $\eps > 0$. This concludes
  the proof.
\end{proof}

\subsection{Edge expansion and triangles}

In this section we provide some expansion tools that we make use of in our
proof. Before we state them, we give some motivational background. A standard
approach for showing that two vertices $a$ and $b$ are connected by a path of
length, say, $\ell$, is to inductively prove a lower bound on the number of
vertices that can be reached by paths of length $i$, starting from each $a$ and
$b$. To prove such a bound one usually relies on expansion properties of
vertices in certain sets. In our case, we do not want to find paths, but {\em
square paths} (see~Section~\ref{sec:graph-definitions} for details), where each
new vertex is given by a triangle lying on a previous edge. In particular, we
build such paths by starting from an edge and determining how many edges
(instead of vertices) we can reach starting from this edge. Correspondingly, we
need expansion properties of edges instead of vertices. The goal of this section
is to provide such expansion properties.

More precisely, we are trying to understand the following setup. Suppose we are
given three disjoint subsets of vertices $W_1, W_2, W_3$ of a graph $G$. Assume
further that $F_{12} \subseteq E_G(W_1, W_2)$ is some set of edges from $G$
between $W_1$ and $W_2$. What we are interested in is the set of edges
\[
  F_{23} := \big\{ \{w_2, w_3\} \in E_G(W_2, W_3) : \exists w_1 \in W_1
  \text{ s.t.\ } \{w_1, w_2\} \in F_{12} \text{ and } \{w_1,w_3\} \in E(G)
  \big\}
\]
that `extend' an edge from $F_{12}$ to the set $W_3$ via a triangle. Namely, we
are aiming at providing some bound on $|F_{23}|$ in terms of $|F_{12}|$.

\begin{lemma}\label{lem:everything}
  For every $\alpha > 0$ there exists a positive constant $\eps := \eps(\alpha)$
  such that for sufficiently large $n$ and all $p := p(n) \in (0, 1)$, every
  graph $\Gamma$ on $n$ vertices and $G \in \calG(\Gamma, n, 2\alpha, p)$
  satisfy the following.

  Let $\tilde n \geq \eps^{-22}\log^2 n/p^2$ be an integer. Let $W_1, W_2, W_3
  \subseteq V(G)$ be three disjoint $(\alpha, \eps, p)$-good sets of size
  $\tilde n$ each, let $X \subseteq W_3$ be of size $|X| \leq \eps^4 \tilde n$,
  and $F_{12} \subseteq E_G(W_1, W_2)$ be a subset of edges. Then the following
  statements hold, where $U \subseteq W_2$ is the set of vertices incident to
  the edges in $F_{12}$, and $F_{23}$ is as defined above.
  \begin{enumerate}[(1)]
    \item\label{lemma:star-matching} If $|U| \geq |X|/ \log n$ and
      $\deg_{F_{12}}(v) \leq \eps/p$ for all $v \in W_1 \cup W_2$, then there
      exists a subset $U' \subseteq U$ of size $|U'| = (1 - \eps) \min\{|U|,
      \eps/p^2\}$ and an $(\alpha\tilde np^2/2)$-star-matching in $F_{23}$
      saturating $U'$ and avoiding $X$ in $W_3$.

    \item\label{lemma:small-expand} If $|F_{12}| \geq \eps^{-17}\tilde n\log^2
      n$ and $\deg_{F_{12}}(v, W_1) \leq \frac{\eps^{-4}\log n}{p}$ for all $v
      \in U$, then $e_{F_{23}}(U, W_3 \setminus X) \geq \eps^{-4}|F_{12}|$.

    \item\label{lemma:medium-dont-shrink} If $|F_{12}| \geq \eps^{-5}\tilde
      n\log n$ and $\deg_{F_{12}}(v, W_1) \geq \frac{\eps^{-4}\log n}{p}$ for
      all $v \in U$, then $e_{F_{23}}(U, W_3 \setminus X) \geq \frac{\alpha
      \tilde np}{4}|U|$.

    \item\label{lemma:medium-expand} If $|F_{12}| \geq \frac{\eps^{-5}\log n}{p}
      \tilde n$ and $\frac{\eps^{-4}\log n}{p} \leq \deg_{F_{12}}(v, W_1) \le
      \frac{\tilde np}{3}$ for all $v \in U$, then $e_{F_{23}}(U, W_3 \setminus
      X) \geq (1 + \alpha/4)|F_{12}|$.

    \item\label{lemma:large-to-all} If $|F_{12}| \geq \frac{\eps^{-10}\log n}{p}
      \tilde n$ and $\deg_{F_{12}}(v, W_1) \geq \frac{\tilde np}{3}$ for all $v
      \in U$, then $e_{F_{23}}(U, W_3 \setminus X) \ge (1 - \eps^2) e_G(U, W_3)$
      and there exists a subset $L \subseteq U$ of size $|L| \geq (1 -
      3\eps^3)|U|$, such that for every $u \in L$ we have $\deg_{F_{23}}(u, W_3
      \setminus X) \geq (1 - 2\eps^3) \deg_G(u, W_3)$.

    \item\label{lemma:large-stay-large} If $|U| \geq \frac{2}{3} \tilde n$ and
      $\deg_{F_{12}}(v, W_1) \geq \frac{\tilde np}{3}$ for all $v \in U$, then
      there exists $L \subseteq W_3 \setminus X$ of size $|L| \ge (1 -
      \eps)\tilde n$, such that for every $u \in L$ we have $\deg_{F_{23}}(u,
      W_2) \geq (1/3 + \alpha/2)\tilde np$.

    \item\label{lemma:sparse-expand-or} If $|F_{12}| \geq \eps^{-18}\tilde
      n\log^2 n$ and $e_{F_{23}}(W_2, W_3 \setminus X) < \eps^{-3}|F_{12}|$,
      then there exists a subset $L \subseteq W_2$ such that for every $v \in L$
      we have $\deg_{F_{12}}(v, W_1) \geq \eps^{-4}\log n/p$ and
      $e_{F_{12}}(W_1, L) \geq (1 - \eps)|F_{12}|$.

    \item\label{lemma:dense-expand-or} If $|F_{12}| \geq \eps \tilde n^2 p$ and
      $e_{F_{23}}(W_2, W_3 \setminus X) < (1 + \mu\alpha/8)|F_{12}|$, then there
      exists a subset $L \subseteq W_2$ such that for every $v \in L$ we have
      $\deg_{F_{12}}(v, W_1) > \tilde np/3$ and $e_{F_{12}}(W_1, L) \geq (1 -
      \mu)|F_{12}|$, for every $\mu \in (32\eps/\alpha, 1)$.

    \item\label{lemma:dense-doesnt-drop} If $|F_{12}| \geq \eps \tilde n^2 p$,
      then $e_{F_{23}}(W_2, W_3 \setminus X) \geq (1 - \sqrt\eps)|F_{12}|$.
  \end{enumerate}
\end{lemma}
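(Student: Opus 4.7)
The plan is to prove item~\ref{lemma:dense-doesnt-drop} by contradiction, combining the structural dichotomy of item~\ref{lemma:dense-expand-or} with the near-total expansion statement of item~\ref{lemma:large-to-all} and the degree-uniformity property~\ref{p:good-deg-con}. Heuristically, if $e_{F_{23}}$ drops below $(1-\sqrt\eps)|F_{12}|$, then item~\ref{lemma:dense-expand-or} forces $|F_{12}|$ to concentrate on $W_2$-vertices of high $F_{12}$-degree, item~\ref{lemma:large-to-all} then transfers this concentrated part almost fully into $F_{23}$, and~\ref{p:good-deg-con} ensures that the resulting contribution to $e_{F_{23}}(W_2, W_3\setminus X)$ is at least a $(1-O(\eps/\alpha))$-fraction of $|F_{12}|$, a contradiction once $\eps$ is small enough in terms of $\alpha$.

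In detail, suppose $e_{F_{23}}(W_2, W_3 \setminus X) < (1 - \sqrt\eps)|F_{12}|$ and pick some $\mu \in (32\eps/\alpha,\,\sqrt\eps/2)$; this interval is non-empty provided $\eps \le (\alpha/100)^2$. Since $1 - \sqrt\eps < 1 + \mu\alpha/8$ trivially, item~\ref{lemma:dense-expand-or} produces $L \subseteq W_2$ with $\deg_{F_{12}}(v, W_1) > \tilde n p/3$ for every $v \in L$ and $e_{F_{12}}(W_1, L) \ge (1 - \mu)|F_{12}|$. Set $F_{12}^L := F_{12} \cap E_G(W_1, L)$ and let $F_{23}^L \subseteq F_{23}$ denote the corresponding set of extended edges. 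The size bound $|F_{12}^L| \ge (1-\mu)\eps\tilde n^2 p$ together with $\tilde n p^2 \ge \eps^{-22}\log^2 n$ yields $|F_{12}^L| \ge \eps^{-10}\tilde n\log n/p$, while the high-degree condition on $L$ is inherited verbatim, so item~\ref{lemma:large-to-all} applies to $F_{12}^L$ and gives
\[
  e_{F_{23}^L}(L, W_3 \setminus X) \;\ge\; (1 - \eps^2)\, e_G(L, W_3).
\]

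It remains to lower bound $e_G(L, W_3)$ in terms of $|F_{12}|$. Because both $W_1$ and $W_3$ are $(\alpha, \eps, p)$-good, property~\ref{p:good-deg-con} yields, for every $v \in L \subseteq W_2$,
$\deg_G(v, W_3) \ge \tfrac{1-\eps}{1+\eps}\deg_G(v, W_1) \ge (1 - 3\eps)\deg_{F_{12}^L}(v, W_1)$. Summing over $L$ and using $F_{23}^L \subseteq F_{23}$,
\[
  e_{F_{23}}(W_2, W_3 \setminus X) \;\ge\; e_{F_{23}^L}(L, W_3 \setminus X) \;\ge\; (1 - \eps^2)(1 - 3\eps)(1 - \mu)|F_{12}|.
\]
For $\mu < \sqrt\eps/2$ and $\eps$ small enough in terms of $\alpha$, the right-hand side exceeds $(1 - \sqrt\eps)|F_{12}|$, contradicting the assumption. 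The main obstacle is quantitative bookkeeping: the $\sqrt\eps$ loss in the conclusion is forced precisely because item~\ref{lemma:dense-expand-or} only allows $\mu > 32\eps/\alpha$, so this contradiction argument cannot be sharpened to $O(\eps)$ loss without first strengthening that item.
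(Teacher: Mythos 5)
Your argument proves only item~(\ref{lemma:dense-doesnt-drop}), taking items~(\ref{lemma:large-to-all}) and~(\ref{lemma:dense-expand-or}) as given, and for that item it is correct and follows essentially the same route as the paper: apply~(\ref{lemma:dense-expand-or}) to concentrate $F_{12}$ on the high-degree set $L$, feed $E_{F_{12}}(W_1,L)$ into~(\ref{lemma:large-to-all}), and convert $e_G(L,W_3)$ back into $|F_{12}|$ via~\ref{p:good-deg-con}. The only cosmetic differences are your contradiction framing (the paper argues directly, first disposing of the case $e_{F_{23}}(W_2,W_3\setminus X)\ge(1+4\eps)|F_{12}|$) and your treatment of $\mu$ as a free parameter in $(32\eps/\alpha,\sqrt\eps/2)$ rather than fixing $\mu=32\eps/\alpha$ and bounding it by $\eps^{2/3}$.
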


Let $\eps := \eps(\alpha) > 0$ be a small enough constant such that all the
arguments follow through. We prove the statements one by one.

\begin{proof}[Proof of~\ref{lemma:star-matching} in Lemma~\ref{lem:everything}]
  Fix an arbitrary set $\tilde U \subseteq U$ of size $|\tilde U| = \min\{|U|,
  \eps/p^2\}$. Every subset $A \subseteq \tilde U$, by using property
  \ref{p:good-edge-expansion} applied for a single edge from $F_{12}$ incident
  to each $a \in A$ (as $\calP$), satisfies
  \[
    |N_{F_{23}}(A)| \ge \alpha |A| \tilde np^2.
  \]
  Lemma~\ref{lem:star-hall} thus implies there exists an $(\alpha \tilde
  np^2)$-star-matching $M$ in $F_{23}$ saturating $\tilde U$. Let $\tilde U_X$
  be the largest subset of $\tilde U$ such that for each $v \in \tilde U_X$ at
  least half of its edges in $M$ are incident to vertices in $X$. It must be
  that $|\tilde U_X| \le \eps |\tilde U|$ as otherwise we have
  \begin{align*}
    |X| & \geq |\tilde U_X| \cdot (\alpha/2)\tilde np^2 \geq \eps \min\{ |U|,
    \eps/p^2 \} \cdot (\alpha/2) \tilde np^2 \\
    &\geq \min\{ \eps^{-20} |U| \log^2 n, \eps^3 \tilde n \} \ge \min\{ |X|
    \log n, \eps^3 \tilde n \},
  \end{align*}
  which contradicts our assumptions on $|X|$. Thus, by setting $U' := \tilde U
  \setminus \tilde U_X$, we have for each vertex from $U'$ that half of its
  edges from the matching $M$ avoid $X$ and that suffices.
\end{proof}

\begin{proof}[Proof of~\ref{lemma:small-expand} in Lemma~\ref{lem:everything}]
  Let $F_{12}' \subseteq F_{12}$ be a largest subset of the edges from $F_{12}$
  obtained by keeping at most $\eps/p$ edges incident to every vertex in $U$.
  Note that
  \begin{equation}\label{eq:small-expand-eq1}
    |F_{12}'| \geq |F_{12}| \cdot \frac{\eps/p}{\eps^{-4}\log n/p} \geq
    \frac{\eps^5 |F_{12}|}{\log n} \geq \eps^{-12} \tilde n \log n.
  \end{equation}
  We define a sequence $J_1, \dotsc, J_t$ of disjoint subsets of $F_{12}'$ as
  follows. Let $J_1$ be a largest subset of $F_{12}'$ such that no vertex of
  $W_1$ is incident to more than $\eps/p$ edges from $J_1$. Assume we have
  defined $J_1, \dotsc, J_i$ for some $i \ge 1$. We then define $J_{i + 1}$ as a
  largest subset of $F_{12}' \setminus (J_1 \cup \dotsb \cup J_i)$ such that no
  vertex from $W_1$ is incident to more than $\eps/p$ edges from $J_{i + 1}$. We
  set $t$ to be the smallest integer such that $|J_1| + \dotsb + |J_t| \geq
  |F_{12}'|/2$. Note that from \ref{p:unif-deg} it follows that for every $i \in
  [t]$
  \[
    |J_i| \geq \frac{|F_{12}'|}{2} \cdot \frac{\eps/p}{(1+\eps)\tilde np} \geq
    \frac{\eps|F_{12}'|}{4\tilde n p^2} \osref{\eqref{eq:small-expand-eq1}}\geq
    \frac{\eps^{-10}\log n}{p^2}.
  \]
  By using property \ref{p:unif-triangles} for $J_i$ (as $\calP$), $X$ (as
  $W'$), and the previous inequality, we get that for every $i \in [t]$ it holds
  that $e_{F_{23}(J_i)}(U, X) \leq 2\eps^4|J_i| \tilde np^2$, where
  $F_{23}(J_i)$ denotes the edges in $F_{23}$ obtained by extending {\em only}
  the edges belonging to $J_i$. This further shows that
  \begin{equation}\label{eq:small-expand-eq2}
    e_{F_{23}(J^\star)}(U, X) \leq 2\eps^4 |J^\star| \tilde np^2,
  \end{equation}
  where $J^\star := J_1 \cup \dotsb \cup J_t$.

  Next, consider an arbitrary $v \in U$. As every such $v$ is incident to at
  most $\eps/p$ edges in $J^\star$ by definition, we have by
  \ref{p:good-edge-expansion} applied for edges of $J^\star$ incident to $v$ (as
  $\calP$) that $\deg_{F_{23}}(v, W_3) \ge \alpha \deg_{J^\star}(v, W_1) \tilde
  np^2$. Thus, it follows that
  \begin{equation}\label{eq:small-expand-eq3}
    e_{F_{23}(J^\star)}(U, W_3) \ge \alpha |J^\star| \tilde np^2.
  \end{equation}
  From the previous inequality and the fact that $|J^\star| \geq |F_{12}'|/2$,
  we obtain
  \begin{align*}
    e_{F_{23}}(U, W_3 \setminus X) &\geq e_{F_{23}(J^\star)}(U, W_3) -
    e_{F_{23}(J^\star)}(U, X)
    \osref{\eqref{eq:small-expand-eq2},\eqref{eq:small-expand-eq3}}\geq \alpha
    |J^\star|\tilde np^2 - 2\eps^4 |J^\star| \tilde np^2 \\
    & \ge (\alpha/2) |J^\star| \tilde np^2 \geq (\alpha/4)|F_{12}'| \tilde np^2
    \osref{\eqref{eq:small-expand-eq1}}\geq \frac{(\eps^5 \alpha/4) \tilde
    np^2}{\log n} |F_{12}| \ge \eps^{-4}|F_{12}|,
  \end{align*}
  where the last inequality follows from the assumption on $\tilde n$.
\end{proof}

\begin{proof}[Proof of~\ref{lemma:medium-dont-shrink} in Lemma~\ref{lem:everything}]
  From \ref{p:unif-deg} we have that $|U| \geq |F_{12}|/((1 + \eps) \tilde np)
  \geq \eps^{-4} \log n/p$. Thus, by \ref{p:unif-density} and the fact that $|X|
  \le \eps^4 \tilde n$, we get
  \begin{equation}\label{eq:medium-dont-shrink-eq1}
    e_G(U, X) \le 2\eps^4 |U| \tilde np.
  \end{equation}
  Consider an arbitrary $v \in U$ and let $N_1 := N_{F_{12}}(v, W_1)$ and $N_3
  := N_{F_{23}}(v, W_3)$. From the fact that $|N_1| \geq \eps^{-3}\log n/p$,
  \ref{p:good-codeg}, and \ref{p:unif-density} applied for $N_1$ (as $X$)
  and $N_3$ (as $Y$), we know
  \[
    |N_1| \cdot \alpha \tilde np^2 \le e_G(N_1, N_3) \le
    (1+\eps)|N_1|\max\{\eps^{-3}\log n/p, |N_3|\}p.
  \]
  By the assumption on $\tilde n$ we cannot have $|N_1|\alpha\tilde np^2 \le
  (1+\eps)|N_1|\eps^{-3}\log n$, hence it must be $|N_3| \ge (\alpha/2)\tilde
  np$. Together with \eqref{eq:medium-dont-shrink-eq1} we get
  \[
    e_{F_{23}}(U, W_3 \setminus X) = e_{F_{23}}(U, W_3) - e_{F_{23}}(U, X) \ge
    (\alpha/2)|U| \tilde np - 2\eps^{4} |U| \tilde np \ge (\alpha/4)|U| \tilde
    np,
  \]
  completing the proof.
\end{proof}

\begin{proof}[Proof of~\ref{lemma:medium-expand} in Lemma~\ref{lem:everything}]
  Let $U' \subseteq U$ be a subset of vertices in $U$ defined as
  \[
    U' := \{ v \in U : \deg_{F_{23}}(v, W_3) < (1/3 + \alpha/2)\tilde np \}.
  \]
  Assume for the moment that we can show $e_{F_{12}}(U', W_1) \le \eps
  |F_{12}|$. Then we would have
  \begin{equation}\label{eq:large-dont-shrink-eq1}
    (1 - \eps)|F_{12}| \le e_{F_{12}}(U \setminus U', W_1) \le |U \setminus U'|
    \cdot \frac{\tilde np}{3},
  \end{equation}
  where the upper bound follows from the assumption on $\deg_{F_{12}}(v, W_1)$.
  From this we get that $|U\setminus U'| \ge \eps^{-4}\log n / p$ (with room to
  spare). Therefore, by the definition of $U'$ and \ref{p:unif-density} applied
  for $X$ and $U \setminus U'$ (as $Y$) we obtain
  \begin{align*}
    e_{F_{23}}(U, W_3 \setminus X)
    &\ge e_{F_{23}}(U \setminus U', W_3) - e_{F_{23}}(U \setminus U', X) \\
    &\geq |U \setminus U'|\big((1/3+\alpha/2)\tilde np - (1+\eps)|X|p\big) \geq
    |U \setminus U'|(1/3 + \alpha/2 - 2\eps^4)\tilde np,
  \end{align*}
  where the last step follows from $|X| \leq \eps^4\tilde n$. This, together
  with \eqref{eq:large-dont-shrink-eq1} shows
  \[
    e_{F_{23}}(U, W_3 \setminus X) \geq |U \setminus U'| \cdot (1/3 +
    \alpha/4)\tilde np \geq 3(1 - \eps)|F_{12}| (1/3 + \alpha/4) \geq (1 +
    \alpha/4)|F_{12}|,
  \]
  as desired. It remains to prove $e_{F_{12}}(U', W_1) \le \eps |F_{12}|$.
  Towards a contradiction assume this is not the case.

  For every $v \in U'$ let $L_v := N_{F_{12}}(v, W_1)$ and $R_v := N_{G -
  F_{23}}(v, W_3)$. Observe that by \ref{p:good-deg} and the definition of $U'$
  we have
  \begin{equation}\label{eq:large-dont-shrink-eq1-2}
    |R_v| \ge (1/3 + \alpha/2)\tilde np.
  \end{equation}
  Moreover, the definition of $R_v$ gives $e_G(L_v, R_v) = 0$. On the other
  hand, property \ref{p:unif-density}, as $|L_v|, |R_v| \geq \eps^{-4}\log n/p$,
  states $e_{\Gamma}(L_v, R_v) = (1 \pm \eps)|L_v||R_v|p$. Let
  \begin{equation}\label{eq:large-dont-shrink-eq1-3}
    L_v' := \{ w \in L_v : \deg_{\Gamma}(w, R_v) \ge (1 - \eps)|R_v|p \}.
  \end{equation}
  It immediately follows from \ref{p:unif-density} that $|L_v'| \ge (1-\eps)
  |L_v|$. Hence, the assumption $e_{F_{12}}(U', W_1) \ge \eps |F_{12}|$ implies
  \[
    \sum_{v \in U'} |L_v'| \ge \sum_{v \in U'} (1 - \eps)|L_v| = (1-\eps)
    e_{F_{12}}(U', W_1) \geq (1 - \eps) \cdot \eps|F_{12}| \geq \frac{\eps^{-3}
    \log n}{p} \cdot \tilde n.
  \]
  Consequently, by averaging over vertices in $W_1$, there is a vertex $w \in
  W_1$ and a set $U_w \subseteq U'$ such that $|U_w| \geq \eps^{-3}\log n/p$ and
  $w \in L_v'$ for all $v \in U_w$.

  Set $T := \bigcup_{v \in U_w} N_\Gamma(w, R_v)$. Note that all vertices in $T$
  are connected to $w$ in $\Gamma$, however none is connected to $w$ in $G$ due
  to the way $R_v$'s are defined. For all $v \in U_w$ we have
  \[
    \deg_G(v, T) \ge |R_v \cap T| \ge |N_\Gamma(w, R_v)|
    \osref{\eqref{eq:large-dont-shrink-eq1-3}} \ge (1-\eps)|R_v| p
    \osref{\eqref{eq:large-dont-shrink-eq1-2}} \ge (1/3 + \alpha/4)\tilde np^2.
  \]
  Therefore,
  \[
    e_\Gamma(U_w, T) \ge e_G(U_w, T) \ge |U_w| \cdot (1/3 + \alpha/4)\tilde
    np^2.
  \]
  Since $|U_w| \geq \eps^{-3}\log n/p$, we get from \ref{p:unif-density} applied
  for $U_w$ (as $X$) and $T$ (as $Y$) that
  \[
    |T| \ge \frac{(1/3+\alpha/4)}{1+\eps}\tilde np \ge (1/3+\alpha/8)\tilde np.
  \]
  Lastly, as $T \subseteq N_{\Gamma-G}(w, W_3)$, \ref{p:unif-deg} and
  \ref{p:good-deg} provide the desired contradiction.
\end{proof}

\begin{proof}[Proof of~\ref{lemma:large-to-all} in Lemma~\ref{lem:everything}]
  From \ref{p:unif-deg} we get that $|U| \geq |F_{12}|/((1 + \eps)\tilde np)
  \geq \eps^{-9}\log n/p^2$. Let $U' \subseteq U$ be defined as
  \begin{equation}\label{eq:def_u_prim}
    U' := \{ v \in U : \deg_{F_{23}}(v, W_3) < (1 - \eps^3) \deg_G(v, W_3) \}.
  \end{equation}
  Assume for the moment that we can show $e_G(U', W_3) \le \eps^3 \cdot e_G(U,
  W_3)$. Then
  \[
    |U'| \cdot (2/3 + \alpha/2)\tilde np \osref{\ref{p:good-deg}}\le e_G(U',W_3)
    \le \eps^3 \cdot e_G(U, W_3) \osref{\ref{p:unif-density}}\le \eps^3 \cdot
    (1+\eps)|U|\tilde np.
  \]
  This implies $|U'| \leq 2\eps^3 |U|$. Since $|X| \leq \eps^4 \tilde n$, we
  deduce from \ref{p:unif-density} that there can be at most $\eps^{-3}\log n/p
  \leq \eps^3 |U|$ vertices in $U$ with degree into $X$ larger than $2\eps^4
  \tilde np$. Let $L$ be the vertices in $U \setminus U'$ that do not have this
  property. Therefore, $|L| \geq |U| - |U'| - \eps^3 |U| \geq (1 - 3\eps^3)|U|$
  and for all $v \in L$
  \[
    \deg_{F_{23}}(v, W_3 \setminus X) \osref{\eqref{eq:def_u_prim}} \geq (1 -
    \eps^3) \deg_G(v, W_3) - 2\eps^4 \tilde np \geq (1 - 2\eps^3) \deg_G(v,
    W_3).
  \]
  Moreover, by definition of $U'$ and \ref{p:unif-density} applied for $X$ and
  $U \setminus U'$ (as $Y$), since $|X| \leq \eps^4 \tilde n$, we obtain
  \begin{align*}
    e_{F_{23}}(U, W_3 \setminus X) &\geq e_{F_{23}}(U \setminus U', W_3) -
    e_{F_{23}}(U \setminus U', X) \\
    &\geq \Big( \sum_{v \in U \setminus U'} (1 - \eps^3) \deg_{G}(v, W_3) \Big)
    - 2\eps^4 |U \setminus U'| \tilde np \osref{\ref{p:good-deg}}\geq \sum_{v
    \in U \setminus U'} (1 - 2\eps^3) \deg_{G}(v, W_3).
  \end{align*}
  This together with the assumption $e_G(U', W_3) \leq \eps^3 \cdot e_G(U, W_3)$
  further shows
  \begin{align*}
    e_{F_{23}}(U, W_3 \setminus X) &\geq \sum_{v \in U \setminus U'}
    (1-2\eps^3)\deg_G(v, W_3) \geq (1-2\eps^3)e_G(U \setminus U', W_3) \\
    &\geq (1-2\eps^3)(1-\eps^3)e_G(U, W_3) \geq (1-\eps^2)e_G(U, W_3),
  \end{align*}
  as desired. It remains to prove $e_G(U', W_3) \le \eps^3 \cdot e_G(U, W_3)$.
  Towards a contradiction assume this is not the case.

  For every $v \in U'$ let $L_v := N_{F_{12}}(v, W_1)$ and $R_v := N_{G -
  F_{23}}(v, W_3)$. Observe that by \ref{p:good-deg} and the definition of $U'$
  we have $|R_v| \ge \eps^3 \cdot \deg_G(v, W_3) \geq (2\eps^3/3)\tilde np$.
  Moreover, the definition of $R_v$ gives $e_G(L_v, R_v) = 0$. On the other
  hand, property \ref{p:unif-density}, as $|L_v|, |R_v| \geq \eps^{-4}\log n/p$,
  states $e_{\Gamma}(L_v, R_v) = (1 \pm \eps)|L_v||R_v|p$. Let
  \begin{equation}\label{eq:rv_def}
    R_v' := \{ w \in R_v : \deg_{\Gamma}(w, L_v) \ge (1-\eps)|L_v|p \}.
  \end{equation}
  It immediately follows from \ref{p:unif-density} that $|R_v'| \ge (1 -
  \eps)|R_v|$. Hence, the assumption $e_G(U', W_3) > \eps^3 \cdot e_G(U, W_3)$
  implies
  \[
    \sum_{v \in U'} |R_v'| \ge \sum_{v \in U'} (1 - \eps)|R_v| \geq \sum_{v \in
    U'} (1 - \eps) \cdot \eps^3 \cdot \deg_G(v, W_3) \geq \frac{\eps^3}{2}
    e_G(U', W_3) > \frac{\eps^6}{2} e_G(U, W_3).
  \]
  Next, using \ref{p:good-deg-con}, we know that $(1 + \eps) e_G(U, W_3) \geq (1
  - \eps) e_G(U, W_1)$ and thus
  \[
    \sum_{v \in U'} |R_v'| \geq \frac{\eps^6}{4} e_G(U, W_1) \geq
    \frac{\eps^{6}}{4} |F_{12}| \geq \frac{\eps^{-3}\log n}{p} \cdot \tilde n.
  \]
  Consequently, by averaging over vertices in $W_3$, there is a vertex $w \in
  W_3$ and a set $U_w \subseteq U'$ such that $|U_w| \geq \eps^{-3} \log n/p$
  and $w \in R_v'$ for all $v \in U_w$.

  Set $T := \bigcup_{v \in U_w} N_\Gamma(w, L_v)$. Note that all vertices in $T$
  are connected to $w$ in $\Gamma$, however none is connected to $w$ in $G$. For
  all $v \in U_w'$ we now have
  \[
    \deg_G(v, T) \geq |L_v \cap T| \ge |N_\Gamma(w, L_v)|
    \osref{\eqref{eq:rv_def}} \geq (1-\eps)|L_v|p \geq (1/3-\eps/3)\tilde np^2,
  \]
  where the last inequality follows from the assumption $\deg_{F_{12}}(v, W_1)
  \ge \tilde np/3$. Therefore,
  \[
    e_\Gamma(U_w, T) \geq e_G(U_w, T) \ge |U_w| \cdot (1/3 - \eps/3)\tilde np^2.
  \]
  Since $|U_w| \geq \eps^{-3}\log n/p$ we get from \ref{p:unif-density} applied
  for $U_w$ (as $X$) and $T$ (as $Y$) that
  \[
    |T| \ge \frac{1/3 - \eps/3}{1 + \eps} \tilde np \ge (1/3 - 2\eps)\tilde np.
  \]
  Lastly, as $T \subseteq N_{\Gamma - G}(w, W_1)$, \ref{p:unif-deg} and
  \ref{p:good-deg} provide the desired contradiction.
\end{proof}

\begin{proof}[Proof of~\ref{lemma:large-stay-large} in Lemma~\ref{lem:everything}]
  Note that $|F_{12}| \geq |U| \cdot \tilde np/3 \geq \eps^{-10} \tilde n\log
  n/p$. By part $\ref{lemma:large-to-all}$ of this lemma, we get
  \begin{equation}\label{eq:full-graph-eq1}
    e_{F_{23}}(U, W_3 \setminus X) \geq (1-\eps^2)e_G(U, W_3).
  \end{equation}
  Let
  \[
    L := \{ u \in W_3 \setminus X : \deg_{F_{23}}(u, W_2) \ge
    (1/3+\alpha/2)\tilde np \}.
  \]
  We aim to show that $|L| \geq (1 - \eps)\tilde n$. Assume towards a
  contradiction that this is not the case. Together with \ref{p:good-deg} and
  the assumption $|X|\le \eps^4 \tilde n$ this implies that there exists a set
  $Q \subseteq W_3 \setminus (X \cup L)$
  of size
  $|Q| \ge (\eps/2)\tilde n$
  such that for each $v \in Q$ we have
  \begin{equation}\label{eq:full-graph-eq2}
    \deg_{G - F_{23}}(v, W_2) \geq (1/3 + \alpha/2)\tilde np.
  \end{equation}
  Let $Q' := \{ v\in Q : \deg_{G - F_{23}}(v, U) \ge (\alpha/4)\tilde np\}$. If
  $|Q'| \ge (\eps/4)\tilde n$ then $e_{G - F_{23}}(U, Q') \ge
  (\eps\alpha/16)\tilde n^2p,$ which together with \ref{p:unif-density} applied
  for $U$ (as $X$) and $Q'$ (as $Y$) contradicts \eqref{eq:full-graph-eq1}.
  Therefore, $|Q'| \le (\eps/2)\tilde n$. From \ref{p:unif-density} we then get
  \begin{equation}\label{eq:full-graph-eq3}
    e_{G - F_{23}}(W_2 \setminus U, Q \setminus Q') \le (1 +
    \eps)\max\{\eps^{-3} \log n/p, |W_2 \setminus U|\} \cdot |Q\setminus Q'| p.
  \end{equation}
  On the other hand, from \eqref{eq:full-graph-eq2} and the definition of $Q$
  and $Q'$ we have
  \[
    e_{G - F_{23}}(W_2 \setminus U, Q\setminus Q') \ge (1/3 + \alpha/4)
    |Q\setminus Q'| \tilde np.
  \]
  Together with \eqref{eq:full-graph-eq3} we deduce
  \[
    \max\{\eps^{-3} \log n/p, |W_2 \setminus U|\} \ge
    \frac{1/3+\alpha/4}{1+\eps}\tilde n \ge (1/3+\alpha/8)\tilde n,
  \]
  which contradicts the assumption $|U| \ge (2/3)\tilde n$.
\end{proof}

\begin{proof}[Proof of~\ref{lemma:sparse-expand-or} in Lemma~\ref{lem:everything}]
  Let $S := \{ v \in U : \deg_{F_{12}}(v, W_1) < \eps^{-4} \log n/p \}$ and let
  $F_S \subseteq F_{12}$ be the subset of edges that are incident to $S$.
  Observe that if $|F_S| < \eps|F_{12}|$ we are done. So assume otherwise. By
  using part~$\ref{lemma:small-expand}$ of this lemma with $F_S$ (as $F_{12}$)
  we get
  \[
    e_{F_{23}}(W_2, W_3 \setminus X) \geq e_{F_{23}}(S, W_3 \setminus X) \ge
    \eps^{-4}|F_S| \ge \eps^{-3}|F_{12}|,
  \]
  contradicting the starting assumption $e_{F_{23}}(W_2, W_3 \setminus X) <
  \eps^{-3}|F_{12}|$.
\end{proof}

\begin{proof}[Proof of~\ref{lemma:dense-expand-or} in Lemma~\ref{lem:everything}]
  Define sets $S, M, L \subseteq W_2$ (which may be thought of as `small',
  `medium', and `large') as
  \begin{align*}
    S &= \{ v \in W_2 : 1 \le \deg_{F_{12}}(v, W_1) \le \eps^{-4}\log n/p \}, \\
    M &= \{ v \in W_2 : \eps^{-4}\log n/p \le \deg_{F_{12}}(v, W_1) \le \tilde
    np/3 \}, \\
    L &= \{ v \in W_2 : \tilde np/3 < \deg_{F_{12}}(v, W_1) \},
  \end{align*}
  and denote by $F_S$, $F_M$, and $F_L$ the subsets of edges in $F_{12}$
  incident to $S$, $M$, and $L$, respectively. Note that $F_S$, $F_M$, and $F_L$
  partition the set $F_{12}$. We claim that
  \begin{enumerate}[(i), font=\itshape]
    \item\label{8-small} if $|F_S| \ge \eps^4 |F_{12}|$, then $e_{F_{23}}(S, W_3
      \setminus X) \ge (1 + \alpha/4)|F_S|$,
    \item\label{8-medium} if $|F_M| \ge \eps^4 |F_{12}|$, then $e_{F_{23}}(M,
      W_3 \setminus X) \ge (1 + \alpha/4)|F_M|$, and
    \item\label{8-large} if $|F_L| \ge \eps^4 |F_{12}|$, then $e_{F_{23}}(L, W_3
      \setminus X) \ge (1 - 3\eps)|F_L|$.
  \end{enumerate}
  To see this assume first that $|F_S| \ge \eps^4|F_{12}|$. Then $|F_S| \ge
  \eps^5 \tilde n^2p \ge \eps^{-17}\tilde n \log^2 n$ and we can thus apply
  part~$\ref{lemma:small-expand}$ of this lemma to $S$ (as $U$) and $F_S$ (as
  $F_{12}$) to get
  \[
    e_{F_{23}}(S, W_3 \setminus X) \ge \eps^{-4}|F_S| \ge (1 + \alpha/4)|F_S|.
  \]
  Next, assume $|F_M| \ge \eps^4|F_{12}|$. Then again $|F_M| \ge \eps^{-5}\tilde
  n\log n/p$ and we can apply part~$\ref{lemma:medium-expand}$ of this lemma to
  $M$ (as $U$) and $F_M$ (as $F_{12}$) to get
  \[
    e_{F_{23}}(M, W_3 \setminus X) \ge (1 + \alpha/4)|F_M|.
  \]
  Lastly, assume $|F_L| \ge \eps^4|F_{12}|$. Then $|F_L| \ge \eps^{-10} \tilde n
  \log n/p$ and we can apply part~$\ref{lemma:large-to-all}$ of this lemma to
  $L$ (as $U$) and $F_L$ (as $F_{12}$) to get
  \begin{equation}\label{eq:dense-expand-or-eq1}
    e_{F_{23}}(L, W_3 \setminus X) \ge (1 - \eps^2) e_G(L, W_3).
  \end{equation}
  Moreover, from \ref{p:good-deg-con} applied for all $v \in L$ and $W_1, W_3$
  (as $W$), we have $(1 + \eps)e_G(L, W_3) \geq (1 - \eps)e_G(L, W_1)$.
  Therefore, together with \eqref{eq:dense-expand-or-eq1}:
  \[
    e_{F_{23}}(L, W_3 \setminus X) \ge (1 - \eps^2)e_G(L, W_3) \geq (1 -
    \eps^2)\frac{1 - \eps}{1 + \eps} e_G(L, W_1),
  \]
  from which the third property follows as, trivially, $e_G(L, W_1) \geq |F_L|$.

  Having these three properties at hand, we are ready to prove the lemma. If
  $|F_L| \ge (1 - \mu)|F_{12}|$ we are done, so assume the contrary. Observe
  that this implies that at least one of $|F_S|, |F_M|$ has size at least
  $\eps^4|F_{12}|$. If $|F_L|$ is strictly smaller than $\eps^4|F_{12}|$, then
  either at least one of $|F_S|$ and $|F_M|$ has size at least
  $(1-2\eps^4)|F_{12}|$ or both have size at least $\eps^4|F_{12}|$. Thus by
  $\ref{8-small}$ and $\ref{8-medium}$ we get
  \[
    e_{F_{23}}(W_2, W_3 \setminus X) \geq \max\{ (1 + \alpha/4)(1 -
    2\eps^4)|F_{12}|, (1 + \alpha/4)(1 - \eps^4)|F_{12}|\} > (1 + \alpha/8)
    |F_{12}|,
  \]
  which is a contradiction to the assumption of the lemma. Therefore, $|F_L|
  \geq \eps^4|F_{12}|$ and at least one of $|F_S|$ and $|F_M|$ has size at least
  $|F_{12}|-|F_L|-\eps^4|F_{12}|$ or both have size at least $\eps^4|F_{12}|$.
  Thus, again by $\ref{8-small}$ and $\ref{8-medium}$,
  \begin{align*}
    e_{F_{23}}(W_2, W_3 \setminus X) &\geq \max\big\{(1 + \alpha/4)(|F_{12}| -
    \eps^4|F_{12}| - |F_L|), (1 + \alpha/4)(|F_{12}| - |F_L|)\big\} + (1 -
    3\eps)|F_L| \\
    &\ge (1 + \alpha/4)(1 - \eps^4)|F_{12}| - (3\eps + \alpha/4)|F_L|.
  \end{align*}
  Using our assumption $|F_L| < (1 - \mu)|F_{12}|$ and $\mu \geq 32\eps/\alpha$
  this implies $e_{F_{23}}(W_2, W_3 \setminus X) \ge (1 + \mu\alpha/8)|F_{12}|$,
  again contradicting the assumption of the lemma.
\end{proof}

\begin{proof}[Proof of~\ref{lemma:dense-doesnt-drop} in Lemma~\ref{lem:everything}]
  Clearly, if $e_{F_{23}}(W_2, W_3 \setminus X) \ge (1 + 4\eps)|F_{12}|$ we are
  done. Let us assume the opposite. By applying
  part~$\ref{lemma:dense-expand-or}$ of this lemma with $32\eps/\alpha$ (as
  $\mu$) we obtain a set $L \subseteq W_2$ such that for each $v \in L$ it holds
  that $\deg_{F_{12}}(v, W_1) > \tilde np/3$ and
  \begin{equation}\label{eq:dense-doesnt-drop-eq1}
    e_{F_{12}}(W_1, L) \ge (1-32\eps/\alpha)|F_{12}| \geq
    (1-\eps^{2/3})|F_{12}|.
  \end{equation}
  Next, we use part~$\ref{lemma:large-to-all}$ of this lemma with $L$ (as
  $U$) and $E_{F_{12}}(W_1, L)$ (as $F_{12}$) to conclude
  \begin{equation}\label{eq:dense-doesnt-drop-eq2}
    e_{F_{23}}(L, W_3 \setminus X) \ge (1 - \eps^2) e_G(L, W_3).
  \end{equation}
  By taking together \eqref{eq:dense-doesnt-drop-eq1} and
  \eqref{eq:dense-doesnt-drop-eq2}, we finally get
  \begin{align*}
    e_{F_{23}}(L, W_3 \setminus X) &\ge (1 - \eps^2) e_G(L, W_3)
    \osref{\ref{p:good-deg-con}}\ge (1 - \eps^2)(1 - 2\eps) e_G(W_1, L) \\
    &\ge (1 - 3\eps) e_{F_{12}}(W_1, L)
    \osref{\eqref{eq:dense-doesnt-drop-eq1}}\ge (1 - 3\eps)(1 -
    \eps^{2/3})|F_{12}| \ge (1 - \sqrt\eps)|F_{12}|,
  \end{align*}
  and the assertion follows.
\end{proof}

\section{Proof of Theorem \ref{thm:main-theorem}}\label{sec:K3-res}

The proof of Theorem~\ref{thm:main-theorem} is split into two natural parts. In
Theorem~\ref{thm:K3-res-ub} we show that the $K_3$-resilience of $\Gnp$ w.r.t.\
the containment of $C^2_n$ is w.h.p.\ at most $5/9 + \alpha$, for any $\alpha >
0$. Next, in Theorem~\ref{thm:K3-res-lb} we show that the $K_3$-resilience is
w.h.p.\ at least $5/9 - \alpha$, for any $\alpha > 0$. Both of the theorems rely
on the following fact: for every $\eps > 0$ a random graph $\Gamma \sim \Gnp$
w.h.p.\ has the property that each vertex is contained in $(1 \pm
\eps)\binom{n}{2}p^3$ triangles, provided that $p \gg n^{-2/3} \log^{1/3} n$
(see, e.g.~\cite[Theorem 8.5.4]{alon2016probabilistic}).

The proof of the upper bound of $K_3$-resilience stems from a simple
construction and an application of Janson's inequality. We actually show that
w.h.p.\ there exists a subgraph $G$ of $\Gnp$ such that each vertex is contained
in $(4/9 - \gamma)\binom{n}{2}p^3$ triangles and $G$ does not contain a family
of more than $(1 - \gamma)n/3$ vertex-disjoint triangles. This is sufficient
since $C^2_n$ contains $\floor{n/3}$ vertex-disjoint triangles.

\begin{theorem}\label{thm:K3-res-ub}
  For every $\gamma > 0$, there exists a positive constant $C := C(\gamma)$ such
  that for all $p \ge Cn^{-2/3}\log^{1/3}n$ a random graph $\Gamma \sim \Gnp$
  w.h.p.\ contains a spanning subgraph $G \subseteq \Gamma$ in which each vertex
  is contained in at least $(4/9 - \gamma) \binom{n}{2}p^3$ triangles and such
  that $G$ does not contain a family of more than $(1 - \gamma)n/3$
  vertex-disjoint triangles.
\end{theorem}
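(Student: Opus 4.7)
The plan is a simple two-set construction together with Janson's inequality. Fix a partition $V = A \cup B$ with $|B| = \lceil(1/3 + 2\gamma/3)n\rceil$, chosen a priori and independently of $\Gamma$, and set $G := \Gamma - E(\Gamma[B])$---that is, keep all edges of $\Gamma$ with at least one endpoint in $A$. Since every triangle of $G$ has at most one vertex in $B$ (otherwise the edge joining its two $B$-vertices would be removed), every triangle of $G$ uses at least two vertices of $A$. Consequently, any family of vertex-disjoint triangles in $G$ has size at most $|A|/2 = (1 - \beta)n/2 \leq (1 - \gamma)n/3$, where $\beta := |B|/n$; this settles the packing condition.

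For the triangle counts, I observe that the triangles of $\Gamma$ at $v$ which survive in $G$ form a family of triples determined entirely by the partition. For $v \in A$ this family is $\{\{v, x, y\} : \{x, y\} \not\subseteq B\}$, of size $(1 - \beta^2 + o(1))\binom{n}{2}$, and for $v \in B$ it is $\{\{v, x, y\} : x, y \in A\}$, of size $(1 - \beta)^2(1 + o(1))\binom{n}{2}$. Each such triangle is present in $\Gamma$ independently with probability $p^3$, giving expected survival counts $\mu_v \approx (8/9)\binom{n}{2}p^3$ for $v \in A$ and $\mu_v \approx (1-\beta)^2\binom{n}{2}p^3$ for $v \in B$. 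With $\beta = 1/3 + 2\gamma/3$ a direct computation yields $(1-\beta)^2 = (4/9)(1-\gamma)^2 \ge 4/9 - \gamma$ for $\gamma$ small, so in both cases $\mu_v \ge (4/9 - \gamma/2 + o(1))\binom{n}{2}p^3$, leaving room to absorb concentration error.

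The concentration of $T_v(G)$ about $\mu_v$ follows from Janson's inequality (Theorem~\ref{thm:janson}) applied to the sum $T_v(G) = \sum_i X_i$ of indicators $X_i = \mathbf{1}[H_i \subseteq \Gamma]$ over the surviving family of triples $H_i$ at $v$. Two surviving triangles at $v$ are dependent only when they share an edge, which forces a common vertex besides $v$, so $\Delta_v = O(n^3 p^5)$, while $\mu_v = \Theta(n^2 p^3) = \Theta(C^3 \log n)$ by the density hypothesis $p \ge C n^{-2/3}\log^{1/3}n$. In this regime $\Delta_v / \mu_v = O(np^2) = o(1)$, so $\mu_v^2 / (\mu_v + \Delta_v) = \Omega(\mu_v)$, and Janson's lower-tail bound yields per-vertex failure probability at most $\exp(-\Omega(\mu_v)) = n^{-\Omega(C^3)}$, which is $o(n^{-1})$ once $C = C(\gamma)$ is chosen large enough. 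A union bound over $v \in [n]$ completes the proof. The only point requiring care is the simultaneous calibration of $\beta$ so that both $(1-\beta)/2 \le (1-\gamma)/3$ and $(1-\beta)^2 \ge 4/9 - \gamma$ hold; a one-line computation shows that $\beta = 1/3 + 2\gamma/3$ satisfies both for all sufficiently small $\gamma$, and no deeper obstacle arises.
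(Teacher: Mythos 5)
Your construction and overall strategy are exactly those of the paper: delete all edges inside a part of size $\lceil(1/3+2\gamma/3)n\rceil$, observe that every surviving triangle has at least two vertices in the complementary part (which gives the packing bound), and apply Janson's inequality to the deterministic family of surviving triples at each vertex. The calibration $(1-\beta)^2 = (4/9)(1-\gamma)^2 \ge 4/9-\gamma$ and the identification of dependent pairs as triangles sharing an edge (hence a second common vertex) also match the paper's computation.

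One step as written is false, though easily repaired. You claim $\Delta_v/\mu_v = O(np^2) = o(1)$, but the theorem is asserted for all $p \ge Cn^{-2/3}\log^{1/3}n$, including $p$ as large as a constant, and $np^2 = o(1)$ only holds for $p = o(n^{-1/2})$. For larger $p$ the term $\Delta_v$ dominates $\mu_v$ and the Janson exponent is not $\Omega(\mu_v)$ but rather $\Omega(\mu_v^2/\Delta_v) = \Omega(n^4p^6/(n^3p^5)) = \Omega(np)$. This still suffices, since $np \ge Cn^{1/3}\log^{1/3}n \gg \log n$, so the per-vertex failure probability remains $o(n^{-1})$ and the union bound goes through; this two-case bound $\exp\bigl(-\Omega(\min\{\mu_v,\ np\})\bigr)$ is precisely how the paper phrases it (via $\Delta \le 5\mu^2/(|V_2|p)$ and a maximum of two exponentials). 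You should replace the claim "$\Delta_v/\mu_v = o(1)$" with this case distinction; otherwise the argument is complete.
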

\begin{proof}
  Let $V(G) = V_1 \cup V_2$ be a partition of the vertex set of $G$ such that
  \[
    |V_1| = \ceil*{\Big( \frac{1}{3} + \frac{2\gamma}{3} \Big)n} \qquad
    \text{and} \qquad |V_2| = n - |V_1|.
  \]
  Observe that $|V_2| = (2/3 - 2\gamma/3 - o(1))n$. Furthermore, let $G$ be the
  graph obtained from $\Gamma$ by removing all edges with both endpoints in
  $V_1$. For a vertex $v \in V(G)$ let $\calT_v$ denote the family of all
  triangles in $K_n$ which contain $v$ and do not have more than one vertex in
  $V_1$. Set
  \[
    X = \sum_{T \in \calT_v} X_T, \qquad \mu = \E[X], \qquad \text{and} \qquad
    \Delta = \sum_{\substack{T_1, T_2 \in \calT_v \\ T_1 \sim T_2}} \E[X_{T_1}
    X_{T_2}],
  \]
  where $X_T$ is an indicator random variable for the event $\{T \subseteq
  \Gamma\}$. Note that $X$ is a random variable counting the number of triangles
  in $G$ that contain $v$. We aim to show that
  \begin{equation} \label{eq:triangles}
    \Pr \bigg[ X < (4/9 - \gamma)\binom{n}{2}p^3 \bigg] \le e^{-2\log n},
  \end{equation}
  for every $v \in V(G)$.

  Let us estimate $\mu$ and $\Delta$ for an arbitrary vertex $v \in V_1$. We
  have
  \begin{equation}\label{eq:triangles-mu}
    \mu = \sum_{T \in \calT_v} \E[X_T] = \binom{|V_2|}{2} p^3 \ge \big( 2/3 -
    2\gamma/3 - o(1) \big)^2 \frac{n^2p^3}{2} \ge (4/9 - 8\gamma /9)
    \binom{n}{2}p^3.
  \end{equation}
  Note that if two triangles $T_1, T_2 \in \calT_v$ do not share an edge, they
  are independent and thus $T_1 \not \sim T_2$. Therefore, we can bound $\Delta$
  as follows:
  \begin{equation}\label{eq:triangles-delta}
    \Delta = \sum_{\substack{T_1, T_2 \in \calT_v \\ T_1 \sim T_2}} \E[X_{T_1}
    X_{T_2}] \le |V_2|^3 p^5 \le \frac{5\mu^2}{|V_2|p}.
  \end{equation}
  Let us choose $\eps$ such that $(1 - \eps)(4/9 - 8\gamma/9) \ge 4/9 - \gamma$
  and apply Janson's inequality (Theorem~\ref{thm:janson}) with $\eps$ (as
  $\gamma$) to obtain
  \begin{align*}
    \Pr\bigg[ X < (4/9 - \gamma)\binom{n}{2}p^3 \bigg] &\le \Pr[ X < (1 -
    \eps)\mu] \le e^{-\frac{\eps^2 \mu^2}{2(\mu + \Delta)}} \le
    e^{-\frac{\eps^2 \mu^2}{4 \max\{\mu, \Delta\}}} \\
    &\le \max\{ e^{-\eps^2 \mu/4}, e^{-\eps^2 |V_2|p/20} \},
  \end{align*}
  where the last step follows from \eqref{eq:triangles-mu} and
  \eqref{eq:triangles-delta}. Since $|V_2|p \gg \log n$ and $\mu \geq (4/9 -
  8\gamma/9 - o(1))C^3 \log n$, by choosing $C$ large enough with respect to
  $\eps$ and $\gamma$ we obtain~\eqref{eq:triangles} for every vertex in $V_1$.
  The proof of \eqref{eq:triangles} for the case when $v \in V_2$ follows
  analogously and is omitted. By \eqref{eq:triangles} and the union bound over
  all vertices we get that w.h.p.\ each vertex $v\in V(G)$ is contained in at
  least $(4/9 - \gamma) \binom{n}{2}p^3$ triangles in $G$.

  Let $\cF$ be the largest family of vertex-disjoint triangles in $G$. Since $G$
  does not contain an edge with both endpoints in $V_1$, there is no triangle in
  $\cF$ with more than one vertex in $V_1$. This implies that $|V(\cF) \cap V_2
  | \ge 2 |V(\cF) \cap V_1|$, which further shows
  \[
    |V(\cF)| \le 3/2 \cdot |V_2| \le 3/2 \cdot (2/3 - 2\gamma/3) n \le (1 -
    \gamma )n,
  \]
  completing the proof.
\end{proof}

The following lower bound on the $K_3$-resilience is proven by a reduction to
Theorem~\ref{thm:square-res-codegree}.

\begin{theorem}\label{thm:K3-res-lb}
  For every $\gamma > 0$, there exists a positive constant $C := C(\gamma)$ such
  that a random graph $\Gamma \sim \Gnp$ w.h.p.\ satisfies the following,
  provided that $p \ge Cn^{-1/2}\log^3n$. Every spanning subgraph $G \subseteq
  \Gamma$ in which each vertex is contained in at least
  $(4/9+\gamma)\binom{n}{2}p^3$ triangles contains the square of a Hamilton
  cycle.
\end{theorem}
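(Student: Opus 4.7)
The plan is to reduce to Theorem~\ref{thm:square-res-codegree} by exhibiting a spanning subgraph $G' \subseteq G$ lying in $\calG(\Gamma, n, \alpha, p)$ for a suitable $\alpha = \alpha(\gamma) > 0$. The minimum degree condition comes almost for free from the triangle hypothesis: since $G \subseteq \Gamma$, every triangle of $G$ at $v$ corresponds to an edge of $\Gamma$ inside $N_G(v)$, and standard concentration for $G_{n,p}$ yields the upper bound $T_v(G) \leq e_\Gamma(N_G(v)) \leq (1 + o(1)) \binom{\deg_G(v)}{2} p$ w.h.p. Combined with $T_v(G) \geq (4/9 + \gamma) \binom{n}{2} p^3$ this gives $\deg_G(v) \geq \sqrt{4/9 + \gamma - o(1)}\cdot np = (2/3 + \Theta(\gamma))\,np$, so the degree condition is already satisfied by $G$ itself with slack.

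Next, set $G' := G - B$, where $B := \{ \{u, v\} \in E(G) : |N_G(u) \cap N_G(v)| < \alpha np^2 \}$ and $\alpha$ is chosen small with respect to $\gamma$. The crucial observation is that, although $|B_v|$ could a priori be as large as $\Theta(np)$, the degree in $G'$ still stays above $(2/3 + \Omega(\gamma))\,np$. This relies on a quadratic improvement. Writing $G_v := N_G(v) \setminus B_v$ and splitting $T_v(G) = T_v^{GG} + T_v^{BG} + T_v^{BB}$ according to how many of the two other vertices of each triangle lie in $B_v$, the definition of $B_v$ gives
\[
  2\, T_v^{BB} + T_v^{BG} \;=\; \sum_{u \in B_v} |N_G(u) \cap N_G(v)| \;\leq\; |B_v| \cdot \alpha np^2 \;\leq\; \alpha(1 + o(1))\, n^2 p^3,
\]
while the random-graph density estimate inside $N_G(v)$ yields $T_v^{GG} \leq (1 + o(1)) \binom{|G_v|}{2} p$. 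Rearranging $T_v^{GG} \geq T_v(G) - T_v^{BG} - T_v^{BB}$ gives $|G_v|^2 \geq (4/9 + \gamma - O(\alpha))\, n^2 p^2$, so $\deg_{G'}(v) = |G_v| \geq (2/3 + \Omega(\gamma))\, np$ whenever $\alpha \ll \gamma$. This is the step that really uses the strict inequality $\gamma > 0$.

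What remains --- and will be the main technical obstacle --- is verifying the codegree condition in $G'$. For a surviving edge $\{u, v\}$ the codegree in $G$ is at least $\alpha np^2$, while the decrease on passing to $G'$ is bounded by $|B_u \cap N_G(v)| + |B_v \cap N_G(u)|$. The averaging identity $\sum_{v \in N_G(u)} |B_u \cap N_G(v)| = \sum_{w \in B_u} |N_G(u) \cap N_G(w)| \leq |B_u|\, \alpha np^2$, together with a symmetric bound obtained by summing $\sum_{\{v, w\} \in B} |N_G(v) \cap N_G(w)|$ over $B$, controls the total loss but only on average. To promote this to a pointwise statement I plan either to thicken $B$ to the threshold $C \alpha np^2$ for a sufficiently large constant $C$ --- so that each surviving edge carries a safety margin large enough that a short second cleanup round can remove the few edges whose pointwise loss still exceeds it --- or to iterate the removal and bound the cumulative degree loss at each round via the same quadratic estimate as in the previous paragraph. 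Once a suitable $G' \in \calG(\Gamma, n, \alpha', p)$ is produced for some $\alpha' > 0$, Theorem~\ref{thm:square-res-codegree} applied to $G'$ immediately yields $C_n^2 \subseteq G' \subseteq G$.
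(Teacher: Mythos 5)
Your reduction is the same one the paper uses: delete the low-codegree edges and feed the result to Theorem~\ref{thm:square-res-codegree}. Your first two steps are correct and coincide with the paper's argument. The concentration statement you invoke, $e_\Gamma(N_G(v)) \le (1+o(1))\binom{\deg_G(v)}{2}p$, must hold simultaneously for \emph{all} candidate neighbourhoods since $G$ is adversarial; this is exactly Claim~\ref{claim:edges-neigh}, and the union bound works because $|S|p=\omega(1)$. Also note that "codegree of $\{u,v\}$ in $G$" and "number of triangles of $G$ containing $\{u,v\}$" are the same quantity, so your set $B$ is precisely the paper's removed set and your quadratic estimate for $\deg_{G'}(v)$ is the paper's computation in different packaging.

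The codegree of the cleaned-up graph is where the proposal is incomplete, and neither of your suggested repairs closes the gap. For fix (a): the degree computation only yields $|B_u|\le (1/3+o(1))np$, so the averaging identity gives $\sum_{v\in N_G(u)}|B_u\cap N_G(v)|\le |B_u|\cdot C\alpha np^2\approx \tfrac{C\alpha}{3}n^2p^3$, i.e.\ an average per-edge loss of order $\tfrac{C\alpha}{2}np^2$ from the $u$-side alone --- the same order as the safety margin $C\alpha np^2$ you built in. Markov therefore only bounds the number of edges at $u$ whose codegree collapses by a constant fraction of $np$, so the "short second cleanup round" may delete a constant fraction of the edges at a vertex and destroy the minimum degree; no choice of $C$ repairs this, since margin and average loss scale together. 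For fix (b): in an iterated removal, an edge deleted at round $t$ has codegree below $\alpha np^2$ only in the round-$t$ graph, while its codegree in the original $G$ can be as large as $(1+o(1))np^2$; the quadratic estimate needs the bound $\sum_{u\in B_v}|N_G(u)\cap N_G(v)|\le |B_v|\alpha np^2$ with codegrees measured in $G$, so it does not transfer and the cumulative degree loss is not controlled. You should know that the paper itself is silent on exactly this point: it performs your one-shot removal, verifies only the minimum degree of $G'$, and then invokes Theorem~\ref{thm:square-res-codegree}, even though membership in $\calG(\Gamma,n,\alpha,p)$ requires the codegree condition in $G'$ rather than in $G$. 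So you have correctly isolated the one step of this proof that genuinely needs an argument --- but you have not supplied one that works, and the two sketched routes as stated would fail.
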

\begin{proof}
  Choose $\eps > 0$ such that $(1 + \eps)(4/9 + 5\gamma/12) < 4/9 + \gamma/2$
  and $\eps \leq \gamma/4$, and set $C = 10/\eps$. Let $G \subseteq \Gamma$ be
  an arbitrary spanning subgraph of $\Gamma$ such that each vertex of $G$ is
  contained in at least $(4/9 + \gamma)\binom{n}{2}p^3$ triangles. Let $G'
  \subseteq G$ be a subgraph obtained by removing each edge of $G$ which is
  contained in fewer than $\eps np^2$ triangles. We aim to show that $G'$ has
  minimum degree at least $(2/3 + \gamma/4)np$. If this is the case then by
  Theorem~\ref{thm:square-res-codegree} we are done.

  First, we show that by removing the edges which are contained in only a few
  triangles, we did not significantly change the overall number of triangles
  each vertex is in. As $np = \omega(\log n)$, w.h.p.\ we have that every $v \in
  V(\Gamma)$ satisfies $\deg_{\Gamma}(v) \leq (1 + \eps)np$. Moreover, each edge
  in $E(G) \setminus E(G')$ is contained in at most $\eps np^2$ triangles, which
  implies that we did not remove more than $\eps np^2 \cdot (1 + \eps) np \le
  2\eps n^2p^3$ triangles from $G$ touching a single vertex. Since $G$ has the
  property that each vertex is contained in at least $(4/9 + \gamma)
  \binom{n}{2}p^3$ triangles, the previous observation and the choice of $\eps$
  show that in $G'$ each vertex is in at least $(4/9 + \gamma/2)\binom{n}{2}p^3$
  triangles.

  In order to finish the argument we use the following claim, whose proof
  follows below.

  \begin{claim}\label{claim:edges-neigh}
    The following holds w.h.p. For every vertex $v \in V(\Gamma)$ and every
    subset $S \subseteq N_\Gamma(v)$ of size $|S| \ge (2/3)np$, we have $e(S)
    \leq (1 + \eps)\binom{|S|}{2}p$.
  \end{claim}

  With this claim we can easily complete the proof of the theorem. Suppose for
  contradiction that $v \in V(G')$ is a vertex with degree smaller than $(2/3 +
  \gamma/4)np$. By the claim above and the choice of $\eps$ we have that $v$ is
  contained in at most
  \[
    (1 + \eps) (2/3 + \gamma/4)^2 \frac{n^2 p^3}{2} \le (1 + \eps)(4/9 +
    5\gamma/12) \frac{n^2 p^3}{2} < (4/9 + \gamma/2) \binom{n}{2}p^3,
  \]
  triangles---a contradiction.
\end{proof}

\begin{proof}[Proof of Claim~\ref{claim:edges-neigh}]
  It suffices to show that the claim holds for a fixed vertex $v$ with
  probability at least $1 - e^{-\omega(\log n)}$, as the claim then follows by
  the union bound over all vertices. Let $v$ be a vertex from $\Gamma$ and let
  $S \subseteq N_\Gamma(v)$. Recall, we have that $\deg_\Gamma(v) \le (1 +
  \eps)np$ with probability at least $1 - e^{-\omega(\log n)}$. Similarly we
  have
  \begin{equation}\label{eq:s-edges}
    \Pr \bigg[ e(\Gamma[S]) > (1 + \eps) \binom{|S|}{2}p \bigg] \le e^{- \eps^2
    \binom{|S|}{2} p/3}.
  \end{equation}
  By using \eqref{eq:s-edges} and the union bound, the probability that the
  assertion of the claim fails is at most
  \begin{align*}
    \sum_{s = (2/3)np}^{(1 + \eps) np} \binom{(1 + \eps) np}{s} e^{-\eps^2 s(s -
    1)p/6}
    &\le \sum_{s = (2/3)np}^{(1 + \eps) np} \Big( \frac{2 e np}{s} \Big)^s
    e^{-\eps^2 s(s - 1)p/6} \\
    &\le \sum_{s = (2/3)np}^{(1 + \eps)np} (3e)^{np} e^{-\eps^2 s^2 p/12}
    \le \sum_{s = (2/3)np}^{(1 + \eps)np} (e^{3\log 3 - \eps^2 sp/12})^s \\
    & \le \sum_{s = (2/3)np}^{(1 + \eps) np} e^{-2s}
    \leq n \cdot e^{-\Omega(np)},
  \end{align*}
  where in the second to last inequality we used the fact that $\eps^2 s p/12
  \ge \eps^2 np^2/18 \ge 3\log 3$. Finally, since $np = \omega(\log n)$, the
  claim follows.
\end{proof}

\section{Definitions of some graphs}\label{sec:graph-definitions}

The following graphs are used often throughout the paper and we thus give their
definitions here, for easier reference later on. We note that most of these come
from or were inspired by similar definitions in~\cite{nenadov2019powers}.

An {\bf $\ell$-square-path}, denoted by $P^2_\ell$, is a graph defined on a
vertex set $\{v_1, \dotsc, v_\ell\}$ such that $v_i$ and $v_j$ are connected by
an edge if $1 \leq i < j \le i + 2$ (see Figure~\ref{fig:square-path}).

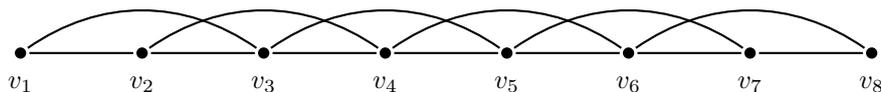
\begin{figure}[!htbp]
  \centering
  \begin{tikzpicture}[scale=0.8, >=stealth, shorten >=1pt, shorten <=1pt]
  \tikzstyle{blob} = [fill=black, circle, inner sep=1.5pt, minimum size=0.5pt]

  \foreach \i in {1,...,8} {
    \node[blob, label={[label distance=0.1cm]below:{\small $v_{\i}$}}] (\i) at (2*\i, 0) {};
  }

  \foreach \i in {1,...,7} {
    \draw[thick] let \n1={int(mod(\i+1, 10))} in (\i) to (\n1);
  }

  \foreach \i in {1,...,6} {
    \draw[thick, bend left=35] let \n1={int(mod(\i+2, 10))} in (\i) to (\n1);
  }
\end{tikzpicture}
  \caption{The $8$-square-path $P^{2}_{8}$.}
  \label{fig:square-path}
\end{figure}

Given a graph $G = (V, E)$ and $\mathbf{a}, \mathbf{b} \in V^2$, we say that $G$
contains a square-path {\em connecting} $\mathbf{a}$ to $\mathbf{b}$, if there
exists an $\ell \in \N$ and an embedding $g \colon V(P^2_\ell) \to V(G)$ such
that $\mathbf{a} = (g(v_1), g(v_2))$ and $\mathbf{b} = (g(v_{\ell - 1}),
g(v_\ell))$. Note that due to the fact that $\mathbf{a}$ and $\mathbf{b}$ are
(ordered) pairs of vertices, a path connecting $\mathbf{a}$ to $\mathbf{b}$ {\em
is not} the same as a path connecting $\mathbf{b}$ to $\mathbf{a}$. However, a
path connecting $\mathbf{a}$ to $\mathbf{b}$ is also a path connecting
$\overline{\mathbf{b}}$ to $\overline{\mathbf{a}}$. It is easy to see that one
can connect two square-paths in order to get a longer square-path.

\begin{proposition}\label{prop:embedding-square}
  Let $G$ be a graph and $\mathbf{a}, \mathbf{b}, \mathbf{c} \in V(G)^2$
  disjoint pairs of vertices. Suppose that in $G$ there exists a square-path
  connecting $\mathbf{a}$ to $\mathbf{b}$ and a square-path connecting
  $\mathbf{b}$ to $\mathbf{c}$ (and thus also $\overline{\mathbf{c}}$ to
  $\overline{\mathbf{b}}$) such that these paths are internally vertex-disjoint.
  Then the union of these two paths is a square-path that connects $\mathbf{a}$
  to $\mathbf{c}$. \qed
\end{proposition}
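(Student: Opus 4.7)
The plan is to exhibit the combined square-path directly by concatenating the two embeddings, and then verify injectivity plus edge-preservation. Let $g \colon V(P^2_\ell) \to V(G)$ realise the first path (so $g(v_1)=a_1$, $g(v_2)=a_2$, $g(v_{\ell-1})=b_1$, $g(v_\ell)=b_2$) and let $h \colon V(P^2_m) \to V(G)$ realise the second path (so $h(w_1)=b_1$, $h(w_2)=b_2$, $h(w_{m-1})=c_1$, $h(w_m)=c_2$). Set $N := \ell + m - 2$ and define $f \colon V(P^2_N) \to V(G)$ on the vertices $u_1, \ldots, u_N$ of $P^2_N$ by
\[
  f(u_i) := \begin{cases} g(v_i) & \text{if } 1 \le i \le \ell, \\ h(w_{i - \ell + 2}) & \text{if } \ell + 1 \le i \le N. \end{cases}
\]
Note $f(u_{\ell-1}) = b_1 = h(w_1)$ and $f(u_\ell) = b_2 = h(w_2)$, so the two definitions agree on the overlap as well, if desired.

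For injectivity, note that $g$ and $h$ are individually injective. Any potential collision must thus be between some $g(v_i)$ with $i \le \ell - 2$ and some $h(w_j)$ with $j \ge 3$. Such a collision would correspond to an internal vertex of one path coinciding with an internal vertex of the other (the only shared vertices allowed are $b_1$ and $b_2$, i.e.\ $g(v_{\ell-1}), g(v_\ell), h(w_1), h(w_2)$), contradicting the hypothesis of internal vertex-disjointness.

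For edge preservation, every edge $\{u_i, u_j\}$ of $P^2_N$ with $j - i \in \{1,2\}$ is mapped into $E(G)$: if both endpoints fall under the first branch ($j \le \ell$) the edge is an image under $g$ of a corresponding edge of $P^2_\ell$; likewise if both fall under the second branch ($i \ge \ell - 1$) it is an image under $h$. The only edges that need separate attention are those straddling the junction, namely $\{u_{\ell-1}, u_{\ell+1}\}$ and $\{u_\ell, u_{\ell+1}\}$ and $\{u_\ell, u_{\ell+2}\}$; but these map to $\{b_1, h(w_3)\}=\{h(w_1), h(w_3)\}$, $\{b_2, h(w_3)\}=\{h(w_2), h(w_3)\}$, and $\{b_2, h(w_4)\}=\{h(w_2), h(w_4)\}$ respectively, all of which are edges of $P^2_m$ under $h$ and hence of $G$. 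Therefore $f$ is an embedding of $P^2_N$ in $G$ whose first two image vertices form $\mathbf{a}$ and whose last two form $\mathbf{c}$, which is exactly a square-path connecting $\mathbf{a}$ to $\mathbf{c}$. No step here is really an obstacle; the entire content is bookkeeping at the junction and invoking internal vertex-disjointness for injectivity.
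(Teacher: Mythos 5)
Your proof is correct and is essentially the paper's argument made explicit: the paper simply points to a figure showing the concatenation at the junction (parametrising the second path from $\overline{\mathbf{c}}$ to $\overline{\mathbf{b}}$ rather than from $\mathbf{b}$ to $\mathbf{c}$, which is only a cosmetic difference), while you write out the index bookkeeping and check the three straddling edges. Nothing is missing.
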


A {\bf $(b, \ell)$-pseudo-path} $S^b_\ell$, where $b \in \{1, 2\}$, is a graph
defined on the vertex set $\{u_1, \dotsc, u_\ell\}$ with the edge set
\[
  E(S^{b}_{\ell}) := \bigcup_{i = 2}^{\ell} \{u_{i - 1}, u_{i}\} \cup
  \bigcup_{\substack{i \in \{3, \dotsc, \ell\} \\ \text{$i$ is odd}}} \{u_{i -
  2}, u_{i}\} \cup
  \bigcup_{\substack{i \in \{4, \dotsc, \ell\} \\ \text{$i$ is even}}} \{u_{i -
  1 - b}, u_{i}\}.
\]
Observe that a $(1, \ell)$-pseudo-path is isomorphic to an $\ell$-square-path; a
$(2, \ell)$-pseudo-path is depicted in Figure~\ref{fig:alter-path}.

\begin{figure}[!htbp]
  \centering
  \begin{tikzpicture}[scale=0.7, >=stealth, shorten >=1pt, shorten <=1pt]
  \tikzstyle{blob} = [fill=black, circle, inner sep=1.5pt, minimum size=0.5pt]

  \foreach \i in {1,...,8} {
    \node[blob, label={[label distance=0.1cm]below:{\small $u_{\i}$}}] (\i) at (2*\i, 0) {};
  }

  \foreach \i in {1,...,7} {
    \draw[thick] let \n1={int(mod(\i+1, 10))} in (\i) to (\n1);
  }

  \foreach \i in {1,3,5} {
    \draw[thick, bend left=35] let \n1={int(mod(\i+2, 10))} in (\i) to (\n1);
  }

  \foreach \i in {1,3,5} {
    \draw[thick, bend left=35] let \n1={int(mod(\i+3, 10))} in (\i) to (\n1);
  }
\end{tikzpicture}
  \caption{The $(2, 8)$-pseudo-path $S^{2}_{8}$}
  \label{fig:alter-path}
\end{figure}
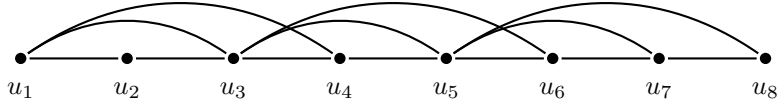

The notion of a $(b, \ell)$-pseudo-path connecting $\mathbf{a}$ to $\mathbf{b}$
is defined in a natural way, similarly as above.

An {\bf $\ell$-backbone-path} $B_{\ell}$, is a graph defined on the vertex set
\[
  W_{\ell} = \bigcup_{i \in [\ell]} \{w_{i, 1}, w_{i, 2}, w_{i, 3} ,w_{i, 4}\}.
\]
We set $\mathbf{w}_i^a = (w_{i, 1}, w_{i, 2})$ and $\mathbf{w}_i^b = (w_{i, 3},
w_{i, 4})$, for every $i \in [\ell]$. The edge set of $B_\ell$ is given by the
union of following graphs (see Figure~\ref{fig:backbone-path}):
\begin{itemize}
  \item edges $\{w_{1, 1}, w_{1, 2}\}$ and $\{w_{1, 3}, w_{1, 4}\}$;
  \item the $4$-square-path $(\mathbf{w}_i^a, \mathbf{w}_i^b)$ for every $2 \le
    i \le \ell$;
  \item the $4$-square-path $(\mathbf{w}_1^a, \overline{\mathbf{w}}_2^a)$;
  \item the $4$-square-path $(\overline{\mathbf{w}}_{i}^b,
    \overline{\mathbf{w}}_{i + 2}^a)$ for every $1 \le i \le \ell - 2$;
  \item the $4$-square-path $(\overline{\mathbf{w}}_{\ell - 1}^b,
    \mathbf{w}_{\ell}^b)$.
\end{itemize}

\begin{figure}[!htbp]
  \centering
  \begin{tikzpicture}[scale=0.5]
	\tikzstyle{blob} = [fill=black, circle, inner sep=1.5pt, minimum size=0.5pt]
	
	\foreach \i in {1,...,4} {
		\node[blob, label={[label distance=0.1cm]left:{\footnotesize $w_{2, \i}$}}] (2\i) at ({((2 * 3 + \i - 1)*360/30 - 288}:6) {};
	}

	\draw[thick]  (21) to (22);
	\draw[bend right=40] (21) to (23);
	\draw[thick]  (22) to (23);
	\draw[bend right=40] (22) to (24);
	\draw[thick]  (23) to (24);

	\foreach \i in {1,...,4} {
		\node[blob, label={below left:{\footnotesize $w_{4, \i}$}}] (4\i) at ({((4 * 3 + \i - 1)*360/30 - 288}:6) {};
	}

	\draw[thick]  (41) to (42);
	\draw[bend right=40] (41) to (43);
	\draw[thick]  (42) to (43);
	\draw[bend right=40] (42) to (44);
	\draw[thick]  (43) to (44);

	\foreach \i in {1,...,4} {
		\node[blob, label={below right:{\footnotesize $w_{5, \i}$}}] (5\i) at ({(4 * 6  - \i - 2)*360/30 - 288}:6) {};
	}
	\draw[thick]  (51) to (52);
	\draw[bend left=40] (51) to (53);
	\draw[thick]  (52) to (53);
	\draw[bend left=40] (52) to (54);
	\draw[thick]  (53) to (54);

	\foreach \i in {1,...,4} {
		\node[blob, label={[label distance=0.1cm]right:{\footnotesize $w_{3, \i}$}}] (3\i) at ({5 * 6  - \i - 2)*360/30 - 288}:6) {};
	}
	\draw[thick]  (31) to (32);
	\draw[bend left=40] (31) to (33);
	\draw[thick]  (32) to (33);
	\draw[bend left=40] (32) to (34);
	\draw[thick]  (33) to (34);

	\node[blob, label={[label distance=-0.1cm]above right:{\footnotesize $w_{1, 4}$}}] (04) at ({0 - 295}:6) {};
	\node[blob, label={[label distance=-0.05cm]above:{\footnotesize $w_{1, 3}$}}] (03) at ({1 * 360/30 - 295}:6) {};
	\node[blob, label={[label distance=-0.05cm]above:{\footnotesize $w_{1, 2}$}}] (02) at ({2 * 360/30 - 281}:6) {};
	\node[blob, label={[label distance=-0.1cm]above left:{\footnotesize $w_{1, 1}$}}] (01) at ({3 * 360/30 - 281}:6) {}; 

	\draw[thick]  (01) to (02);
	\draw[thick]  (03) to (04);

	\draw[bend left]  (01) to (22);
	\draw[bend left]  (02) to (22);
	\draw[bend left]  (02) to (21);

	\draw[bend left]  (23) to (41);
	\draw[bend left]  (23) to (42);
	\draw[bend left]  (24) to (42);

	\draw[bend left]  (43) to (54);
	\draw[bend left]  (43) to (53);
	\draw[bend left]  (44) to (53);

	\draw[bend left]  (51) to (33);
	\draw[bend left]  (52) to (33);
	\draw[bend left]  (52) to (34);

	\draw[bend left]  (31) to (03);
	\draw[bend left]  (32) to (03);
	\draw[bend left]  (32) to (04);
\end{tikzpicture}
  \caption{The graph $B_5$}
  \label{fig:backbone-path}
\end{figure}
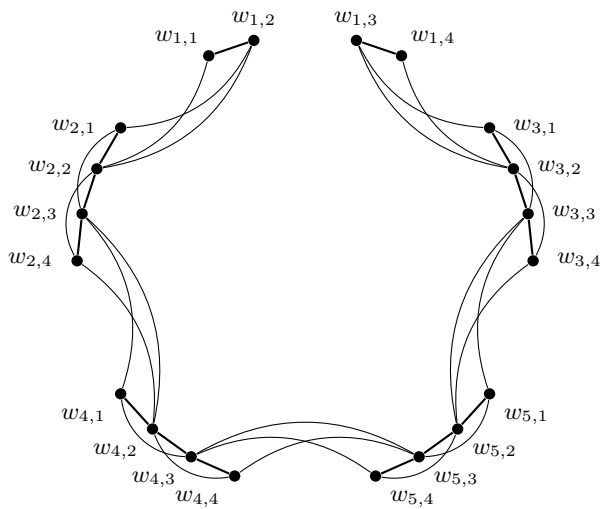
Given a graph $G$ and $\mathbf{a}, \mathbf{b} \in V^2$, we say that a
backbone-path {\em connecting $\mathbf{a}$ to $\mathbf{b}$} is an embedding $g
\colon V(B_\ell) \to V(G)$, for an appropriate $\ell \in \N$, such that
$\mathbf{a}= (g(w_{1, 2}), g(w_{1, 1}))$ and $\mathbf{b} = (g(w_{1, 4}), g(w_{1,
3}))$.

The connection between backbone-paths and pseudo-paths is given by the following
proposition.

\begin{proposition}\label{prop:embedding-backbone}
  Let $G$ be a graph, $\mathbf{a}, \mathbf{b}, \mathbf{c} \in V(G)^2$ disjoint
  pairs of vertices, and $\ell_1, \ell_2 \in \N$ are such that both $\ell_1$ and
  $\ell_1 + \ell_2 - 2$ are divisible by four. Suppose that in $G$ there exists
  a $(2, \ell_1)$-pseudo-path connecting $\mathbf{a}$ to $\mathbf{b}$ and a $(2,
  \ell_2)$-pseudo-path connecting $\mathbf{\overline c}$ to $\mathbf{b}$ such
  that these paths are internally vertex-disjoint. Then the union of these two
  paths is a backbone-path that connects $\mathbf{a}$ to $\mathbf{c}$.
\end{proposition}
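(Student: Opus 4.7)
The plan is to exhibit an explicit embedding $g \colon V(B_\ell) \to V(G)$ with $\ell = (\ell_1 + \ell_2 - 2)/4$, built from the two given pseudo-path embeddings $g_1, g_2$, and then verify that every edge of $B_\ell$ lies in $G$. This is essentially the ``draw-the-picture and check'' strategy used for Proposition~\ref{prop:embedding-square}.

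The key structural fact about $(2, \ell)$-pseudo-paths is that they decompose into a chain of overlapping 4-square-paths. Concretely, I would first show that for every $j \geq 0$, the four vertices $u_{2j+1}, u_{2j+2}, u_{2j+3}, u_{2j+4}$ of $S^2_{\ell}$ carry the edges of a 4-square-path with ordering $u_{2j+2}, u_{2j+1}, u_{2j+3}, u_{2j+4}$; this is a direct check from the edge list of $S^2_{\ell}$, observing that the unique missing diagonal on the block is $\{u_{2j+2}, u_{2j+4}\}$. Consecutive blocks share a pair which flips orientation between the two, exactly matching the alternating reversals appearing in the backbone-path definition. An analogous decomposition applies to the image of $g_2$.

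Next I would recognise $B_\ell$ itself as an assembly of 4-square-paths glued along such overlaps. The inter-cluster 4-square-paths turn the clusters into a cycle: one arc threads clusters $1 \to 2 \to 4 \to 6 \to \cdots$ through the $\mathbf{w}_i^a$ pairs and the other threads $1 \to 3 \to 5 \to \cdots$ through the $\mathbf{w}_i^b$ pairs, both arcs meeting at cluster $\ell$ via the terminating 4-square-path $(\overline{\mathbf{w}}_{\ell-1}^b, \mathbf{w}_\ell^b)$. I would identify the block decomposition of $g_1$ with the first arc (plus the appropriate cluster-$\ell$ internal or terminating segment) and the block decomposition of $g_2$ with the second arc, so that the final blocks of both pseudo-paths land on the same cluster sharing the pair $\mathbf{b}$. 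The divisibility conditions $4 \mid \ell_1$ and $4 \mid \ell_1 + \ell_2 - 2$ are exactly what is needed so that each arc contains the correct (odd) number of blocks and that the orientations of the overlap pairs line up at the gluing point. Once the correspondence is in place, every edge of $B_\ell$ sits inside a 4-vertex block of one of the two pseudo-paths and is thus present in $G$.

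I expect the main obstacle to be purely notational bookkeeping: writing down the explicit vertex correspondence and, in particular, keeping track of the orientation flips at every block boundary and at the gluing pair $\mathbf{b}$. As in the proof of Proposition~\ref{prop:embedding-square}, the cleanest presentation is likely a schematic figure (analogous to Figure~\ref{fig:1-conn-path}) with dashed segments indicating the variable-length middle portions, together with a brief verbal recipe, rather than closed-form index formulas.
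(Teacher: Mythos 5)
Your proposal is correct and follows the same route as the paper, which simply exhibits the explicit embedding in Figure~\ref{fig:2-conn-path} and states that one easily verifies it is a backbone-path, omitting all details. Your decomposition of $S^2_\ell$ into overlapping $4$-square-paths on $u_{2j+1},\ldots,u_{2j+4}$ (ordered $u_{2j+2},u_{2j+1},u_{2j+3},u_{2j+4}$, missing diagonal $\{u_{2j+2},u_{2j+4}\}$, with orientation flipping at each overlap) is exactly the verification the paper leaves implicit; only the precise location of the gluing cluster for $\mathbf{b}$ is a bookkeeping detail you have already flagged.
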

\begin{proof}
  One easily verifies that Figure~\ref{fig:2-conn-path} describes an embedding
  of the two pseudo-paths whose union is a backbone-path. We omit the details.
  \begin{figure}[!htbp]
    \centering
    \begin{subfigure}[b]{.49\textwidth}
      \tikzset{>=latex}
\begin{tikzpicture}[scale=0.5]
	\tikzstyle{blob} = [fill=black, circle, inner sep=1.5pt, minimum size=0.5pt]

	\foreach \i in {1,...,4} {
		\node[blob, label={[label distance=0.1cm]left:{\footnotesize $w_{2, \i}$}}] (2\i) at ({((2 * 3 + \i - 1)*360/30 - 288}:6) {};
	}

	\draw[color=royalazure, very thick, dashed, ->]  (22) to (21);
	\draw[bend right=40, color=royalazure, very thick, dashed, ->] (21) to (23);
	\draw[very thin]  (22) to (23);
	\draw[very thin, bend right=40] (22) to (24);
	\draw[color=royalazure, very thick, dashed, ->]  (23) to (24);

	\foreach \i in {1,...,4} {
		\node[blob, label={below left:{\footnotesize $w_{4, \i}$}}] (4\i) at ({((4 * 3 + \i - 1)*360/30 - 288}:6) {};
	}

	\draw[color=royalazure, very thick, dashed, ->]  (42) to (41);
	\draw[bend right=40, very thin] (41) to (43);
	\draw[very thin]  (42) to (43);
	\draw[very thin, bend right=40] (42) to (44);
	\draw[very thin] (43) to (44);

	\foreach \i in {1,...,4} {
		\node[blob, label={below right:{\footnotesize $w_{5, \i}$}}] (5\i) at ({(4 * 6  - \i - 2)*360/30 - 288}:6) {};
	}
	\draw[very thin]  (52) to (51);
	\draw[very thin, bend left=40] (51) to (53);
	\draw[very thin]  (52) to (53);
	\draw[bend right=40, very thin] (54) to (52);
	\draw[very thin]  (53) to (54);

	\foreach \i in {1,...,4} {
		\node[blob, label={[label distance=0.1cm]right:{\footnotesize $w_{3, \i}$}}] (3\i) at ({5 * 6  - \i - 2)*360/30 - 288}:6) {};
	}
	\draw[very thin]  (32) to (31);
	\draw[very thin, bend left=40] (31) to (33);
	\draw[very thin]  (32) to (33);
	\draw[bend right=40, very thin] (34) to (32);
	\draw[very thin]  (33) to (34);

	\node[blob, label={[label distance=-0.1cm]above right:{\footnotesize $w_{1, 4}$}}] (04) at ({0 - 295}:6) {};
	\node[blob, label={[label distance=-0.05cm]above:{\footnotesize $w_{1, 3}$}}] (03) at ({1 * 360/30 - 295}:6) {};
	\node[blob, label={[label distance=-0.05cm]above:{\footnotesize $w_{1, 2}$}}] (02) at ({2 * 360/30 - 281}:6) {};
	\node[blob, label={[label distance=-0.1cm]above left:{\footnotesize $w_{1, 1}$}}] (01) at ({3 * 360/30 - 281}:6) {}; 

	\draw[color=royalazure, very thick, dashed, ->]  (02) to (01);
	\draw[very thin]  (04) to (03);

	\draw[bend left, color=royalazure, very thick, dashed, ->]  (01) to (22);
	\draw[very thin, bend left]  (02) to (22);
	\draw[very thin, bend left]  (02) to (21);

	\draw[very thin, bend left]  (23) to (41);
	\draw[very thin, bend left]  (23) to (42);
	\draw[bend left, color=royalazure, very thick, dashed, ->]  (24) to (42);

	\draw[very thin, bend left]  (43) to (54);
	\draw[very thin, bend left]  (43) to (53);
	\draw[very thin, bend left]  (44) to (53);

	\draw[very thin, bend left]  (51) to (33);
	\draw[very thin, bend left]  (52) to (33);
	\draw[very thin, bend left]  (52) to (34);

	\draw[very thin, bend left]  (31) to (03);
	\draw[very thin, bend left]  (32) to (03);
	\draw[very thin, bend left]  (32) to (04);
\end{tikzpicture}
      \caption{Blue arrows indicate the order of the vertices mapped by $g_1$.}
      \label{fig:embedding-g1}
    \end{subfigure}
    \hfill
    \begin{subfigure}[b]{.49\textwidth}
      \tikzset{>=latex}
\begin{tikzpicture}[scale=0.5]
	\tikzstyle{blob} = [fill=black, circle, inner sep=1.5pt, minimum size=0.5pt]
	
	\foreach \i in {1,...,4} {
		\node[blob, label={[label distance=0.1cm]left:{\footnotesize $w_{2, \i}$}}] (2\i) at ({((2 * 3 + \i - 1)*360/30 - 288}:6) {};
	}

	\draw[very thin]  (22) to (21);
	\draw[very thin, bend right=40] (21) to (23);
	\draw[very thin]  (22) to (23);
	\draw[bend left, very thin] (24) to (22);
	\draw[very thin]  (23) to (24);

	\foreach \i in {1,...,4} {
		\node[blob, label={below left:{\footnotesize $w_{4, \i}$}}] (4\i) at ({((4 * 3 + \i - 1)*360/30 - 288}:6) {};
	}

	\draw[color=royalred, very thick, dashed, ->]  (42) to (41);
	\draw[very thin, bend right=40] (41) to (43);
	\draw[very thin]  (42) to (43);
	\draw[bend left, color=royalred, very thick, dashed, ->] (44) to (42);
	\draw[color=royalred, very thick, dashed, ->] (43) to (44);

	\foreach \i in {1,...,4} {
		\node[blob, label={below right:{\footnotesize $w_{5, \i}$}}] (5\i) at ({(4 * 6  - \i - 2)*360/30 - 288}:6) {};
	}
	\draw[color=royalred, very thick, dashed, ->]  (52) to (51);
	\draw[bend left, color=royalred, very thick, dashed, ->] (51) to (53);
	\draw[very thin]  (52) to (53);
	\draw[very thin, bend right=40] (54) to (52);
	\draw[color=royalred, very thick, dashed, ->]  (53) to (54);

	\foreach \i in {1,...,4} {
		\node[blob, label={[label distance=0.1cm]right:{\footnotesize $w_{3, \i}$}}] (3\i) at ({5 * 6  - \i - 2)*360/30 - 288}:6) {};
	}
	\draw[color=royalred, very thick, dashed, ->]  (32) to (31);
	\draw[bend left, color=royalred, very thick, dashed, ->] (31) to (33);
	\draw[very thin]  (32) to (33);
	\draw[very thin, bend right=40] (34) to (32);
	\draw[color=royalred, very thick, dashed, ->]  (33) to (34);

	\node[blob, label={[label distance=-0.1cm]above right:{\footnotesize $w_{1, 4}$}}] (04) at ({0 - 295}:6) {};
	\node[blob, label={[label distance=-0.05cm]above:{\footnotesize $w_{1, 3}$}}] (03) at ({1 * 360/30 - 295}:6) {};
	\node[blob, label={[label distance=-0.05cm]above:{\footnotesize $w_{1, 2}$}}] (02) at ({2 * 360/30 - 281}:6) {};
	\node[blob, label={[label distance=-0.1cm]above left:{\footnotesize $w_{1, 1}$}}] (01) at ({3 * 360/30 - 281}:6) {}; 

	\draw[very thin]  (02) to (01);
	\draw[color=royalred, very thick, dashed, ->]  (03) to (04);

	\draw[very thin, bend left]  (01) to (22);
	\draw[very thin, bend left]  (02) to (22);
	\draw[very thin, bend right]  (21) to (02);

	\draw[very thin, bend right]  (41) to (23);
	\draw[very thin, bend left]  (23) to (42);
	\draw[very thin, bend left]  (24) to (42);

	\draw[bend right, color=royalred, very thick, dashed, ->]  (54) to (43);
	\draw[very thin, bend left]  (43) to (53);
	\draw[very thin, bend left]  (44) to (53);

	\draw[very thin, bend left]  (51) to (33);
	\draw[very thin, bend left]  (52) to (33);
	\draw[bend right, color=royalred, very thick, dashed, ->]  (34) to (52);

	\draw[bend right, color=royalred, very thick, dashed, ->]  (04) to (32);
	\draw[very thin, bend left]  (31) to (03);
	\draw[very thin, bend left]  (32) to (03);
\end{tikzpicture}
      \caption{Red arrows indicate the order of the vertices mapped by $g_2$.}
      \label{fig:embedding-g2}
    \end{subfigure}
    \caption{An embedding of a graph $B_5$ by combining two $2$-pseudo-paths.}
    \label{fig:2-conn-path}
  \end{figure}
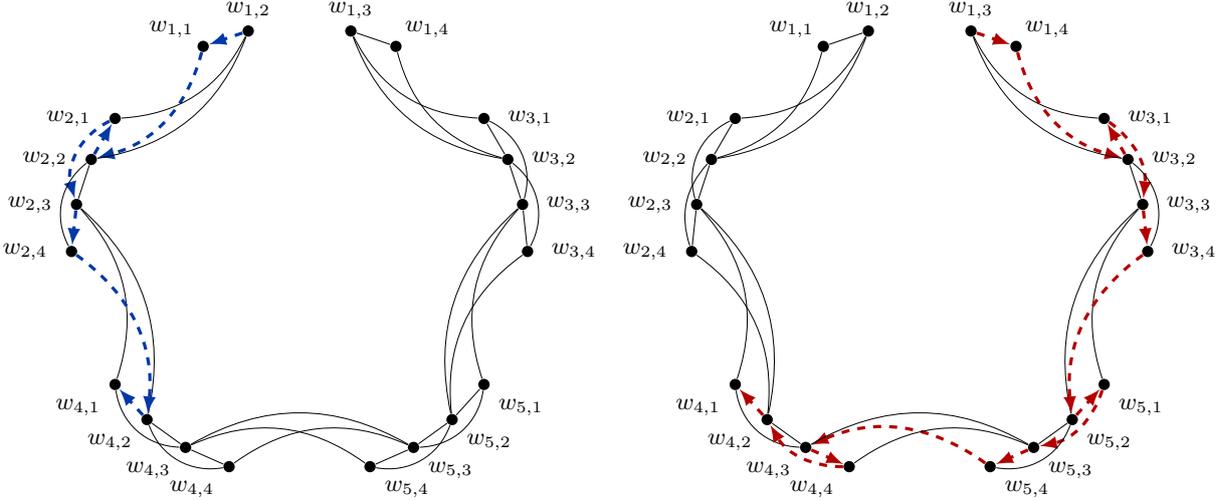
\end{proof}

The reason behind a rather complex looking definition of a backbone-path should
become more apparent once we make use of it as a building block for absorbers
later on (see Figure~\ref{fig:absorber}).

An {\bf $(b, \ell)$-connecting-path} $C^b_\ell$, for $b \in \{1, 2\}$ and $\ell$
divisible by four, is a graph on $\ell$ vertices defined as
\[
  C^{b}_{\ell} :=
    \begin{cases}
      P^{2}_{\ell}, & \text{if } b = 1, \\
      B_{\ell/4}, & \text{if } b = 2.
    \end{cases}
\]

\section{Proof of Theorem \ref{thm:square-res-codegree}}\label{sec:theproof}

Our proof strategy uses the {\em absorbing method}, in particular following a
variant used by Nenadov and the second author in~\cite{nenadov2019powers}. Let
$A$ be a graph and $\mathbf{a}, \mathbf{b} \in V(A)^2$ disjoint pairs of
vertices of $A$. Given a subset $X \subseteq V(A)$, we say that $A$ is an
\emph{$(\mathbf{a}, \mathbf{b}, X)$-absorber} if for every subset $X' \subseteq
X$ there exists a square-path $P \subseteq A$ connecting $\mathbf{a}$ to
$\mathbf{b}$ such that $V(P) = V(A) \setminus X'$.

The following lemma shows that one can find an absorber in a member of
$\calG(\Gamma, n, \alpha, p)$ for a large subset $X$ and $\Gamma \sim \Gnp$. The
proof of the Absorbing Lemma is deferred to Section~\ref{sec:absorbers}.

\begin{restatable}[\textbf{Absorbing Lemma}]{lemma}{absorbinglemma}
  \label{lemma:absorbing-lemma}
  For every $\alpha > 0$, there exists a positive constant $C := C(\alpha)$ such
  that w.h.p.\ for a random graph $\Gamma \sim \Gnp$ every $G \in \calG(\Gamma,
  n, \alpha, p)$ has the following property.

  Let $s \geq C\log^4 n/p^2$ be an integer. Then there are at least $(1 -
  n^{-3})\binom{n}{s}$ subsets $W \subseteq V(G)$ of size $s$ satisfying
  the following. For every subset $X \subseteq V(G) \setminus W$ of size $|X|
  \le |W|/(C\log^2 n)$ there exists an $(\mathbf{a}, \mathbf{b}, X)$-absorber
  $A$ in $G$ such that $V(A) \setminus X \subseteq W$, where $\mathbf{a},
  \mathbf{b} \in W^2$ are two disjoint pairs of vertices.
\end{restatable}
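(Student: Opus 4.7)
The plan is to follow the absorbing method as used by Nenadov and \v{S}kori\'{c}~\cite{nenadov2016powers}: for each potential absorbee $v \in V(G) \setminus W$ exhibit a family of small local ``gadgets'' inside $W$ that can optionally absorb $v$, use Haxell's theorem to select a pairwise disjoint system of gadgets indexed by $X$, and finally stitch the selected gadgets together with short pseudo-paths into a single backbone-path $B_\ell$. The block architecture of $B_\ell$ from Section~\ref{sec:graph-definitions} is precisely what provides the required include/exclude flexibility at each block. Since $W$ is a uniform random subset of size $\gg \log^4 n / p^2$, Lemma~\ref{lemma:random-set-is-good} and Corollary~\ref{cor:goodness-inheritance} give w.h.p.\ that $W$ (and any suitably large random subset thereof) is $(\alpha', \eps, p)$-good for some $\alpha' = \Theta(\alpha)$, so the full edge-expansion toolkit of Lemma~\ref{lem:everything} applies inside $W$.

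For the gadget step I would define a $v$-gadget as a constant-size tuple $(u_1, \ldots, u_k)$ of vertices in $W$ such that $v$ together with this tuple admits two square-path embeddings sharing the same outer endpoints, one visiting $v$ and one avoiding it; concretely this corresponds to replacing the $4$-square-path inside one block of $B_\ell$ by a $5$-square-path passing through $v$. By iterating the codegree property~\ref{p:good-codeg} and the triangle estimate~\ref{p:unif-triangles} (both valid in $W$), every $v$ has at least $\Omega(|W|^{k} p^{m} / \polylog n)$ such gadgets for constants $k, m$ determined by the gadget's shape. I then form the auxiliary bipartite $(k+1)$-uniform hypergraph on $X \cup W$ with a hyperedge for each gadget and verify Haxell's condition (Theorem~\ref{thm:haxells-theorem}): for any $X' \subseteq X$ and any blocked $B \subseteq W$ with $|B| \le (2k - 1)|X'|$, the number of gadgets of any fixed $v \in X'$ blocked by $B$ is $O(|B| |W|^{k-1}) = O(|X'| |W|^{k-1})$, which is dwarfed by $\Omega(|W|^k p^m / \polylog n)$ since $|X| \le |W| / (C \log^2 n)$ and $|W| p^m \gg \polylog n$. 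Hence Haxell's theorem delivers pairwise disjoint gadgets $\{A_v\}_{v \in X}$.

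To finish, I order the selected gadgets arbitrarily and connect consecutive gadget-endpoints by short $(2, \cdot)$-pseudo-paths in $W$ that avoid all vertices already used. Their existence is proved by iterating the edge-extension items of Lemma~\ref{lem:everything}: starting from a fixed edge, the set of edges reachable by pseudo-paths grows by a multiplicative factor at each step (items~\ref{lemma:small-expand}--\ref{lemma:large-to-all}) until it covers a positive fraction of all edges of $W$ (item~\ref{lemma:large-stay-large}), after which a symmetric expansion from the target edge forces the two frontiers to intersect within $O(\log n)$ steps. Proposition~\ref{prop:embedding-backbone} then combines the resulting pseudo-paths into a backbone-path $B_\ell$ with outer endpoints $\mathbf{a}, \mathbf{b} \in W^2$, and by construction $B_\ell$ is the desired $(\mathbf{a}, \mathbf{b}, X)$-absorber: each $v \in X$ can be independently absorbed or skipped at its own gadget. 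The main obstacle is this stitching step---each new connecting pseudo-path must avoid $O(|X|) + O(\ell \log n)$ already-used vertices, and the expansion items must remain applicable to the residual set. This is exactly what the hypothesis $|X| \le |W| / (C \log^2 n)$ buys: the used vertices cover a vanishing fraction of $W$, so the residual set stays good via Corollary~\ref{cor:goodness-inheritance} and all expansion arguments continue to go through.
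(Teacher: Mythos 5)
Your high-level plan (local gadgets plus connections, selected disjointly and stitched into backbone-paths) is in the right spirit, but two of its load-bearing steps do not go through as described. First, the selection step: you apply Haxell's theorem to a hypergraph of \emph{constant-size} gadgets and argue that the $\Omega(|W|^k p^m/\polylog n)$ gadgets at a vertex dwarf the $O(|B||W|^{k-1})$ blocked ones. The correct count of gadgets obtained by iterating \ref{p:good-codeg} is $\Theta\big((|W|p^2)^k\big)$, and the number of them through a fixed blocked vertex is $\Theta\big((|W|p^2)^{k-1}\big)$, so the comparison requires $|B| \ll |W|p^2$. But $|B|$ may be as large as $(2r-3)|X'| = \Theta(|W|/\log^2 n)$, while $|W|p^2$ can be as small as $C\log^4 n$ at the stated density $p \approx n^{-1/2}\polylog n$ --- so an adversarial blocked set can kill \emph{every} gadget of a given vertex and Haxell's condition fails. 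This is precisely why the paper does not use constant-size hyperedges: it selects the five-vertex cores $S_x$ by four successive applications of Hall's theorem (one per random layer $W_1,\dots,W_4$, using \ref{p:good-deg}, \ref{p:unif-density} and \ref{p:good-edge-expansion}), and reserves Haxell for hyperedges of size $\Theta(\log n)$ inside the Connecting Lemma, where the projection-graph expansion analysis --- not a first-moment gadget count --- supplies the robustness against blocked sets.

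Second, the gadget itself. You propose a single backbone-path with one optional absorbee per block (``replace the $4$-square-path inside one block by a $5$-square-path through $v$''). The two traversals of $B_\ell$ in Claim~\ref{cl:single-vertex-absorber} are \emph{global}: the $x$-avoiding traversal interleaves the blocks via the cross-edges $(\obw_i^b,\obw_{i+2}^a)$, so the vertices of block $i$ are not visited consecutively and the include/exclude decision cannot be made independently block by block; with $|X|$ independent binary choices you would need $2^{|X|}$ valid spanning traversals, which your construction does not provide. The paper's architecture avoids this: each $x_i$ gets its \emph{own} backbone-path $B_{\log n}$, with $x_i$ attached only to its first block, which admits exactly two spanning square-path traversals between fixed end-pairs (one through $x_i$, one not), and the $m$ single-vertex absorbers are then chained by ordinary square-paths so that the choices decouple. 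Your stitching step is essentially an inline re-proof of the Connecting Lemma and is broadly the right idea, but note that the construction needs both $b=2$ connections (to realize the backbone-paths from the cores) and $b=1$ connections (the paths $U_i$ inside each absorber and the links between consecutive absorbers), carried out in disjoint random slices of $W$ so that the goodness properties survive each round.
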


In order to construct absorbers one typically resorts to what is usually called
a {\em Connecting Lemma}. Intuitively, it allows us to connect certain pairs of
vertices by vertex-disjoint copies of a fixed graph $F$ through a reservoir of
vertices $W$. For ease of reference, we now define such a notion formally.

\begin{definition}
  Let $t, \ell \in \N$, let $G$ be a graph, and let $W \subseteq V(G)$ be a
  subset of vertices of $G$. Given a family $\calI = \{(\bx_i, \by_i)\}_{i \in
  [t]} \subseteq V(G)^{4}$ of pairwise disjoint $4$-tuples and $b \in \{1, 2\}$,
  we say that a collection $\{F_i \subseteq G\}_{i \in [t]}$ of subgraphs of $G$
  forms an $(\calI, b, \ell)$-{\em matching} in $W$ if the following holds:
  \begin{itemize}
    \item $F_i$ is a copy of a $(b, \ell)$-connecting-path connecting $\bx_i$ to
      $\by_i$, for every $i \in [t]$,
    \item $V(F_i) \setminus \{\bx_i, \by_i\} \subseteq W$, and
    \item $V(F_i) \cap V(F_j) = \varnothing$ for all distinct $i, j \in [t]$.
  \end{itemize}
\end{definition}

In other words, a $(b, \calI, \ell)$-matching `connects' prescribed tuples of
vertices from $\calI$ with copies of $(b, \ell)$-connecting-paths. The
Connecting Lemma shows that under certain conditions such a matchings exist.

\begin{restatable}[\bf Connecting Lemma]{lemma}{connectinglemma}
  \label{lemma:connecting-lemma}
  For every $b \in \{1, 2\}$ and every $\alpha > 0$, there exist positive
  constants $\eps := \eps(\alpha)$ and $C := C(\alpha)$ such that w.h.p.\ for a
  random graph $\Gamma \sim \Gnp$ every $G \in \calG(\Gamma, n, \alpha, p)$ has
  the following property.

  Let $s \geq C\log^4 n/p^2$ be an integer. Then there are at least $(1 -
  n^{-4})\binom{n}{s}$ subsets $W \subseteq V(G)$ of size $s$ satisfying the
  following. For every family of disjoint $4$-tuples $\{\bx_i, \by_i\}_{i \in
  [t]} \subseteq (V(G) \setminus W)^4$, such that $t\log n \leq \eps^6|W|$,
  there exists an $(\{\bx_i, \by_i\}_{i \in [t]}, b, 4\log n)$-matching in $W$.
\end{restatable}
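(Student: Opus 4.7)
The plan is to apply Haxell's theorem (Theorem~\ref{thm:haxells-theorem}) to the auxiliary $r$-uniform hypergraph $\cH = (A \cup B, E)$ defined by $A = \{(\bx_i, \by_i)\}_{i \in [t]}$, $B = W$, and a hyperedge consisting of one tuple $(\bx_i, \by_i) \in A$ together with the $4\log n - 4$ internal vertices in $W$ of some copy of a $(b, 4\log n)$-connecting-path from $\bx_i$ to $\by_i$. Here $r = 4\log n - 3$. An $A$-saturating matching in $\cH$ is exactly an $(\calI, b, 4\log n)$-matching, and Haxell's condition reduces to the following deterministic claim: for every tuple $(\bx, \by) \in A$ and every $B' \subseteq W$ with $|B'| \leq (2r - 3) t \leq 8\eps^6 |W|$, there exists a $(b, 4\log n)$-connecting-path from $\bx$ to $\by$ whose internal vertices avoid $B'$.

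By Lemma~\ref{lemma:random-set-is-good} and Corollary~\ref{cor:goodness-inheritance}, I will assume that $W$ is $(\alpha, \eps, p)$-good, and partition it deterministically into $8\log n$ sub-blocks $W_1, \ldots, W_{8\log n}$ of equal size $\Omega(\log^3 n/p^2)$, each of which is $(\alpha/2, 2\eps, p)$-good by a union bound application of Corollary~\ref{cor:goodness-inheritance}. Since $|B'| \leq 8\eps^6|W|$, a simple averaging shows that all but at most $O(\eps^2 \log n)$ of these sub-blocks satisfy $|B' \cap W_i| \leq \eps^4 |W_i|$, leaving at least $4\log n$ \emph{clean} sub-blocks $W_{j_1}, \ldots, W_{j_{4\log n}}$. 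The connecting-path will be routed through these, with $X := B' \cap W_{j_i}$ playing the role of the forbidden set in each application of Lemma~\ref{lem:everything}.

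For the case $b = 1$, the plan is to construct a square-path from $\bx$ by a breadth-first edge-expansion through the clean sub-blocks. Setting $F_0 := \{\bx\}$, I inductively define $F_i \subseteq E_G(W_{j_{i - 1}}, W_{j_i})$ as the set of terminal edges of square-paths of length $i + 2$ that start at $\bx$ and pass through $W_{j_1}, \ldots, W_{j_i}$ in order. The different parts of Lemma~\ref{lem:everything} are precisely tailored for this expansion: \ref{lemma:star-matching} starts the induction when $|F_i|$ is tiny, \ref{lemma:small-expand} and \ref{lemma:medium-expand} provide super-linear growth in the sparse and intermediate regimes, and once $|F_i| \geq \eps \tilde n^2 p$ items \ref{lemma:large-to-all} together with \ref{lemma:dense-doesnt-drop} guarantee that $F_i$ stays within a $(1 - o(1))$ factor of the maximum. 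After $O(\log n)$ forward steps from $\bx$ and $O(\log n)$ backward steps from $\by$, both reachable edge-sets become linearly dense in the edges between the two middle clean blocks, so the union exceeds the total count and they share at least one edge; selecting any such common edge closes the square-path. For $b = 2$, Proposition~\ref{prop:embedding-backbone} reduces the construction of a backbone-path to that of two internally disjoint $(2, \ell)$-pseudo-paths sharing an endpoint-pair, and each pseudo-path is built by the same iterative edge-expansion with only cosmetic changes to the adjacency relation governing each step.

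The main obstacle will be orchestrating the transitions between the sparse, intermediate, and dense regimes of Lemma~\ref{lem:everything} so that the multiplicative growth of $|F_i|$ is strictly bounded away from $1$ during every sparse-phase step and does not drop below $1 - o(1)$ during the dense phase; the path length $\ell = 4\log n$ is essentially tight for this balance. A secondary point is to verify the bounded-degree hypotheses in Lemma~\ref{lem:everything}~\ref{lemma:small-expand}--\ref{lemma:large-to-all} at each step, which I plan to handle by pruning $F_i$ to a sub-family whose vertex-degrees fall into a single regime before each expansion step, a standard greedy operation whose $\polylog n$ loss is absorbed by the slack in the growth constants.
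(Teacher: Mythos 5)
Your overall architecture — Haxell's theorem on the auxiliary hypergraph, followed by an iterative edge-expansion through $(\alpha,\eps,p)$-good sub-blocks of $W$ using the various regimes of Lemma~\ref{lem:everything}, with forward and backward expansions meeting at a common edge in the middle — is the same as the paper's. But your reduction of Haxell's condition contains a genuine error. Haxell's criterion asks that for every $A' \subseteq A$ and every $B'$ with $|B'| \le (2r-3)|A'|$ there is \emph{one} hyperedge meeting $A'$ and avoiding $B'$; the size of $B'$ scales with $|A'|$, and only one tuple of $A'$ needs to be connectable. You have instead replaced this by the claim that \emph{every fixed} tuple $(\bx,\by)$ can be connected while avoiding an \emph{arbitrary} $B'$ of size up to $(2r-3)t \le 8\eps^6|W|$. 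That claim is false: any connecting-path starting at $\bx=(x_1,x_2)$ must place its first internal vertex in $N_G(x_1,W)\cap N_G(x_2,W)$, a set of size only $O(|W|p^2)$, which is smaller than $8\eps^6|W|$ whenever $p \ll \eps^3$; an adversarial $B'$ containing this entire common neighbourhood blocks every such path. Consequently your plan of starting the expansion from $F_0 := \{\bx\}$ cannot get off the ground — the very first application of Lemma~\ref{lem:everything}~\ref{lemma:star-matching} requires $|U| \ge |X|/\log n$, which fails badly when $|U|=2$ and the forbidden set has size $\Theta(t\log n)$.

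The paper resolves exactly this difficulty by proving only the weaker, existential statement (Lemma~\ref{lem:fractional-connecting}): given all $t$ tuples and a forbidden set $X$ with $|X| \le 8t\log n$, \emph{some} tuple can be connected avoiding $X$. The expansion is launched from all $t$ starting edges simultaneously, so that the initial edge set is large enough relative to $X$ for the star-matching step to apply; only after the reachable edge set between two middle blocks has become dense, of order $\Omega(\tilde n^2 p)$, does a pigeonhole partition of the index set (Step~(3) of Claim~\ref{cl:one-reach-all}) isolate a single tuple whose own projection graph is already dense enough to ignore $X$. You would need to incorporate this ``start from many, isolate one later'' mechanism, and correspondingly weaken your deterministic claim to the existential form, for the proof to go through.
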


In \cite{nenadov2019powers} the authors rely on Janson's inequality in order to
show such a statement. As we are working with a subgraph of a random graph, we
cannot apply this technique here. The proof of the Connecting Lemma thus becomes
a much more challenging task and requires a detailed analysis of `expansion of
the edges' in certain subsets. We defer it to
Section~\ref{sec:connecting-lemma}.

\subsection{Proof of the main result}

Let us first briefly give an overview of the various steps of the proof. The
first step is to partition the graph uniformly at random into sets $U$, $W$, and
$X$ such that $|U| = |W| = \Theta(n)$ and $|X| = \Theta(n/\log^2 n)$. Next, we
find an $(\mathbf{a}, \mathbf{b}, X)$-absorber $A$ for some $\mathbf{a},
\mathbf{b} \in W^2$, such that $V(A) \setminus X \subseteq W$. Let $W'$ denote
the vertices of $W$ which are not part of the absorber, and let $U' = U \cup
W'$. In the third step, we construct $t$ vertex-disjoint square-paths $P_1,
\dotsc, P_t$ in $U'$ such that
\[
  \Big| U' \setminus \bigcup_{i \in [t]} V(P_i) \Big| \ll |X|/\log n \qquad
  \text{and} \qquad t \ll |X|/\log n.
\]
Let us denote the set of vertices from $U'$ not contained in any $P_i$ by $Q$.
Using the Connecting Lemma with $X$ as the `reservoir' (set $W$ in
Lemma~\ref{lemma:connecting-lemma}) we connect $P_1, \dotsc, P_t$, vertices from
$Q$, and pairs $\mathbf{a}$ and $\mathbf{b}$ into a square-path $P$ such that
$P$ connects $\mathbf{b}$ to $\mathbf{a}$ and $V(G) \setminus X \subseteq V(P)$.
Let $X'$ be the set of vertices from $X$ contained in $P$. By the definition of
the absorber $A$ there exists a square-path $P'$ connecting $\mathbf{a}$ to
$\mathbf{b}$ such that $V(P') = V(A) \setminus X'$. By combining $P$ and $P'$ we
obtain the square of a Hamilton cycle. In the remainder of the section we
formalise this argument.

Let $C_1 = C_{\ref{lemma:absorbing-lemma}}(\alpha)$, $C_2 =
C_{\ref{lemma:connecting-lemma}}(\alpha)$, $\eps_1 =
\eps_{\ref{prop:random-set-is-good}}(\alpha)$, $\eps_2 =
\eps_{\ref{lemma:connecting-lemma}}(\alpha)$, $\eps = \min\{\eps_1, \eps_2,
1/(4C_1)\}$, and $C = \max\{ \eps^{-2}C_1C_2, 100 \}$. Let $G$ be a member of
$\calG(\Gamma, n, \alpha, p)$ and let $V(G) = X_1 \cup X_2 \cup W \cup U$ be a
partition of $V(G)$ chosen uniformly at random, where $X := X_1 \cup X_2$, such
that
\[
  |X| = \floor*{\frac{\eps n}{2C_1\log^2 n}}, \quad |X_1| =
  \floor{\eps|X|}, \quad |W| = \floor{\eps n}, \quad \text{and} \quad |U| = n -
  |X| - |W|.
\]
Note that w.h.p.\ $W$ satisfies the conclusion of the Absorbing Lemma
(Lemma~\ref{lemma:absorbing-lemma}), $X_2$ the conclusion of the Connecting
Lemma (Lemma~\ref{lemma:connecting-lemma}), and all four sets are $(\alpha/2,
\eps, p)$-good by Proposition~\ref{prop:random-set-is-good}. From now on we fix
such a choice of subsets.

We apply Lemma~\ref{lemma:absorbing-lemma} with $W$ and $X$ to obtain an
$(\mathbf{a}_0, \mathbf{b}_0, X)$-absorber $A$ for some $\mathbf{a}_0,
\mathbf{b}_0 \in W^2$, such that $V(A) \setminus X \subseteq W$. We can indeed
do this, as $|W| \ge \eps n - 1 \ge C_1\log^4 n/p^2$ and $|X| \le |W|/(C_1\log^2
n)$. Let us denote by $W'$ the subset of vertices from $W$, which are not
contained in $A$. Furthermore, let $U' := U \cup W'$. Next, we use the following
claim whose proof is given at the end of the section.

\begin{restatable}[\textbf{Covering Claim}]{claim}{coveringlemma}
  \label{claim:covering}
  For every $\eps > 0$, there exists a positive constant $K := K(\eps)$ such that
  w.h.p.\ the following holds. The induced subgraph $G[U']$ contains $t \le
  K\log\log n$ vertex-disjoint square-paths $P_1, \dotsc, P_t$ such that $\big|
  U' \setminus \bigcup_{i \in [t]} V(P_i) \big| \leq \eps|U'|/\log^3 n$.
\end{restatable}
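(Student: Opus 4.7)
Proof proposal.

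The natural strategy is iterative peeling. I would set $V_0 := U'$, and at round $i \geq 1$ extract a square-path $P_i \subseteq G[V_{i-1}]$ of length at least $|V_{i-1}|/2$, then set $V_i := V_{i-1} \setminus V(P_i)$. Since $|V_i| \leq 2^{-i}|U'|$, after $t := \lceil 4\log\log n \rceil$ rounds we have $|V_t| \leq \eps|U'|/\log^3 n$, which yields the required covering with $t \leq K\log\log n$ paths for $K := 5$ (or, more carefully, for $K$ depending on $\eps$).

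The technical core is thus the single-round assertion: for every $V' \subseteq U'$ with $|V'| \geq \eps|U'|/\log^3 n$, the induced subgraph $G[V']$ contains a square-path covering at least half of $V'$. Since $U$ and $W$ are chosen uniformly at random, Lemma~\ref{lemma:random-set-is-good} together with Corollary~\ref{cor:goodness-inheritance} ensures that $U'$ itself is $(\alpha/2, 2\eps, p)$-good w.h.p., and a routine union bound over subsets combined with Chernoff bounds extends this so that every $V' \subseteq U'$ of the relevant size inherits goodness (with slightly worse constants). Within $G[V']$, I would build the square-path by starting from a single edge with large codegree (which exists by property~\ref{p:good-codeg}) and extending one vertex at a time. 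The pure greedy argument does not suffice, because the codegree of an edge, which is $\Theta(|V'|p^2)$, can be much smaller than $|V'|/2$; instead, the edge-expansion tools from Lemma~\ref{lem:everything}, in particular items \ref{lemma:large-to-all} and \ref{lemma:large-stay-large}, guarantee that even after removing the already-embedded prefix of the path, the overwhelming majority of vertices in the remaining set still retain large back-degree into the last two layers of the path, so extension can continue until at most a $o(1)$-fraction of $V'$ is left unused.

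The main obstacle I anticipate is controlling expansion inside $V_{i-1}$, because the prefix being removed is a specific (non-random) square-path, and $V_{i-1}$ is therefore not itself a uniformly random subset of $V(G)$. I would address this with a uniform-in-$V'$ version of goodness inheritance: a Chernoff-plus-union-bound over all subsets of $U'$ of size at least $\eps|U'|/\log^3 n$ shows that \emph{every} such subset is $(\alpha/4, 4\eps, p)$-good, after which the extension lemmas of Section~\ref{sec:preliminaries} apply directly to each $V_{i-1}$ encountered during peeling. Alternatively, a more ambitious route is to prove the one-shot statement ``every $(\alpha', \eps', p)$-good set of size $m \geq \log^{10} n/p^2$ contains a square-path covering all but $O(m/\log^{10} n)$ of its vertices,'' which reduces $t$ to an absolute constant; in either case the covering bound $t = O(\log\log n)$ follows, completing the claim.
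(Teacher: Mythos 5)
Your high-level peeling scheme is in the right spirit, but the step you flag as "the main obstacle" is precisely where the proposal breaks, and the fix you offer does not work. You propose to handle the non-randomness of the leftover sets $V_{i-1}$ by proving that \emph{every} subset $V' \subseteq U'$ of size at least $\eps|U'|/\log^3 n$ is $(\alpha/4, 4\eps, p)$-good via Chernoff plus a union bound. This statement is false, not merely hard to prove: goodness includes minimum-degree and codegree conditions such as \ref{p:good-deg}, and an adversarially chosen $V'$ of size $\Theta(n/\log^3 n)$ can simply contain a vertex $v$ together with $|V'|-1$ non-neighbours of $v$ (there are about $(1-p)n \gg |V'|$ of these), so $\deg_G(v,V') = 0$. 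No union bound can rescue a property that fails deterministically for some subsets. The same issue undermines your single-round assertion as a consequence of uniform goodness; and separately, your plan to construct the long square-path inside each $V'$ directly from the expansion items of Lemma~\ref{lem:everything} is a substantial piece of work that you have not carried out (those items are calibrated for connecting through disjoint reservoir sets, not for greedily sweeping up almost all of a single set).

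The paper's proof avoids both problems. First, it does not rebuild the almost-spanning path from scratch: it invokes Theorem~\ref{thm:almost-spanning-square} (an external almost-spanning square-cycle result), whose failure probability $e^{-bn^2p/\log^2 n}$ is strong enough to union-bound over all subsets of size $\geq \eps n/\log^3 n$ \emph{conditionally on} the minimum-degree hypothesis (Corollary~\ref{cor:covering-ub}). Second, to guarantee that hypothesis for the actual sets encountered, it pre-partitions $U'$ uniformly at random into $q = O(\log\log n)$ blocks $V_1,\dots,V_q$ of geometrically decreasing sizes, and at step $i+1$ covers $Q \cup V_{i+1}$, where $Q$ is the (non-random, path-dependent) leftover from earlier steps. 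Because $|Q| \leq \eps |V_{i+1}|$, the minimum degree of every vertex into $Q \cup V_{i+1}$ is inherited from its degree into the \emph{fresh random} block $V_{i+1}$ alone, which is controlled by Chernoff over the partition before any path is chosen. This bootstrapping device is the key idea missing from your proposal; without it (or some substitute that decouples the leftover set from the randomness), the peeling argument cannot be closed.
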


By applying the Covering Claim with $\eps^{10}$ (as $\eps$) we get that there is
a constant $K = K_{\ref{claim:covering}}(\eps^{10})$ and that w.h.p.\ $G[U']$
contains $t \leq K\log\log n$ vertex-disjoint square-paths $P_1, \dotsc, P_t$
which contain all but at most
\begin{equation}\label{eq:main-proof-eq1}
  \frac{\eps^{10} |U'|}{\log^3 n} \le \frac{\eps^{10} n}{\log^3 n} \le
  \frac{\eps^8 |X|}{\log n}
\end{equation}
vertices from $U'$. We denote the set of uncovered vertices by $Q$ and the
end-pairs of $P_i$ by $\mathbf{a}_i$ and $\mathbf{b}_i$, for every $i \in [t]$.

We next show that there exists a matching between $Q$ and $X_1$ which saturates
$Q$ by verifying Hall's condition. Recall, $X_1$ is $(\alpha/2, \eps, p)$-good.
Let $Q_1 \subseteq Q$ be an arbitrary subset of $Q$ and let us denote $N_G(Q_1,
X_1)$ by $Z$. If $|Q_1| \leq \eps^{-3}\log n/p$, then for a vertex $v \in Q_1$
\begin{equation}\label{eq:main-proof-eq2}
  |Z| \geq \deg_{G}(v, X_1) \geq (2/3 + \alpha/2)|X_1|p \geq \frac{\eps^2
  np}{4C_1 \log^2 n} \geq \frac{\eps^{-3}\log n}{p} \geq |Q_1|,
\end{equation}
where the second to last inequality follows from the bound on $p$. On the other
hand, if $|Q_1| \geq \eps^{-3}\log n/p$ then by \ref{p:unif-density} we have
\[
  |Q_1| (2/3 + \alpha/2)|X_1|p \leq e_G(Q_1, Z) \leq (1+\eps)|Q_1|\max\{|Z|,
  \eps^{-3}\log n/p\}p,
\]
which implies $|Z| \geq |X_1|/4$ (with room to spare) as otherwise we get $|X_1|
\leq 4\eps^{-3}\log n/p$, which is not true, again by the bound on $p$. Since
$|Z| \geq |X_1|/4$ we have by \eqref{eq:main-proof-eq2} that $|Z| \geq |Q_1|$
and thus by Hall's theorem (Lemma~\ref{lem:star-hall}) there exists a matching
between $Q$ and $X_1$ which saturates $Q$. Let us denote the edges of the
matching by $\mathbf{m}_i$ for all $i \in \{1, \dotsc, |Q|\}$.

As the last step, we apply the Connecting Lemma
(Lemma~\ref{lemma:connecting-lemma}) with $X_2$ (as $W$) and with the following
family of $4$-tuples (as $\calI$):
\[
  \{ (\mathbf{b}_i, \mathbf{a}_{i + 1}) \}_{i \in \{0, \dotsc, t - 1\}} \cup
  (\mathbf{b}_t, \mathbf{m}_1) \cup \{ (\mathbf{m}_i, \mathbf{m}_{i + 1})
  \}_{i\in[|Q| -1]} \cup (\mathbf{m}_{|Q|}, \mathbf{a}_0),
\]
in order to obtain a square path $P$ which connects $\mathbf{b}_0$ to
$\mathbf{a}_0$ and contains all vertices from $V(G) \setminus V(A)$ and possibly
some vertices from $X_2$. Note that we can indeed apply the lemma as
\[
  |X_2| \geq (1 - \eps) \frac{\eps n}{2C_1 \log^2 n} - 2 \geq \frac{C_2 \log^4
  n}{p^2}
\]
and by \eqref{eq:main-proof-eq1} we have
\[
  t + |Q| + 1 \le K\log\log n + \frac{\eps^8 |X|}{\log n} + 1 \le \frac{\eps^6
  |X_2|}{\log n}.
\]
Finally, let $X' \subseteq X$ be the set of vertices from $X$ contained in $P$.
By the definition of the $(\mathbf{a}_0, \mathbf{b}_0, X)$-absorber $A$ there
exists a path $P'$ connecting $\mathbf{a}_0$ to $\mathbf{b}_0$ such that $V(P')
= V(A) \setminus X'$. By combining $P$ and $P'$ we obtain the square of a
Hamilton cycle in $G$, which concludes the proof of
Theorem~\ref{thm:square-res-codegree}.

In the next subsection we provide the missing proof of the Covering Claim.

\subsection{Proof of the Covering Claim}

The goal of this subsection is to show that $G[U']$ contains $o(|X|/\log n)$
vertex-disjoint square-paths which contain all but at most $o(|X|/\log n)$
vertices from $U'$. In an earlier paper by some of the
authors~\cite{vskoric2018local} we proved that w.h.p.\ any subgraph of $\Gnp$
contains the square of a Hamilton cycle on $(1 - o(1))n$ vertices, provided that
$p \gg (\log n/n)^{1/2}$. The proof of Claim~\ref{claim:covering} relies on this
result which we thus state precisely.

\begin{theorem}[Škorić, Steger, Trujić~\cite{vskoric2018local}]
  \label{thm:almost-spanning-square}
  For every $\eps, \alpha > 0$ there exist positive constants $C := C(\eps,
  \alpha)$ and $b := b(\eps, \alpha)$, such that if $p \geq C (\log n/n)^{1/2}$
  then the random graph $\Gamma \sim \Gnp$ has the following property with
  probability at least $1 - e^{-b n^2p/\log^2 n}$. Every spanning subgraph of
  $\Gamma$ with minimum degree at least $(2/3 + \alpha)np$, contains the square
  of a cycle on at least $(1 - \eps)n$ vertices.
\end{theorem}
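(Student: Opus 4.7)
The plan is to prove Theorem~\ref{thm:almost-spanning-square} along the lines of the absorbing/expansion framework developed elsewhere in this paper, specialised to the easier almost-spanning setting. Since only a $(1-\eps)$-fraction of vertices must be covered, no full absorber is needed; the heavy lifting is to grow one very long square-path and then close it into a cycle using a small reservoir. In order to get the exponential probability bound $1-e^{-bn^2p/\log^2 n}$ claimed in the statement, the argument has to avoid the regularity lemma (which would only give quasi-polynomial probability) and instead rely on direct expansion/codegree estimates of the type Lemma~\ref{lemma:random-set-is-good} and Lemma~\ref{lem:everything} provide, combined with Janson-type concentration for the structural properties of $\Gamma\sim\Gnp$.

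First I would establish the auxiliary random-graph input. With probability at least $1-e^{-\Omega(n^2p/\log^2 n)}$, the graph $\Gamma$ satisfies: (i) every vertex has degree $(1\pm\eps')np$; (ii) every pair of vertices has codegree $(1\pm\eps')np^2$ whenever $np^2\gtrsim \log n$; (iii) the edge-count on any linear-sized pair of sets is concentrated; and (iv) a Janson-type bound ruling out a vertex sitting in a sparse neighbourhood. Properties (i)–(iii) follow from Chernoff + Janson with the required exponential rate because the expected counts are $\Omega(n^2p/\log^2 n)$ or larger. These imply that for any spanning $G\subseteq \Gamma$ with $\delta(G)\ge(2/3+\alpha)np$, every edge of $G$ lies in at least $(\alpha/2)np^2$ triangles of $G$—using exactly the argument of Claim~\ref{claim:edges-neigh} (upper bound on $e(\Gamma[S])$ for any $S\subseteq N_\Gamma(v)$ of size $\ge (2/3)np$). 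In particular, $G$ lies in the class $\calG(\Gamma,n,\alpha/2,p)$ after discarding the few low-triangle edges, just as in the proof of Theorem~\ref{thm:K3-res-lb}.

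Next, split $V(G)$ uniformly at random into a main set $U$ of size $(1-\eps/2)n$ and a reservoir $W$ of size $(\eps/2)n$, so that $W$ is $(\alpha/4,\eps'',p)$-good by Lemma~\ref{lemma:random-set-is-good}. In the induced subgraph $G[U]$ I would greedily build a square-path $P$ by iteratively extending a current ``head'' edge $\{u,v\}$ to a new vertex $w$ forming a triangle with it, always choosing $w$ from a large pool of unused vertices. The key invariant is that after consuming any fraction $\beta<1-\eps/2$ of $U$, the current head edge still has $\Omega(np^2)$ triangle-extensions into the remaining unused set; this follows from the codegree lower bound $|N_G(u)\cap N_G(v)|\ge \alpha np^2$ together with Lemma~\ref{lem:everything}\,\ref{lemma:large-to-all}/\ref{lemma:dense-doesnt-drop}, which ensure that the number of triangle-extensions drops by only a $(1-o(1))$ factor as vertices are removed, until we are within $\eps n/2$ of exhausting $U$. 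This yields a square-path covering all but at most $\eps n/3$ vertices of $U$.

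Finally, close the path into a cycle. The two endpoints $\mathbf{a},\mathbf{b}\in V(G)^2$ of the square-path can be linked by a short square-path through the reservoir $W$ using the Connecting Lemma (Lemma~\ref{lemma:connecting-lemma}) with $t=1$, which applies because $|W|\ge C\log^4 n/p^2$ once $p\ge C(\log n/n)^{1/2}$ and $C$ is large enough. Uncovered vertices—at most $\eps n/3$ from $U$ and at most $|W|=\eps n/2$ from the reservoir not used in the closing—add up to at most $\eps n$, giving the square of a cycle on at least $(1-\eps)n$ vertices. The main obstacle is controlling the greedy extension near the end: when only $o(n)$ vertices remain in $U$, the codegree of the head edge into the unused set could in principle collapse. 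This is exactly what part~\ref{lemma:large-to-all} of Lemma~\ref{lem:everything} is designed to prevent, and so the technical heart of the argument is invoking it repeatedly with the correctly updated ``used'' set $X$, ensuring $|X|\le \eps^4 \tilde n$ throughout the process by re-partitioning $U$ into a small number of nested layers.
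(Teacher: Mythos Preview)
This theorem is not proved in the paper at all: it is quoted verbatim from \cite{vskoric2017local} and used as a black box (to derive Corollary~\ref{cor:covering-ub} and then the Covering Claim). So there is no ``paper's own proof'' to compare against; the correct proof for this paper is simply the citation.

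Your attempt to manufacture a self-contained proof using the present paper's machinery has a genuine gap: the tools you invoke are not available at the density $p = \Theta((\log n/n)^{1/2})$ that Theorem~\ref{thm:almost-spanning-square} assumes. The Connecting Lemma (Lemma~\ref{lemma:connecting-lemma}) needs a reservoir $W$ of size $|W|\ge C\log^4 n/p^2$; at $p^2 = \Theta(\log n/n)$ this is $\Theta(n\log^3 n)$, larger than $n$. Likewise, Lemma~\ref{lem:everything} requires $\tilde n \ge \eps^{-22}\log^2 n/p^2 = \Theta(n\log n)$, again impossible. These lemmas are calibrated for the main theorem's regime $p \gg n^{-1/2}\log^3 n$, not for the weaker hypothesis here, so neither the closing step nor the layered re-partitioning you sketch can be carried out with them.

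The greedy extension step is also not sound as written. At $p = \Theta((\log n/n)^{1/2})$ the codegree $|N_G(u)\cap N_G(v)|$ is only $\Theta(\log n)$, so after consuming a linear number of vertices there is no reason the current head edge retains any unused triangle-extension; the invariant you claim (``drops by only a $(1-o(1))$ factor'') is exactly what fails, and your appeal to Lemma~\ref{lem:everything}\,\ref{lemma:large-to-all}/\ref{lemma:dense-doesnt-drop} is blocked by the density issue above. Finally, the exponential error bound $e^{-b n^2 p/\log^2 n}$ does not fall out of Lemma~\ref{lemma:random-set-is-good} (which gives only $1-o(n^{-2})$); obtaining it is a separate, nontrivial feature of the argument in \cite{vskoric2017local} and is essential for the union bound in Corollary~\ref{cor:covering-ub}.
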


As a corollary we get the following statement.

\begin{corollary}\label{cor:covering-ub}
  For every $\eps, \alpha > 0$ there exists a positive constant $C := C(\eps,
  \alpha)$, such that if $p \geq C(\log^4 n/n)^{1/2}$ then the random graph
  $\Gamma \sim \Gnp$ w.h.p.\ has the following property. Every subgraph $G
  \subseteq \Gamma$ of size $v(G) \ge \eps n/\log^3 n$ with minimum degree at
  least $(2/3 + \alpha) v(G)p$, contains the square of a cycle on at least $(1 -
  \eps)v(G)$ vertices.
\end{corollary}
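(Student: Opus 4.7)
The plan is to reduce to Theorem~\ref{thm:almost-spanning-square} by considering, for each candidate vertex set $S \subseteq V(\Gamma)$ with $|S| = m \geq \eps n/\log^3 n$, the induced subgraph $\Gamma[S]$, which is distributed as $G_{m, p}$. Any subgraph $G \subseteq \Gamma$ with $V(G) = S$ is a spanning subgraph of $\Gamma[S]$, so if the conclusion of Theorem~\ref{thm:almost-spanning-square} holds for $\Gamma[S]$ (with parameters $\eps, \alpha$), then every $G \subseteq \Gamma$ supported on $S$ with minimum degree at least $(2/3 + \alpha)mp$ contains the square of a cycle on at least $(1 - \eps)m$ vertices. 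It thus suffices to establish that w.h.p.\ this event holds simultaneously for every admissible $S$, which we accomplish by a union bound.

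First, we must check that Theorem~\ref{thm:almost-spanning-square} is applicable to $\Gamma[S] \sim G_{m, p}$. Let $C_0 = C_{\ref{thm:almost-spanning-square}}(\eps, \alpha)$ and $b_0 = b_{\ref{thm:almost-spanning-square}}(\eps, \alpha)$. The hypothesis $p \geq C(\log^4 n/n)^{1/2} = C\log^2 n/\sqrt n$ combined with $m \geq \eps n/\log^3 n$ gives
\[
  p \;\geq\; \frac{C}{\sqrt{\eps}} \cdot \frac{\log^2 n}{\sqrt{\eps n/\log^3 n}} \cdot \frac{1}{\log n} \;\geq\; C_0 \sqrt{\log m / m},
\]
provided we choose $C = C(\eps, \alpha)$ sufficiently large in terms of $C_0$ and $\eps$. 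Consequently, Theorem~\ref{thm:almost-spanning-square} yields that $\Gamma[S]$ fails the desired property with probability at most $\exp(-b_0 m^2 p/\log^2 m)$.

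It remains to union-bound over $S$. The probability that the conclusion fails for some admissible $S$ is at most
\[
  \sum_{m = \lceil \eps n/\log^3 n\rceil}^{n} \binom{n}{m} \exp\!\Bigl(- b_0 \tfrac{m^2 p}{\log^2 m}\Bigr) \;\leq\; \sum_{m} \exp\!\Bigl(m \log(en/m) - b_0 \tfrac{m^2 p}{\log^2 m}\Bigr).
\]
In our range of $m$ we have $\log(en/m) \leq 4\log\log n$, while $mp/\log^2 m \geq (\eps n/\log^3 n)(C\log^2 n/\sqrt n)/\log^2 n = \Omega(\sqrt n/\log^3 n)$, which dominates $\log(en/m)$ by a polynomial factor in $n$. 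Hence the exponent is at most $-m \cdot \omega(\log n)$ uniformly in $m$, and the sum is $o(1)$. This gives the desired high-probability event, so the corollary follows.

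The argument is essentially a parameter-chasing union bound; the only point requiring care is ensuring that the constant $C$ in the hypothesis $p \geq C(\log^4 n/n)^{1/2}$ is chosen large enough (in terms of $\eps, \alpha$) so that Theorem~\ref{thm:almost-spanning-square} applies uniformly to every subset $S$ of size $m \geq \eps n/\log^3 n$, and that the exponent $b_0 m^2 p/\log^2 m$ is large enough to absorb the $\binom{n}{m}$ factor coming from the union bound.
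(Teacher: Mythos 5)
Your proposal is correct and follows essentially the same route as the paper: apply Theorem~\ref{thm:almost-spanning-square} to each candidate vertex set of size $m\ge \eps n/\log^3 n$ after checking $p\ge C_0\sqrt{\log m/m}$, then absorb the $\binom{n}{m}$ factor in the union bound using $b_0 m^2p/\log^2 m \ge m\cdot\omega(\log n)$. The only blemish is a harmless algebra slip in your first displayed chain (the quantity $\tfrac{C}{\sqrt\eps}\cdot\tfrac{\log^2 n}{\sqrt{\eps n/\log^3 n}}\cdot\tfrac1{\log n}$ equals $\tfrac{C\log^{5/2}n}{\eps\sqrt n}$, which exceeds $C\log^2 n/\sqrt n$); the intended bound $C_0\sqrt{\log m/m}\le \tfrac{C_0}{\sqrt\eps}\log^2 n/\sqrt n\le p$ is nonetheless valid for $C\ge C_0/\sqrt\eps$.
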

\begin{proof}
  Let $C' = C_{\ref{thm:almost-spanning-square}}(\eps, \alpha)$, $b =
  b_{\ref{thm:almost-spanning-square}}(\eps, \alpha)$, and let $C =
  \max\{1000/(\eps b), C'/\sqrt{\eps}\}$. Note that for all subgraphs $G
  \subseteq \Gamma$ of size $s$, such that $s \ge \eps n / \log^3 n$, we have
  \[
    p \geq C \Big( \frac{\log^4 n}{n} \Big)^{1/2} \geq C \Big( \frac{\eps\log^4
    n}{s \log^3 n} \Big)^{1/2} \geq C'\Big( \frac{\log s}{s} \Big)^{1/2}.
  \]
  Thus, for a fixed subgraph $G$ of size $s$ with the required minimum degree we
  have by Theorem~\ref{thm:almost-spanning-square} that with probability at
  least $1 - e^{-b s^2 p / \log^2 s}$, $G$ contains the square of a cycle on at
  least $(1 - \eps) s$ vertices. Since
  \[
    \frac{b s^2 p}{\log^2 s} \ge s \cdot \frac{b\eps np}{\log^5 n} \ge s \cdot
    \frac{b \eps C \cdot \sqrt{n}}{\log^2 n} \ge 1000s\log n,
  \]
  we may additionally do the union bound over all $s \ge \eps n/\log^3 n$ and
  all subsets of size $s$.
\end{proof}

With Corollary~\ref{cor:covering-ub} at hand we are ready to give the proof of
Claim~\ref{claim:covering} by using a bootstrapping technique developed by
Nenadov and the second author~\cite{nenadov2020komlos}.
\begin{proof}[Proof of Claim~\ref{claim:covering}]
  Without loss of generality we assume that $\eps$ is sufficiently small w.r.t.\
  $\alpha$. Recall that $U$ is $(\alpha/2, \eps, p)$-good and $|U| \ge (1 -
  2\eps)|U'|$, so for each $v \in U'$ we have by \ref{p:good-deg}
  \[
    \deg_G(v, U') \geq \deg_G(v, U) \geq (2/3 + \alpha/2)|U|p \geq (1 -
    2\eps)(2/3 + \alpha/2)|U'|p \geq (2/3 + \alpha/4)|U'|p.
  \]
  Let $q$ be the largest integer such that $|U'|/2^{q - 1} \geq \ceil{n/\log^3
  n}$ and note that $q = O(\log\log n)$. Consider a uniformly at random chosen
  partition $U' = V_1 \cup \dotsb \cup V_q$ such that $V_i = \floor{|U'|/2^i}$
  for all $i \in [q - 1]$ and
  \[
    |V_q| = |U'| - (|V_1| + \dotsb + |V_{q - 1}|) \ge |U'| - |U'| \sum_{i =
    1}^{q - 1} 2^{-i} = |U'|/2^{q - 1}.
  \]
  Similarly, $|V_q| \le |U'| / 2^{q - 1} + q$. Since $p \gg n^{-1/2}\log^{3} n$
  we have
  \begin{equation}\label{eq:covering-eq1}
    |V_i| \ge |U'| / 2^{q-1} - 1 \ge \frac{n}{2\log^3 n} \ge \frac{ \log^2 n
    }{p}.
  \end{equation}
  Thus, as $V_i$ is a random subset of $U'$, by a simple application of
  Chernoff's inequality for a hypergeometric distribution we get
  \[
    \Pr[\deg_G(v, V_i) < (2/3+\alpha/8)|V_i|p] = e^{-\Omega(|V_i|p)} < 1/n^2,
  \]
  where the last inequality follows from \eqref{eq:covering-eq1}. Using the
  union bound over all $v \in U'$ and $i \in [q]$ we have that w.h.p.\ for each
  $v \in U'$ and each $i \in [q]$
  \[
    \deg_G(v, V_i) \ge (2/3 + \alpha/8)|V_i|p.
  \]
  Since $p \gg n^{-1/2}\log^{3}n$ we have that w.h.p.\
  Corollary~\ref{cor:covering-ub} holds when applied with $\eps/8$ (as $\eps$)
  and $\alpha/16$ (as $\alpha$). Having this in mind, we prove by induction on
  $i \in [q]$ that $G[V_1 \cup \dotsb \cup V_i]$ contains $i$ square-paths which
  cover all but at most $(\eps/4)|V_i|$ vertices from $V_1 \cup \dotsb \cup
  V_i$. Since $q = O(\log\log n)$ and
  \[
    |V_q| \le |U'| / 2^{q - 1} + q \le \frac{2n}{\log^3 n} + q \le \frac{4
    n}{\log^3 n},
  \]
  we have that by setting $i = q$ the induction implies the claim.

  By Corollary~\ref{cor:covering-ub} we directly get that there exists a
  square-path in $G[V_1]$ which covers all but at most $(\eps/8)|V_1|$ vertices
  from $V_1$, settling the base case. Assume now that the hypothesis holds for
  some $1 \leq i < q$ and let $P_1, \dotsc, P_{i}$ be the square-paths
  guaranteed by the hypothesis. Furthermore, let $Q \subseteq V_1 \cup \dotsb
  \cup V_i$ denote the subset of vertices not contained in any of the $P_i$'s.
  Then $|Q| \le (\eps/8)|V_i| \le \eps|V_{i + 1}|$ and for every $v \in U'$ we
  have
  \begin{equation}\label{eq:covering-eq2}
    \begin{aligned}
      \deg_{G}(v, Q \cup V_{i + 1}) &\ge \deg_{G}(v, V_{i + 1}) \ge (2/3 +
      \alpha/8)|V_{i + 1}|p \\ &\ge (2/3 + \alpha/8) \frac{|Q| + |V_{i + 1}|}{1
      + \eps}p \ge (2/3 + \alpha/16)|Q \cup V_{i + 1}|p,
    \end{aligned}
  \end{equation}
  where the last inequality follows from the assumption on $\eps$. Therefore, by
  Corollary~\ref{cor:covering-ub} we know that $G[Q \cup V_{i + 1}]$ contains a
  square-path $P_{i + 1}$ which covers all but at most $(\eps/8)|Q \cup V_{i +
  1}| \le (\eps/4)|V_{i + 1}|$ vertices. Observe that we can indeed use
  Corollary~\ref{cor:covering-ub} since $|V_{i + 1}| \ge n/(2 \log^3 n)$ and by
  \eqref{eq:covering-eq2} we have that $\delta(G[Q \cup V_{i + 1}]) \ge (2/3 +
  \alpha/16) |Q\cup V_{i + 1}|p$. As the vertices from $(V_1 \cup \dotsb \cup
  V_{i}) \setminus Q$ are already contained in $V(P_1) \cup \dotsb \cup
  V(P_{i})$, this shows that the hypothesis holds for $i + 1$.
\end{proof}

\section{Proof of the Absorbing Lemma}\label{sec:absorbers}

Our strategy for constructing an absorber for a set $X$ consists of two steps.
In the first step we find an $(\mathbf{a}_i, \mathbf{b}_i, \{x_i\})$-absorber
$A_i$ (a \emph{single-vertex} absorber) for each $x_i \in X$, such that they are
pairwise disjoint. In the second step, by using the Connecting Lemma, we find a
square-path from $\mathbf{b}_i$ to $\mathbf{a}_{i+1}$, for every $1 \le i \le m
- 1$, such that they are pairwise disjoint and also disjoint from $A_i$'s. It is
easy to see that this gives an $(\mathbf{a}_1, \mathbf{b}_m, X)$-absorber: given
$X' \subseteq X$, for every $x_i \in X$ we choose a square-path in $A_i$
depending on whether $x_i \in X'$ or $x_i \notin X'$.

We use the following construction for a single-vertex absorber.

\begin{claim}\label{cl:single-vertex-absorber}
  Let $A_x$ be a graph obtained as the union of following graphs and edges
  \[
    B_\ell \cup \{w_{1, 2}, w_{1, 3}\} \cup \bigcup_{i \in [4]} \{w_{1, i}, x\}
    \cup \bigcup_{i \in [\ell]} U_i,
  \]
  where $U_i$ is a square-path connecting $\mathbf{w}_i^b$ to $\mathbf{w}_{i +
  1}^a$ for every $1 \le i < \ell$, such that all the square-paths are pairwise
  vertex-disjoint and also disjoint from $B_\ell$ (except for the pairs of
  vertices they connect). Furthermore, vertex $x$ is not contained in either
  $B_\ell$ or any $U_i$. Then $A_x$ is a $(\bw_1^a, \bw_\ell^b,
  \{x\})$-absorber.
\end{claim}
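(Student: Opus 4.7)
My plan is to prove Claim~\ref{cl:single-vertex-absorber} by explicitly exhibiting, inside $A_x$, two square-paths from $\bw_1^a$ to $\bw_\ell^b$: one spanning all of $V(A_x)$ (so necessarily using $x$) and one spanning $V(A_x)\setminus\{x\}$. By Proposition~\ref{prop:embedding-square}, I may glue shorter square-paths at shared endpoint pairs, so it suffices to build each of the two paths segment by segment.

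For the path that includes $x$, I first check that $w_{1,1}, w_{1,2}, x, w_{1,3}, w_{1,4}$ is a $5$-vertex square-path in $A_x$: all three consecutive triples form triangles, using the two Block~$1$ edges of $B_\ell$, the added edge $\{w_{1,2}, w_{1,3}\}$, and the four edges incident to $x$. This gives a square-path from $\bw_1^a$ to $\bw_1^b$ through $x$. I then concatenate it with $U_1$ (from $\bw_1^b$ to $\bw_2^a$), the internal $4$-square-path of Block~$2$ (from $\bw_2^a$ to $\bw_2^b$), then $U_2$, the internal $4$-square-path of Block~$3$, and so on, finishing with the internal $4$-square-path of Block~$\ell$. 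The resulting path ends at $\bw_\ell^b$ and uses every vertex of $A_x$.

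For the path that avoids $x$, Block~$1$ cannot be traversed internally (even with $\{w_{1,2}, w_{1,3}\}$, the required square-path edges $\{w_{1,1}, w_{1,3}\}$ and $\{w_{1,2}, w_{1,4}\}$ are missing), so instead I use the zigzag structure of $B_\ell$ together with the $U_i$'s traversed in reverse. Concretely, the path is the concatenation, in order, of: (i) the backbone $4$-square-path $(\bw_1^a, \overline{\bw}_2^a)$, landing at $\overline{\bw}_2^a$; (ii) for each $k = 1, 2, \ldots, \ell-2$ in turn, the reverse of $U_k$ (which is a square-path from $\overline{\bw}_{k+1}^a$ to $\overline{\bw}_k^b$) followed by the backbone $4$-square-path $(\overline{\bw}_k^b, \overline{\bw}_{k+2}^a)$, landing at $\overline{\bw}_{k+2}^a$; (iii) the reverse of $U_{\ell-1}$, landing at $\overline{\bw}_{\ell-1}^b$; (iv) the final backbone $4$-square-path $(\overline{\bw}_{\ell-1}^b, \bw_\ell^b)$, landing at $\bw_\ell^b$. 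A short bookkeeping check shows that the a-pair and b-pair of each Block $i \in [\ell]$, together with the interior of each $U_i$ for $i \in [\ell-1]$, are each traversed exactly once, so the path spans $V(A_x)\setminus\{x\}$.

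The main obstacle is verifying the square-path condition at every junction between consecutive segments in the second construction: any three consecutive vertices along the concatenation must span a triangle in $A_x$. Because each junction occurs at a shared endpoint pair, the check reduces to the defining property of a $4$-square-path (each endpoint pair forms a triangle with its adjacent interior vertex) together with the square-path condition built into each $U_i$; this has to be done case by case for the transitions backbone-to-$U_i$, $U_i$-to-backbone, and the opening and closing junctions, but each check is mechanical. The "with $x$" path requires only one such verification, at the $5$-vertex configuration in Block~$1$.
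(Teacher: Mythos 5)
Your proposal is correct and is essentially identical to the paper's proof: both cases are handled by exhibiting exactly the two traversal orders you describe — $\bw_1^a, x, \bw_1^b, U_1, \bw_2^a, \bw_2^b, \ldots, \bw_\ell^b$ when $x$ is kept, and the zigzag $\bw_1^a, \obw_2^a, \overline{U}_1, \obw_1^b, \obw_3^a, \overline{U}_2, \obw_2^b, \ldots, \obw_{\ell-1}^b, \bw_\ell^b$ when $x$ is omitted. The only difference is presentational: you spell out the junction checks that the paper delegates to its figure.
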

\begin{proof}
  There are only two cases we need to consider: $X' = \varnothing$ and $X' =
  \{x\}$. We specify the desired square-path from $\bw_1^a$ to $\bw_\ell^b$ in
  each case by giving the ordering in which we traverse the vertices of such a
  path (see Figure~\ref{fig:absorber}):
  \begin{itemize}
    \item $X' = \varnothing$: $\bw_1^a, x, \bw_1^b, U_1, \bw_2^a, \bw_2^b, U_2,
      \bw_3^a, \bw_3^b, U_3, \bw_4^a, \dotsc, \bw_{\ell - 1}^b, U_{\ell-1},
      \bw_{\ell}^a, \bw_{\ell}^b$,
    \item $X' = \{x\}$: $\bw_1^a, \obw_2^a, \overline{U}_1, \obw_1^b, \obw_3^a,
      \overline{U}_2, \obw_2^b, \obw_4^a, \overline{U}_3, \obw_3^b, \dotsc
      \obw_{\ell}^a, \overline{U}_{\ell - 1}, \obw_{\ell - 1}^b, \bw_{\ell}^b$.
  \end{itemize}

  \begin{figure}[!htbp]
    \centering
    \begin{subfigure}[b]{.49\textwidth}
      \centering
      \begin{tikzpicture}[scale=0.5]
	\tikzstyle{blob} = [fill=black, circle, inner sep=1.5pt, minimum size=0.5pt]
	
	\foreach \a in {2, 4} {
		\foreach \i in {1,...,4} {
			\node[blob] (\a\i) at ({((\a * 3 + \i - 1)*360/30 - 288}:6) {};					
		}

		\draw[color=royalazure, very thick] (\a1) to (\a2);
		\draw[bend right=40] (\a1) to (\a3);
		\draw[color=royalazure, very thick]  (\a2) to (\a3);
		\draw[bend right=40] (\a2) to (\a4);
		\draw[color=royalazure, very thick]  (\a3) to (\a4);
	}

	\foreach \i in {1,...,4} {	
		\node[blob] (5\i) at ({(4 * 6  - \i - 2)*360/30 - 288}:6) {};							
	}
	\draw[color=royalazure, very thick]  (51) to (52);
	\draw[bend left=40] (51) to (53);			
	\draw[color=royalazure, very thick]  (52) to (53);
	\draw[bend left=40] (52) to (54);
	\draw[color=royalazure, very thick]  (53) to (54);		

	\foreach \i in {1,...,4} {	
		\node[blob] (3\i) at ({5 * 6  - \i - 2)*360/30 - 288}:6) {};							
	}		
	\draw[color=royalazure, very thick]  (31) to (32);
	\draw[bend left=40] (31) to (33);			
	\draw[color=royalazure, very thick]  (32) to (33);
	\draw[bend left=40] (32) to (34);
	\draw[color=royalazure, very thick]  (33) to (34);		

	\node[blob] (04) at ({0 - 295}:6) {};
	\node[blob] (03) at ({1 * 360/30 - 295}:6) {};
	\node[blob] (02) at ({2 * 360/30 - 281}:6) {};
	\node[blob] (01) at ({3 * 360/30 - 281}:6) {};

	\node[blob] (x) at ({360/20 - 288}:8) {}; \node at ({360/20 - 288}:8.5) {$x$};

	\draw[color=royalazure, very thick]  (01) to (02); \draw[color=royalazure, very thick]  (02) to (x); 
	\draw[color=royalazure, very thick]  (x) to (03); 
	\draw[color=royalazure, very thick]  (03) to (04); 
	\draw (02) to (03); \draw[bend left=50] (01) to (x); \draw[bend left=50] (x) to (04);

	\draw[color=royalazure, very thick] (04) to (21); \node at (-0.5, 4) {$U_1$};
	\draw[color=royalazure, very thick] (24) to (31); \node at (-0.5, 1) {$U_2$};
	\draw[color=royalazure, very thick] (34) to (41); \node at (1, -1) {$U_3$};
	\draw[color=royalazure, very thick] (44) to (51); \node at (1, -4) {$U_4$};

	\node at (-1.9, 6.2) {$\bw_1^a$}; \node at (1.9, 6.2) {$\bw_1^b$};
	\node at (-6.2, 3.2) {$\bw_2^a$}; \node at (-6.9, 0.8) {$\bw_2^b$};
	\node at (6.2, 3.2) {$\bw_3^a$}; \node at (6.9, 0.8) {$\bw_3^b$};
	\node at (-5.3, -4.5) {$\bw_4^a$}; \node at (-2.2, -5) {$\bw_4^b$};
	\node at (5.3, -4.5) {$\bw_5^a$}; \node at (2.2, -4.9) {$\bw_5^b$};
\end{tikzpicture}
      \caption{The square-path from $\bw_1^a$ to $\bw_{5}^b$ including $x$.}
    \end{subfigure}
    \hfill
    \begin{subfigure}[b]{.49\textwidth}
      \centering
      \begin{tikzpicture}[scale=0.5]
	\tikzstyle{blob} = [fill=black, circle, inner sep=1.5pt, minimum size=0.5pt]

	\foreach \a in {2, 4} {
		\foreach \i in {1,...,4} {				
			\node[blob] (\a\i) at ({((\a * 3 + \i - 1)*360/30 - 288}:6) {};
		}

		\draw[color=royalred, very thick] (\a1) to (\a2);
		\draw[color=royalred, very thick] (\a3) to (\a4);
	}

	\foreach \i in {1,...,4} {
		\node[blob] (5\i) at ({(4 * 6  - \i - 2)*360/30 - 288}:6) {};
	}
	\draw[color=royalred, very thick] (51) to (52);
	\draw[color=royalred, very thick] (53) to (54);

	\foreach \i in {1,...,4} {
		\node[blob] (3\i) at ({5 * 6  - \i - 2)*360/30 - 288}:6) {};
	}
	\draw[color=royalred, very thick] (31) to (32);
	\draw[color=royalred, very thick] (33) to (34);

	\node[blob] (04) at ({0 - 295}:6) {};
	\node[blob] (03) at ({1 * 360/30 - 295}:6) {};
	\node[blob] (02) at ({2 * 360/30 - 281}:6) {};
	\node[blob] (01) at ({3 * 360/30 - 281}:6) {};

	\draw[color=royalred, very thick] (01) to (02); \draw[color=royalred, very thick] (03) to (04);

	\draw[color=royalred, very thick, bend left] (04) to (21); \node at (-0.5, 4) {$U_1$};
	\draw[color=royalred, very thick] (24) to (31); \node at (-0.5, 1) {$U_2$};
	\draw[color=royalred, very thick] (34) to (41); \node at (1, -1) {$U_3$};
	\draw[color=royalred, very thick, bend left] (44) to (51); \node at (1.2, -3) {$U_4$};

	\draw[bend left, color=royalred, very thick] (02) to (22); \draw[bend left] (01) to (22); \draw[bend left] (02) to (21);
	\draw[bend right] (04) to (32); \draw[bend right, color=royalred, very thick] (03) to (32); \draw[bend right] (03) to (31);
	\draw[bend left] (24) to (42); \draw[bend left, color=royalred, very thick] (23) to (42); \draw[bend left] (23) to (41);
	\draw[bend right] (34) to (52); \draw[bend right, color=royalred, very thick] (33) to (52); \draw[bend right] (33) to (51);
	\draw[bend left] (44) to (53); \draw[bend left, color=royalred, very thick] (43) to (53); \draw[bend left] (43) to (54);

	\node at (-2, 6.2) {$\bw_1^a$}; \node at (2, 6.2) {$\bw_1^b$};
	\node at (-5.8, 3.2) {$\bw_2^a$}; \node at (-6.5, 0.8) {$\bw_2^b$};
	\node at (5.7, 3.2) {$\bw_3^a$}; \node at (6.6, 0.8) {$\bw_3^b$};
	\node at (-5, -4.5) {$\bw_4^a$}; \node at (-2.7, -6) {$\bw_4^b$};
	\node at (5, -4.3) {$\bw_5^a$}; \node at (2.7, -6) {$\bw_5^b$};
\end{tikzpicture}
      \caption{The square-path from $\bw_1^a$ to $\bw_{5}^b$ without $x$.}
    \end{subfigure}

    \caption{A single-vertex absorber $A_x$. Note that both square-paths use all
    vertices of $A_x$.}
    \label{fig:absorber}
  \end{figure}
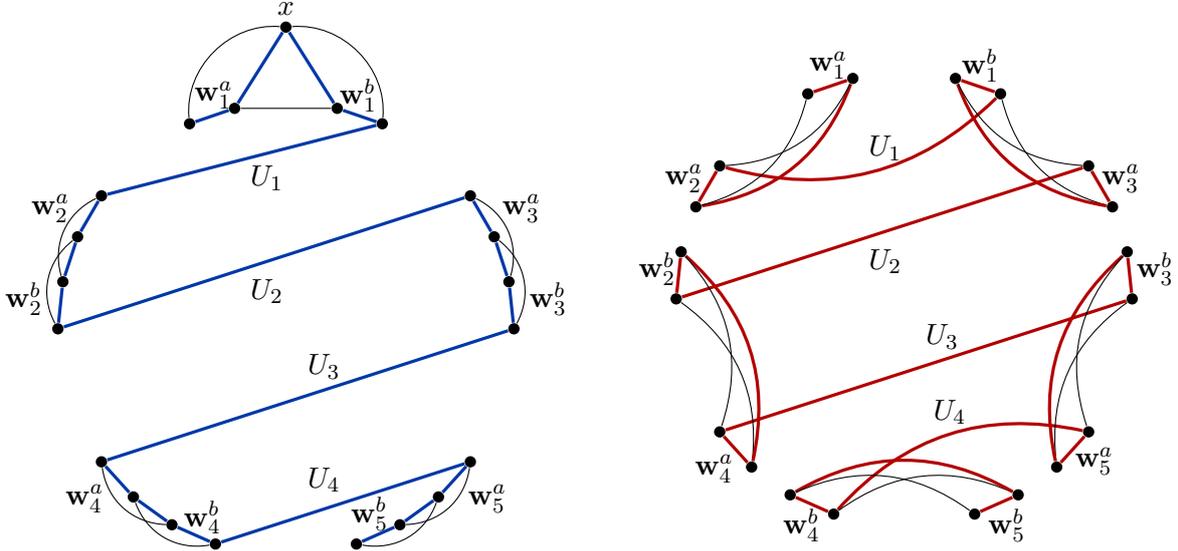
\end{proof}

Now, we are ready to present the proof of the Absorbing Lemma. For the
convenience of the reader we first restate the lemma.

\absorbinglemma*
\begin{proof}[Proof of Lemma \ref{lemma:absorbing-lemma}]
  Let $\eps' = \eps_{\ref{lemma:connecting-lemma}}(\alpha)$, $\eps'' =
  \eps_{\ref{prop:random-set-is-good}}(\alpha)$, $\eps = \min\{\eps', \eps'',
  1/100\}$, $C' = C_{\ref{lemma:connecting-lemma}}(\eps', \alpha)$, and $C =
  40\max\{C', \eps^{-6}\}$. Let $W = W_1 \cup \dotsb \cup W_7$ be such that
  $W_i$'s are pairwise vertex-disjoint, all of size $|W_i| \geq \floor{s/10}$,
  and chosen uniformly at random from $G$ among all sets of prescribed size. By
  the union bound, with probability at least $1 - o(n^{-3})$ it holds that for
  all $i \in [7]$, $W_i$ is $(\alpha/2, \eps, p)$-good by
  Proposition~\ref{prop:random-set-is-good}, and satisfies the conclusion of the
  Connecting Lemma (Lemma~\ref{lemma:connecting-lemma}). From now on fix such a
  choice of sets $W_i$ (and thus $W$) and note that it is sufficient to show the
  conclusion of the lemma for this particular $W$.

  Let $X = \{x_1, \dotsc, x_m\} \subseteq V(G) \setminus W$ be a subset of
  vertices such that $m \le s / (C \log^2 n)$ and observe that $4|X|\log^2 n \le
  \eps^6|W_i|$. Furthermore, let $S_{x_i}$ be the subgraph of $A_{x_i}$ (as
  defined in Claim~\ref{cl:single-vertex-absorber}) induced on the vertex set
  $\{w_{1, 1}, w_{1, 2}, w_{1, 3}, w_{1, 4}, x_i\}$. We aim to construct a
  vertex-disjoint collection $\{S_{x_i}\}_{i \in [m]}$ in $G$, such that each
  $S_{x_i}$ contains $x_i$ and no other vertex from $X$. We do this in four
  steps.

  {\bf Step 1.} First, we show that there exists a matching $M_1$ between $X$
  and $W_1$ saturating $X$. Let $X' \subseteq X$ be a subset of $X$ and let us
  denote $N_G(X', W_1)$ by $Z$. If $|X'| \le \eps^{-3}\log n/p$ then for a
  vertex $x_i \in X'$
  \[
    |Z| \ge |N_G(x_i, W_1)| \ge (2/3 + \alpha/2)|W_1|p \ge (2C/30) \log^4 n/p
    \ge |X'|,
  \]
  where the second inequality follows from \ref{p:good-deg}. If we assume $|X'|
  > \eps^{-3}\log n/p$ (and hence $|Z| \geq \eps^{-3}\log n/p$ by analysis from
  above) then by \ref{p:unif-density} and \ref{p:good-deg} we have
  \[
    |X'| (2/3 + \alpha/2)|W_1| p \le e_G(X', Z) \le (1 + \eps) |X'||Z|p,
  \]
  which implies $|Z| \ge |W_1|/2 \ge |X'|$. Thus, Hall's condition is satisfied
  and the desired matching $M_1$ exists. Let us denote the matched vertex of
  some $x_i \in X$ in $M_1$ by $M_1(x_i)$.

  {\bf Step 2.} In the next step, we want to find a family of $m$
  vertex-disjoint triangles $\calT_1$, such that each triangle contains exactly
  one edge from $M_1$ and intersects $W_2$ in exactly one vertex. We achieve
  this again with the help of Hall's matching theorem
  (Lemma~\ref{lem:star-hall}). Let $X' \subseteq X$ and let us denote
  \[
    \bigcup_{x_i \in X'} \big( N_G(x_i, W_2) \cap N_G(M_1(x_i), W_2) \big)
  \]
  by $Z$. By applying \ref{p:good-edge-expansion} to the edges of $M_1$ incident
  to the vertices of $X'$ or its arbitrary subset of size $\eps/p^2$ in case
  $|X'| > \eps/p^2$ (as $\calP$) we have
  \[
    |Z| \ge \alpha \cdot \min\{\eps/p^2, |X'|\} |W_2|p^2 \ge \alpha \cdot \min\{
    \eps|W_2|, |X'| \cdot (C/10) \log^4 n \} \ge |X'|,
  \]
  and by Lemma~\ref{lem:star-hall} we conclude that the desired collection
  $\calT_1$ exists. For $x_i \in X$ let us denote by $\mathbf{u}_i = (u_{i, 1},
  u_{i, 2})$ the two vertices sharing the triangle with $x_i$ from $\calT_1$,
  where $u_{i, 1} \in W_1$ and $u_{i, 2} \in W_2$.

  {\bf Step 3.} In a manner analogous to that seen in the second step, we find a
  collection $\calT_2$ of $m$ vertex-disjoint triangles such that each triangle
  in $\calT_2$ contains exactly one vertex from $W_3$ and vertices $x_i$ and
  $u_{i, 2}$, for every $i \in [m]$. Let us denote by $v_{i, 1}$ the third
  vertex in the triangle from $\calT_2$ which contains $x_i$ and $u_{i, 2}$.

  {\bf Step 4.} In the last step, we find a collection $\calT_3$ of $m$ vertex
  disjoint triangles such that each triangle in $\calT_3$ contains exactly one
  vertex from $W_4$ and vertices $x_i$ and $v_{i, 1}$, for every $i \in [m]$.
  Let us denote by $v_{i, 2}$ the third vertex in the triangle from $\calT_3$
  which contains $x_i$ and $v_{i, 1}$ and $\mathbf{v}_i = (v_{i, 1}, v_{i, 2})$.

  This completes the first part of the embedding scheme as we have constructed
  the vertex-disjoint collection $\{S_{x_i}\}_{i \in [m]}$ containing vertices
  from $X$.

  The rest of the proof consists of three consecutive applications of the
  Connecting Lemma (Lemma~\ref{lemma:connecting-lemma}). First, by applying it
  with $b = 2$, $W_5$ (as $W$), and $\{(\overline{\mathbf{u}}_i,
  \overline{\mathbf{v}}_i)\}_{i \in [m]}$ (as $\{(\bx_i, \by_i)\}_i$) we
  conclude that there exists a
  \[
    \big( \{(\overline{\mathbf{u}}_i, \overline{\mathbf{v}}_i)\}_{i \in [m]}, 2,
    4\log n \big)\text{-matching}
  \]
  in $W_5$. We can apply the Connecting Lemma as $|W_5| \ge (C/10) \log^4 n/p^2$
  and $m \log n \le \eps^6 |W_5|$. Let $\ell = \log n$ and let $g_i$ be the
  embedding of the backbone-path $B_{\ell}$ given by the above matching, for
  each $i \in [m]$. Next, consider the family of $4$-tuples $\{ (g_i(\bw_j^b),
  g_i(\bw_{j + 1}^a)) \}_{i \in [m], j \in [\ell - 1]}$. We apply the Connecting
  Lemma with $b = 1$, $W_6$ (as $W$), and $\{ (g_i(\bw_j^b), g_i(\bw_{j + 1}^a)
  )\}_{i \in [m], j \in [\ell - 1]}$ (as $\{(\bx_i, \by_i)\}_i$) to conclude
  that there exists a
  \[
    \big( \{(g_i(\bw_j^b), g_i(\bw_{j + 1}^a))\}_{i \in [m], j \in [\ell - 1]},
    1, 4\log n \big)\text{-matching}
  \]
  in $W_6$. We can do that as $|W_6| \ge (C/10) \log^4 n/p^2$ and $4 m\log^2 n
  \le \eps^6 |W_6|$. Let us denote pairs $g_i(\bw^a_1)$ and $g_i(\bw^b_{\ell}) $
  by $\mathbf{a}_i$ and $\mathbf{b}_i$, for every $i \in [m]$. By
  Claim~\ref{cl:single-vertex-absorber} we conclude that the set $X \cup W_1
  \cup \dotsb \cup W_6$ contains an $(\mathbf{a}_i, \mathbf{b}_i,
  \{x_i\})$-absorber $A_i$ for each $i \in [m]$, such that all $A_i$'s are
  pairwise vertex-disjoint.

  Lastly, using the vertices in $W_7$ we connect all $A_i$'s into a single
  absorber for the set $X$. Consider the family of $4$-tuples $\{(\mathbf{b}_i,
  \mathbf{a}_{i + 1})\}_{i \in [m - 1]}$. By applying the Connecting Lemma with
  $b = 1$, $W_7$ (as $W$), and $\{(\mathbf{b}_i, \mathbf{a}_{i + 1})\}_{i \in [m
  - 1]}$ (as $\{(\bx_i, \by_i)\}_i$), there exists an
  \[
    \big( \{(\mathbf{b}_i, \mathbf{a}_{i + 1})\}_{i \in [m - 1]}, 1, 4\log n
    \big)\text{-matching}
  \]
  in $W_7$. We can do that as $|W_7| \ge (C/10) \log^4 n/p^2$ and $m \log n \le
  \eps^6 |W_7|$. If we denote the square-paths connecting $\mathbf{b}_i$ to
  $\mathbf{a}_{i + 1}$ (given by the last application of the Connecting Lemma)
  by $Q_1, Q_2, \dotsc, Q_{m - 1}$, then Proposition~\ref{prop:embedding-square}
  implies that
  \[
    A = \bigcup_{i \in [m]} A_i \cup \bigcup_{i \in [m - 1]} Q_i
  \]
  is an $(\mathbf{a}_1, \mathbf{b}_m, X)$-absorber: consider some subset $X'
  \subseteq X$ and for each $i \in [m]$ let $P_i \subseteq A_i$ be the
  square-path from $\mathbf{a}_i$ to $\mathbf{b}_i$ which contains $x_i$ if and
  only if $x_i \notin X'$ and, moreover, contains all other vertices in $A_i$.
  Such a path exists as $A_i$ is an $(\mathbf{a}_i, \mathbf{b}_i,
  \{x_i\})$-absorber. Then
  \[
    \mathbf{a}_1 \xrsquigarrow{$P_1$} \mathbf{b}_1 \xrsquigarrow{$Q_1$}
    \mathbf{a}_2 \xrsquigarrow{$P_2$} \mathbf{b}_2 \xrsquigarrow{$Q_2$} \;
    \cdots \; \xrsquigarrow{$Q_{m - 1}$} \mathbf{a}_m \xrsquigarrow{$P_m$}
    \mathbf{b}_m
  \]
  is a square-path from $\mathbf{a}_1$ to $\mathbf{b}_m$ which contains all
  vertices in $A$ except those in $X'$. This concludes the proof of the lemma.
\end{proof}

\section{Proof of the Connecting Lemma}\label{sec:connecting-lemma}

In the remainder of the paper we give the proof of the Connecting Lemma. For the
convenience of the reader let us first restate the lemma.

\connectinglemma*

The proof relies on the following lemma whose proof we defer to the next
subsection. The idea behind it is that even after removal of a not too large
subset $X$ from the reservoir $W$, we can find a copy of a $(b, 4\log
n)$-connecting path, or in the phrasing of the lemma above---a `matching',
connecting at least one pair $\bx_i$ to the corresponding pair $\by_i$.

\begin{lemma}\label{lem:fractional-connecting}
  For every $b \in \{1, 2\}$ and every $\alpha > 0$, there exist positive
  constants $\eps := \eps(\alpha)$ and $C := C(\alpha)$ such that w.h.p.\ for a
  random graph $\Gamma \sim \Gnp$ every $G \in \calG(\Gamma, n, 2\alpha, p)$ has
  the following property.

  Let $s \geq C\log^4 n/p^2$ be an integer. Then there are at least $(1 -
  n^{-4})\binom{n}{s}$ subsets $W \subseteq V(\Gamma)$ of size $s$ satisfying
  the following. For every family of disjoint $4$-tuples $\{\bx_i, \by_i\}_{i
  \in [t]} \subseteq (V(G) \setminus W)^4$, such that $t \log n \le \eps^6 |W|$,
  and every subset $X \subseteq W$ of size $|X| \leq 8t\log n$, there exists an
  $i \in [t]$ and an $(\{ (\bx_i, \by_i) \}, b, 4\log n)$-matching in $W
  \setminus X$.
\end{lemma}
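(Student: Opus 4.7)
The approach is to grow, for each pair $i \in [t]$, a breadth-first edge-front from $\bx_i$ forward and from $\by_i$ backward through a layered decomposition of $W \setminus X$, to show that both fronts saturate at the central layer, and then to patch them into the required $(b, 4\log n)$-connecting-path.

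I would partition $W$ uniformly at random into $L = 4\log n$ layers $W_1, \ldots, W_L$ of equal size $\tilde n = |W|/L \geq \eps^{-22}\log^2 n/p^2$. Using the assumptions $|X| \leq 8t\log n$ and $t\log n \leq \eps^6|W|$ one obtains $|X| \leq 8\eps^6|W|$, so a Chernoff bound followed by a union bound over the $L$ blocks shows that with high probability every layer simultaneously satisfies $|X \cap W_j| \leq \eps^4 \tilde n$ and, by Corollary~\ref{cor:goodness-inheritance}, is $(\alpha, 2\eps, p)$-good. Fix such a partition for the rest of the argument.

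For each $i \in [t]$ I would inductively define a forward front $F_i^{(j)} \subseteq E_G(W_{j-1}, W_j)$: the collection of ordered pairs appearing as end-pairs of some $(b,\cdot)$-pseudo-path-prefix from $\bx_i$ running through $W_1, \ldots, W_j$ inside $W \setminus X$. Similarly grow a backward front $B_i^{(j)}$ rooted at $\by_i$ through the trailing layers. The growth of a single front is driven by the items of Lemma~\ref{lem:everything} applied to each triple $(W_{j-1}, W_j, W_{j+1})$ with $X \cap W_{j+1}$ playing the role of the forbidden subset: the initial expansion from $\bx_i$ is handled by property~\ref{p:good-codeg}; small fronts grow by a factor $\eps^{-4}$ per step via item~\ref{lemma:small-expand}; medium fronts grow by $(1+\alpha/4)$ via items~\ref{lemma:medium-dont-shrink} and~\ref{lemma:medium-expand}; and fronts of size at least $\eps \tilde n^2 p$ remain saturated from then on by items~\ref{lemma:large-to-all} and~\ref{lemma:dense-doesnt-drop}. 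After $O(\log n)$ layers both $F_i^{(\cdot)}$ and $B_i^{(\cdot)}$ reach density $\Omega(\tilde n^2 p)$ in their central layers. For $b=1$, a final common-neighbour argument in the central layer extends the forward front to meet the backward front and concatenates the two via Proposition~\ref{prop:embedding-square}; for $b = 2$ the same procedure produces two $(2, \cdot)$-pseudo-paths which combine into the required backbone-path via Proposition~\ref{prop:embedding-backbone}.

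The main obstacle is controlling the adversarial $X$ across the $L$ simultaneous applications of Lemma~\ref{lem:everything}, and it is handled entirely by the random layer-partition: the slack between $|X| \leq 8\eps^6|W|$ and the required per-layer bound $\eps^4 \tilde n$ is exactly what the Chernoff/union-bound step needs, provided $\eps$ is chosen small with respect to $\alpha$. A secondary technicality is the first step, where one endpoint of $\bx_i$ lies outside $W$ and does not fit the three-block setup of Lemma~\ref{lem:everything}; this is treated directly using the codegree property~\ref{p:good-codeg} of $W_1$, which yields $\alpha \tilde n p^2$ common neighbours and hence well exceeds the $\eps^4 \tilde n$ vertices that $X$ can remove. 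Note that the argument as described actually produces a connecting path for \emph{every} $i \in [t]$, which is stronger than the "at least one $i$" in the statement; the weaker form is precisely the condition that will later feed Lemma~\ref{lem:fractional-connecting} into Haxell's theorem (Theorem~\ref{thm:haxells-theorem}) when upgrading to the full Connecting Lemma.
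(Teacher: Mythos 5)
There is a genuine gap, and it sits exactly where your proposal claims to prove something \emph{stronger} than the lemma: the per-pair front-growing argument does not get off the ground for an individual pair, and the ``for every $i$'' conclusion is in fact false. Concretely, the first expansion step from a single starting edge $\bx_i$ produces only $O(\tilde np^2)$ candidate vertices in the first layer (this is what \ref{p:good-codeg} gives), whereas the adversarial set satisfies only $|X| \le 8t\log n \le 8\eps^6|W|$, so even after an adaptive random layering one must allow $|X\cap W_1| = \Theta(|X|/\log n)$, which can be as large as $\Theta(\eps^6\tilde n)$. Since $\tilde np^2 \ll \eps^6\tilde n$ throughout the relevant range of $p$, the set $X$ can contain the \emph{entire} first-step neighbourhood of a given pair (indeed of many pairs simultaneously), killing that front immediately. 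This obstruction is built into the machinery you invoke: item~\ref{lemma:star-matching} of Lemma~\ref{lem:everything}, which is the only tool available for growing a front that is still tiny, carries the precondition $|U| \ge |X|/\log n$, which a single starting edge cannot satisfy when $t$ is large. This is precisely why the lemma asserts only the existence of \emph{one} good index and why the Connecting Lemma is then derived via Haxell's theorem rather than a direct matching.

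The paper's proof circumvents this by growing the front (formalised as the projection graph) from \emph{all} $t' \ge t/3$ starting edges collectively, so that the initial front is a matching large enough to dominate $|X|/\log n$; it pushes this collective front up to density $\Theta(\tilde n^2p)$, and only \emph{then} isolates a single index by repeatedly partitioning $\calI$ into $\lceil\tilde n^{1/3}\rceil$ parts and pigeonholing on which part retains enough edges (Step~(3) of the proof of Claim~\ref{cl:one-reach-all}). A further point your proposal glosses over as a consequence of the false ``every $i$'' claim: one must find a \emph{single} index whose forward front from $\bx_i$ and backward front from $\oby_i$ are both dense at the middle layers. The paper gets this from a majority argument (more than $t/2$ indices succeed in each direction, so some $i^*$ succeeds in both), after which the two fronts, each occupying at least $\frac23\tilde n^2p$ of the at most $(1+\eps)\tilde n^2p$ edges between the two central layers (by \ref{p:unif-density}), must share an edge, which is where the two pseudo-paths are glued via Propositions~\ref{prop:embedding-square} and~\ref{prop:embedding-backbone}. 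Your layering, goodness/Chernoff preprocessing, and intended use of the items of Lemma~\ref{lem:everything} in the medium and dense regimes all match the paper; the missing ideas are the collective growth phase, the pigeonhole isolation of a single index, and the majority argument joining the two directions.
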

\begin{proof}[Proof of Lemma~\ref{lemma:connecting-lemma}]
  For given $\alpha$, let $\eps =
  \eps_{\ref{lem:fractional-connecting}}(\alpha/2)$ and $C =
  C_{\ref{lem:fractional-connecting}}(\eps, \alpha/2)$. Set $\ell = 4\log n$ and
  $\calI = \{(\bx_i, \by_i)\}_{i \in [t]}$. Let $W$ be one of the $(1 -
  n^{-4})\binoms{n}{s}$ subsets satisfying the conclusion of
  Lemma~\ref{lem:fractional-connecting}.

  We define an $\ell$-uniform hypergraph $\cH$ on the vertex set $\calI \cup W$
  whose edge set is defined as follows. For every $4$-tuple $(\bx_i, \by_i)$ and
  every set $Y \subseteq W$ of size $\ell - 4$, we add an edge $(\bx_i, \by_i)
  \cup Y$ if and only if $G$ contains a $(b, \ell)$-connecting-path connecting
  $\bx_i$ to $\by_i$ and its internal vertices belong to $Y$. Clearly, if there
  is an $\calI$-saturating matching in $\cH$, then there is an $(\calI, b,
  \ell)$-matching in $W$. We use Haxell's criteria
  (Theorem~\ref{thm:haxells-theorem}) in order to show this.

  Let $\calI' \subseteq \calI$ and $X \subseteq W$ be arbitrary subsets such
  that $|X| \leq 2|\calI'| \cdot \ell$. It is enough to show that for some
  $(\bx_i, \by_i) \in \calI'$ there is a $(b, \ell)$-connecting-path connecting
  $\bx_i$ to $\by_i$ whose internal vertices are completely contained in the set
  $W \setminus X$. This in turn implies that $\cH$ contains an edge intersecting
  $\calI'$ and not intersecting $X$ and the condition of
  Theorem~\ref{thm:haxells-theorem} is satisfied. Applying
  Lemma~\ref{lem:fractional-connecting} with $\calI'$ (as $\{(\bx_i, \by_i)\}_{i
  \in [t]}$) gives us exactly that. Namely, we may apply the lemma since
  $|\calI'|\log n \leq t\log n \leq \eps^6|W|$ and $|X| \leq 2|\calI'| \cdot
  \ell \leq 8t\log n$.
\end{proof}

\subsection{Proof of Lemma~\ref{lem:fractional-connecting}}

Let us set $\eps = \min\{ 1/2^{300}, \alpha^{10},
\eps_{\ref{prop:random-set-is-good}}(2\alpha) \}$, $C = 5\eps^{-27}$, and take
$m = 4 \log n - 4$. Let $G$ be an arbitrary member of $\calG(\Gamma, n, 2\alpha,
p)$. Let $s \geq C\log^4n/p^4$ and $\tilde n := s/(5\log n) \geq
\eps^{-27}\log^3n/p^2$. Let $W$ be a set of size $s$ chosen uniformly at random
among all such sets. As by Proposition~\ref{prop:random-set-is-good} w.h.p.\
$\Gamma$ is such that for every $G \in \calG(\Gamma, n, 2\alpha, p)$ at least
$(1-n^{-5})\binom{n}{\tilde n}$ sets of size $\tilde n$ are
$(\alpha,\eps,p)$-good, it follows by Chernoff's inequality for
hypergeometrically distributed random variables that with high probability at
least $(1-n^{-4})\binom{s}{\tilde n}$ subsets $W' \subseteq W$ of size $\tilde
n$ are $(\alpha,\eps,p)$-good. In particular, w.h.p.\ $\Gamma$ is such that for
every $G \in \calG(\Gamma, n, 2\alpha, p)$, there are at least
$(1-n^{-4})\binom{n}{s}$ sets $W \subseteq V(G)$ of size $s$ which in turn
contain at least $(1-n^{-4})\binom{s}{\tilde n}$ subsets of size $\tilde n$
which are $(\alpha,\eps,p)$-good. Condition on this event and let $W$ be one of
those $(1-n^{-4})\binom{n}{s}$ sets of size $s$.

Fix an arbitrary $X \subseteq W$ of size $|X| \leq 8t\log n$. We now show the
existence of disjoint subsets $W_1, \dotsc, W_m$ of $W$ which for all $i \in
[m]$ satisfy:
\begin{enumerate}[leftmargin=3em, font=\itshape, label=(W\arabic*)]
  \item\label{w:class-size-bound} $|W_i| = \tilde n$, for $\tilde n :=
    \left\lceil \frac{|W|}{5 \log n} \right\rceil \ge \frac{\eps^{-27} \log^3
    n}{p^2}$,
  \item\label{w:goodness} $W_i$ is $(\alpha, \eps, p)$-good, and
  \item\label{w:X-size-ub} $|X \cap W_i| \leq \eps^5 \tilde n$.
\end{enumerate}
Consider a uniformly at random chosen collection $W_1, \dotsc, W_m$ of disjoint
subsets of $W$ satisfying \ref{w:class-size-bound}. We claim that such a random
collection satisfies \ref{w:goodness} and \ref{w:X-size-ub} with positive
probability. First, as each $W_i$ is u.a.r.\ chosen from $W$ which contains
$(1-n^{-4})\binom{s}{\tilde n}$ subsets which are good, \ref{w:goodness} follows
the union bound over all $i \in [m]$. As for \ref{w:X-size-ub}, observe first
that $|X| \le 8t\log n \le 8\eps^{6} |W|$. If $|X| \le \eps^{-3}\log^2 n$ then
the claim holds vacuously. Otherwise another application of Chernoff's
inequality and the union bound implies that with probability at least
\[
  1 - 4\log n \cdot e^{-\eps^2 |X|/(6 \log n)} \ge 1 - 4\log n \cdot e^{- \log
  n/(6\eps)} \ge 1 - n^{-5},
\]
we have
\[
  |X \cap W_i| \le (1 + \eps) |X| \frac{\tilde n}{|W|} \le (1 + \eps) \cdot
  8\eps^{6} \tilde n \leq \eps^5 \tilde n,
\]
for every $i \in [m]$. As $\max\{\eps^{-3} \log^2 n, \eps^5 \tilde n\} = \eps^5
\tilde n$, it holds that $|X \cap W_i| \le \eps^5 \tilde n$. From now on we thus
assume that we have disjoint subsets $W_1, \dotsc, W_{m}$ that satisfy
\ref{w:class-size-bound}--\ref{w:X-size-ub}. For convenience, we also set $X_i
:= X \cap W_i$ and $\tilde W_i := W_i \setminus X_i$, for every $i \in [m]$.

The goal of the remainder of the proof is to show that there exists an embedding
$g$ of a $(b, m + 4)$-pseudo-path connecting $\bx_i$ to $\by_i$, for some $i \in
[t]$. In order to do this, we first introduce a couple of definitions. Let $f
\colon \{0, \dotsc, m\} \to \Z$ be a function defined as:
\[
  f(i) =
    \begin{cases}
      i - 1, & \text{if $i$ is even}, \\
      i - b, & \text{otherwise}. \\
    \end{cases}
\]

Note that this function can be used to describe the left neighbour other than
$u_{i - 1}$ of a vertex $u_i$, $i \ge 3$, in a $(b,\ell)$-pseudo-path. Indeed,
the two left neighbours are $u_{f(i - 1)}$ and $u_{i - 1}$. Next, we define a
graph that is the union of all $(b, \ell)$-pseudo-paths that start in a set of
given edges.

\begin{definition}[{\bf Projection graph}]
  Let $\pi \colon [m] \to [m]$ be a permutation of the set $[m]$ and let
  $\{(a_i, b_i)\}_{i \in [t]} \subseteq (V(G) \setminus W)^2$, denoted by
  $\calI$, be a set of $t$ disjoint ordered pairs. We define an \emph{$(\calI,
  \pi)$-projection graph $F$} on the vertex set
  \[
    V(F) = W_{-1} \cup W_0 \cup W_{\pi(1)} \cup \dotsb \cup W_{\pi(m)},
  \]
  where $W_{-1} = \{a_1, \dotsc, a_t\}$ and $W_0 = \{b_1, \dotsc, b_t\}$. The
  edge set of $F$ is defined inductively as follows. Let $E_0$ be the edges
  between the sets $W_{-1}$ and $W_0$, i.e.\ all edges in $\bigcup_{i \in [t]}
  \{a_i, b_i\}$. Then for all $j \in [m]$ we let
  \begin{align*}
    E_j := \big\{ \{u, v\}, \{w, v\} : v \in \tilde W_{\pi(j)} \text{ and }
    \{u, w\} \in E_{j - 1}(W_{\pi(f(j - 1))}, W_{\pi(j - 1)}), \{w, v\}, \{u,
    v\} \in E \big\}.
  \end{align*}
  Lastly, we set the edge set of $F$ to $E_0 \cup \dotsb \cup E_m$
  (Figure~\ref{fig:fgraph}).
\end{definition}

\begin{figure}[!htbp]
  \centering
  \includegraphics[scale=0.6]{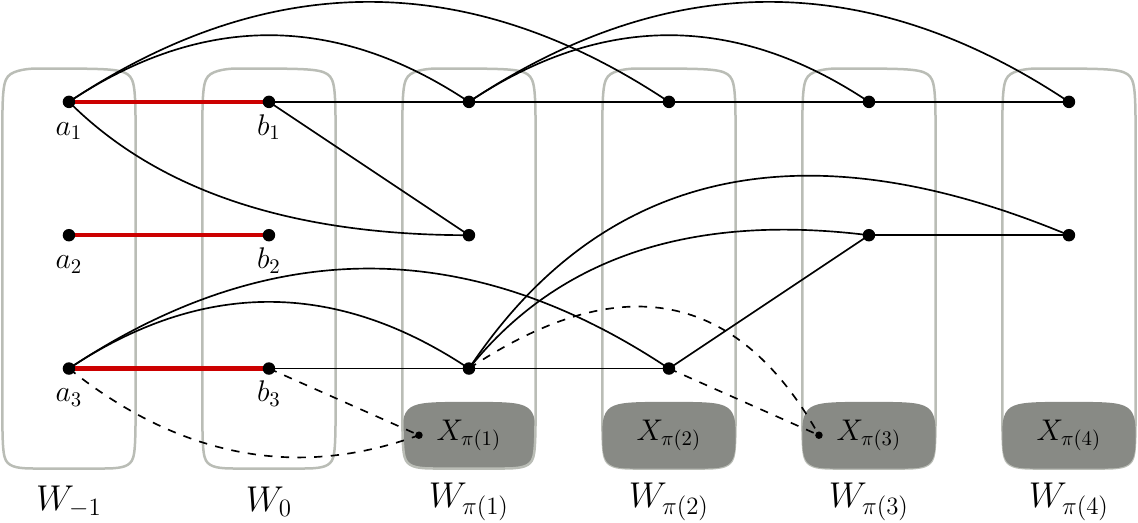}
  \caption{An example of a projection graph $F$ where $m = 4$ and $b = 2$.
  Dashed edges do not belong to the projection graph $F$.}
  \label{fig:fgraph}
\end{figure}

To understand this definition, observe that $E_{j - 1}$ are exactly those edges
for which an edge expansion into $\tilde W_{\pi(j)}$ `extends' the pseudo-path
constructed so far by a vertex from $\tilde W_{\pi(j)}$. Crucially, even though
the vertex set of $F$ is defined as the union of the sets $W_i$, the edge set
consists {\em only} of edges that run between $\tilde W_i$'s.

The next proposition thus follows immediately.

\begin{proposition}\label{prop:proj-graph-embedding}
  Let $b \in \{1, 2\}$, let $\pi \colon [m] \to [m]$ be a permutation of the set
  $[m]$, and let $\{(a_i, b_i)\}_{i \in [t]} \subseteq (V(G) \setminus W)^2$,
  denoted by $\calI$, be a family of $t$ disjoint ordered pairs. Furthermore,
  let $F$ be an $(\calI, \pi)$-projection graph. Then for each $j$, where $1 \le
  j \le m /2$, and each edge $\{v, w\} \in E_F(W_{\pi(2j - 1)}, W_{\pi(2j)})$,
  there exists an $i \in [t]$ and an embedding of a $(b, 2j + 2)$-pseudo-path in
  $F$ connecting $(a_i, b_i)$ to $(v, w)$ that contains exactly one vertex from
  each set $W_i$, $i \in [m]$, and no vertex from the set $X$. \qed
\end{proposition}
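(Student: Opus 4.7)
The plan is to prove the proposition by strong induction on $k \in \{0, 1, \ldots, m\}$, establishing the auxiliary claim $P(k)$: for every edge $\{u, v\} \in E_k(W_{\pi(f(k))}, W_{\pi(k)})$ with $u \in W_{\pi(f(k))}$ and $v \in W_{\pi(k)}$, there exist an index $i \in [t]$ and an embedding $g : V(S^b_{k+2}) \to V(F)$ of the $(b, k+2)$-pseudo-path with $g(u_1) = a_i$, $g(u_2) = b_i$, $g(u_{k+2}) = v$, $g(u_{f(k)+2}) = u$, and $g(u_{\ell+2}) \in \tilde W_{\pi(\ell)}$ for every $\ell \in [k]$ (using the convention $\pi(0) = 0$, $\pi(-1) = -1$, so that $W_{\pi(0)} = W_0$ and $W_{\pi(-1)} = W_{-1}$). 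Since $\tilde W_{\pi(\ell)} = W_{\pi(\ell)} \setminus X$, the image then automatically avoids $X$. The proposition is the special case $k = 2j$: as $2j$ is even, $f(2j) = 2j-1$, and edges added to $E_{k'}$ for $k' > 2j$ are incident to $\tilde W_{\pi(k')}$ (disjoint from $W_{\pi(2j-1)} \cup W_{\pi(2j)}$), so $E_F(W_{\pi(2j-1)}, W_{\pi(2j)}) = E_{2j}(W_{\pi(f(2j))}, W_{\pi(2j)})$.

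The base case $k = 0$ is immediate because $E_0 = \{\{a_i, b_i\} : i \in [t]\}$ and the required $(b, 2)$-pseudo-path is the edge itself. For the inductive step, pick $\{u, v\} \in E_k(W_{\pi(f(k))}, W_{\pi(k)})$. Unwinding the definition of $E_k$ produces an edge $\{r, s\} \in E_{k-1}(W_{\pi(f(k-1))}, W_{\pi(k-1)})$ with $r \in W_{\pi(f(k-1))}$, $s \in W_{\pi(k-1)}$, and a vertex $v \in \tilde W_{\pi(k)}$ such that both $\{r, v\}$ and $\{s, v\}$ are edges of $F$. Applying $P(k-1)$ to $\{r, s\}$ gives an embedding $g'$ of $S^b_{k+1}$, which I extend to $g$ on $S^b_{k+2}$ by setting $g(u_{k+2}) := v$. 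The only edges of $S^b_{k+2}$ absent from $S^b_{k+1}$ are the two edges incident to $u_{k+2}$, namely $\{u_{k+1}, u_{k+2}\}$ and $\{u_{f(k+1)}, u_{k+2}\}$; these map to $\{s, v\}$ and $\{r, v\}$ respectively, both of which are in $E(F)$. Injectivity is automatic because the sets $W_{-1}, W_0, W_{\pi(1)}, \ldots, W_{\pi(k)}$ are pairwise disjoint.

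The induction rests on two easy-to-verify identities that form the only mildly delicate part of the argument. First, $f(k+1) = f(k-1) + 2$, checked directly in each of the two parity subcases (since $k+1$ and $k-1$ share parity, the defining formula of $f$ gives the same offset); this guarantees that the vertex $g'(u_{f(k-1)+2}) = r$ is precisely the image required at the ``other'' left neighbour $u_{f(k+1)}$ of $u_{k+2}$. Second, the identification $u_{f(k)+2} \in \{r, s\}$ demanded by $P(k)$: when $k$ is even, or $k$ is odd with $b = 1$, one has $f(k) = k-1$ and hence $u_{f(k)+2} = u_{k+1}$, which is $s$; when $k$ is odd and $b = 2$, one has $f(k) = k-2 = f(k-1)$ and hence $u_{f(k)+2} = u_k$, which is mapped to $r$ by $P(k-1)$. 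No probabilistic input is needed at this stage; the proof is purely a structural check that tracks the shift-by-two indexing between the inductive layers of $F$ and the vertices of $S^b_\ell$. (I read the phrasing ``each set $W_i$, $i \in [m]$'' in the proposition's conclusion as shorthand for the $2j$ sets $W_{\pi(1)}, \ldots, W_{\pi(2j)}$ actually visited by the pseudo-path, together with $W_{-1}$ and $W_0$.)
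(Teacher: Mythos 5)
Your induction is correct and is exactly the bookkeeping the paper leaves implicit: the paper states the proposition with no proof ("follows immediately" from the inductive definition of the projection graph), and your strong induction on $k$, with the index identities $f(k+1)=f(k-1)+2$ and the case analysis identifying $u_{f(k)+2}$ with $r$ or $s$, is the intended formalization. Your reading of "exactly one vertex from each set $W_i$, $i\in[m]$" as referring only to the $2j$ sets actually visited is also the right interpretation of the (loosely worded) statement, as the later applications confirm.
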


The following claim is the main tool in the proof of
Lemma~\ref{lem:fractional-connecting}. The proof of the claim is technical and
quite involved and thus it is presented in the next section. In the remainder of
this section, we show how the claim implies
Lemma~\ref{lem:fractional-connecting}.

\begin{claim}\label{cl:one-reach-all}
  Let $t' \in \N$ be such that $t/3 \le t' \le t$. Let $\pi \colon [m] \to [m]$
  be a permutation of the set $[m]$ and let $\{(a_i, b_i)\}_{i \in [t']}
  \subseteq (V(G) \setminus W)^2$, denoted by $\calI$, be a set of $t'$ disjoint
  edges in $G$. Then there exists $(a_i, b_i) \in \calI$ such that
  \[
    e_F(W_{\pi(m/2 - 1)}, W_{\pi(m/2)}) \ge \frac{2}{3} \tilde n^2 p \qquad
    \text{and} \qquad e_{F}(W_{\pi(m/2 + 1)}, W_{\pi(m/2 + 2)}) \ge \frac{2}{3}
    \tilde n^2 p,
  \]
  where $F$ is the $(\{(a_i, b_i)\}, \pi)$-projection graph.
\end{claim}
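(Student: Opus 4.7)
The plan is to fix an arbitrary edge $(a, b) \in \calI$, form the $(\{(a, b)\}, \pi)$-projection graph $F$ from it, and iteratively apply Lemma~\ref{lem:everything} to track the growth of the level sets $E_j$ across $j = 0, 1, \ldots, m/2 + 2$. At each step $j \to j + 1$ we identify $W_1 = W_{\pi(f(j))}$, $W_2 = W_{\pi(j)}$, $W_3 = W_{\pi(j+1)}$, $X' := X \cap W_{\pi(j+1)}$ (of size at most $\eps^5 \tilde n$ by \ref{w:X-size-ub}, well inside the $\eps^4 \tilde n$ tolerance of Lemma~\ref{lem:everything}), and $F_{12} := E_j \cap E_G(W_1, W_2)$, so that the resulting $F_{23}$ sits inside $E_{j+1}$ between $W_2$ and $W_3$. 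The codegree property \ref{p:good-codeg} already ensures $|E_1| \ge 2\alpha \tilde n p^2$ regardless of which edge we start from, so any choice of $(a, b) \in \calI$ launches the projection successfully.

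The growth of $|E_j|$ is analysed in three phases. In the \emph{very small} phase, starting from $|E_0| = 1$, repeated applications of Lemma~\ref{lem:everything}~$\ref{lemma:star-matching}$ provide an $(\alpha \tilde n p^2 / 2)$-star-matching saturating a $(1 - \eps)$-fraction of the current vertex set, effectively multiplying the edge count by a factor of $\Theta(\alpha \tilde n p^2) \gtrsim \log^3 n$ per step, so $O(\log n / \log \log n)$ steps suffice to cross the threshold $|E_j| \ge \eps^{-17} \tilde n \log^2 n$. In the \emph{small/medium} phase, Lemma~\ref{lem:everything}~$\ref{lemma:small-expand}$ and Lemma~\ref{lem:everything}~$\ref{lemma:medium-expand}$ deliver per-step multiplicative growth of $\eps^{-4}$ and $(1 + \alpha/4)$ respectively; whenever $F_{12}$ mixes vertex-degree regimes, the dichotomies Lemma~\ref{lem:everything}~$\ref{lemma:sparse-expand-or}$ and Lemma~\ref{lem:everything}~$\ref{lemma:dense-expand-or}$ let us restrict to the relevant subset first. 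After another $O(\log n)$ steps the count exceeds $\eps \tilde n^2 p$, entering the \emph{dense} phase. Here Lemma~\ref{lem:everything}~$\ref{lemma:large-stay-large}$ produces a set $U \subseteq W_{\pi(j)}$ with $|U| \ge (1 - \eps)\tilde n$ and $F_{12}$-back-degree at least $\tilde n p / 3$, after which Lemma~\ref{lem:everything}~$\ref{lemma:large-to-all}$ yields
\[
  e_F(W_{\pi(j)}, W_{\pi(j+1)}) \ge (1 - \eps^2) e_G(U, W_3) \ge (1 - \eps^2)(1 - \eps)(2/3 + \alpha) \tilde n^2 p \ge \tfrac{2}{3} \tilde n^2 p.
\]
Finally, Lemma~\ref{lem:everything}~$\ref{lemma:dense-doesnt-drop}$ limits the per-step loss to a factor of $(1 - \sqrt{\eps})$, so the $\tfrac{2}{3}\tilde n^2 p$ bound still holds two steps later at $j = m/2 + 2$, as required. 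Since $m/2 = 2 \log n - 2$ comfortably accommodates the $O(\log n)$ steps needed, both inequalities of the claim follow.

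\textbf{Main obstacle.} The main technical challenge is carefully verifying the hypotheses of the relevant sub-lemma of Lemma~\ref{lem:everything} at every step. When $F_{12}$ contains vertices from several degree regimes, one must first invoke the dichotomies Lemma~\ref{lem:everything}~$\ref{lemma:sparse-expand-or}$ and Lemma~\ref{lem:everything}~$\ref{lemma:dense-expand-or}$ to extract a subset of $F_{12}$ lying entirely in one regime, and only then apply the corresponding growth lemma; the bookkeeping at the transitions between phases (where $|F_{12}|$ sits just above or below a threshold) is tedious but mechanical. Avoiding the forbidden set $X$ at every level poses no additional difficulty, as the budget $|X_{\pi(j)}| \le \eps^5 \tilde n$ from \ref{w:X-size-ub} is strictly below the $\eps^4 \tilde n$ tolerance already built into Lemma~\ref{lem:everything}.
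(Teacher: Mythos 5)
There is a genuine gap, and it is the central idea of the claim. You begin by fixing an \emph{arbitrary} edge $(a,b)\in\calI$ and growing its projection graph from $|E_0|=1$. But the claim is deliberately only existential, and for good reason: the forbidden set $X$ scales with the whole family ($|X|\le 8t\log n$, with up to $\eps^5\tilde n$ of it sitting in each $W_{\pi(j)}$), so it can be far larger than anything a single edge reaches in its first few levels. Concretely, \ref{p:good-codeg} only guarantees $|N_G(a,W_{\pi(1)})\cap N_G(b,W_{\pi(1)})|\ge\alpha\tilde np^2$, and for small $p$ this is much smaller than $\eps^5\tilde n$, so $X_{\pi(1)}$ may contain that entire common neighbourhood and kill the single-edge projection at level~$1$. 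The same problem appears in your use of Lemma~\ref{lem:everything}~\ref{lemma:star-matching}, whose hypothesis $|U|\ge|X|/\log n$ fails badly when $|U|=2$ but $t$ is large. So the "very small phase" of your plan cannot be executed edge-by-edge.

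The paper's proof circumvents this by running the expansion for the \emph{entire} family $\calI$ at once (so $|U|\approx 2t'$ is comparable to $|X|/\log n$ and Lemma~\ref{lem:everything}~\ref{lemma:star-matching} applies), growing $e_F$ to $2\eps\tilde n^2p$ by level $m/40$, and only then isolating a single edge via a pigeonhole argument: partition $\calI$ into $\lceil\tilde n^{1/3}\rceil$ parts, note that one part's projection graph retains at least a $\tilde n^{-1/3}$-fraction of the edges (still above the $\eps^{-21}\tilde n\log^2 n$ threshold), regrow, and iterate a constant number of times until a single edge remains. This pigeonhole step is absent from your proposal and is the key idea you are missing. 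A secondary issue: in the dense phase you assert that Lemma~\ref{lem:everything}~\ref{lemma:large-stay-large} "produces" a set of back-degree $\ge\tilde np/3$ from $|F_{12}|\ge\eps\tilde n^2p$, but that lemma takes such a set as a hypothesis; obtaining it requires the expanding/non-expanding block dichotomy (Claim~\ref{claim:how-blocks-can-grow}~\ref{claim:dense-blocks}) — either blocks keep multiplying the edge count by $(1+2\sqrt\eps)$ (which cannot persist, since the total edge count is bounded by \ref{p:unif-density}), or a non-expanding block triggers Lemma~\ref{lem:everything}~\ref{lemma:dense-expand-or} and lifts the count to $(2/3+\alpha/2)\tilde n^2p$.
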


Having the previous claim at hand, we finish the proof of
Lemma~\ref{lem:fractional-connecting}. Let $\pi_1$ be the identity permutation
of the set $[m]$ and let $\pi_2$ be a permutation of $[m]$ defined as $\pi_2(i)
= m - i + 1$. Let $\calI_x$ be the largest subset of $\{\bx_i\}_{i \in [t]}$
such that for every $\bx_i \in \calI_x$ it holds that
\[
  e_{F'}(W_{m/2 - 1}, W_{m/2}) \ge \frac{2}{3} \tilde n^2 p,
\]
where $F'$ is the $(\calI_x, \pi_1)$-projection graph. If $|\calI_x| \le t/2$
then by applying Claim~\ref{cl:one-reach-all} with $\{\bx_i\}_{i \in [t]}
\setminus \calI_x$ (as $\calI$) and $\pi_1$ (as $\pi$) we get a contradiction
with the maximality of $\calI_x$. Thus $|\calI_x| > t/2$. Similarly, let
$\calI_y$ be the largest subset of $\{\oby_i\}_{i \in [t]}$ such that for every
$\oby_i \in \calI_y$ it holds that
\[
  e_{F''}(W_{\pi_2(m/2 + 1)}, W_{\pi_2(m/2 + 2)}) \ge \frac{2}{3}\tilde n^2p,
\]
where $F''$ is the $(\calI_y, \pi_2)$-projection graph. As in the case of
$\calI_x$ it must be that $|\calI_y| > t /2$. The fact that both $\calI_x$ and
$\calI_y$ are larger than $t/2$ implies that there must be a single integer
$i^\star \in [t]$ such that
\begin{equation}\label{eq:joining}
  e_{F_x}(W_{m/2 - 1}, W_{m/2}) \ge \frac{2}{3} \tilde n^2p \qquad \text{and}
  \qquad e_{F_y}(W_{\pi_2(m/2 + 1)}, W_{\pi_2(m/2 + 2)}) \ge \frac{2}{3} \tilde
  n^2p,
\end{equation}
where $F_x$ and $F_y$ are the $(\{\bx_{i^\star}\}, \pi_1)$-projection graph and
the $(\{\oby_{i^\star}\}, \pi_2)$-projection graph, respectively.

Let $\ell_1 = m/2$. Note that $\{\pi_2(m - \ell_1 + 2) , \pi_2(m - \ell_1 + 1)\}
= \{\ell_1 - 1, \ell_1\}$. Hence, from \eqref{eq:joining} we have
\[
  e_{F_x}(W_{\ell_1 - 1}, W_{\ell_1}) \ge \frac{2}{3} \tilde n^2 p \qquad
  \text{and} \qquad e_{F_y}(W_{\ell_1 - 1}, W_{\ell_1}) \ge \frac{2}{3} \tilde
  n^2 p.
\]
This implies, by \ref{p:unif-density}, that there must exist an edge $e = \{u,
v\}$ such that
\[
  e \in E_{F_x}(W_{\ell_1 - 1}, W_{\ell_1}) \cap E_{F_y}(W_{\ell_1 -1},
  W_{\ell_1}).
\]

By Proposition~\ref{prop:proj-graph-embedding} and the definitions of $\pi_1$
and $\pi_2$ we get that there exist two embeddings $g_1$ and $g_2$ of a $(b,
\ell_1 + 2)$-pseudo-path and a $(b, \ell_1 + 4)$-pseudo-path such that:
\begin{itemize}
  \item $(g_1(u_1), g_1(u_2)) = \bx_{i^\star}$ and $(g_2(u_1), g_2(u_2)) =
    \oby_{i^\star}$,
  \item $g_1(u_i) \in W_{i - 2}$, for every $i \in \{3, \dotsc, \ell_1\}$,
  \item $g_2(u_i) \in W_{m - i + 3}$, for every $i \in \{3, \dotsc, \ell_1 +
    2\}$,
  \item $g_1(u_{\ell_1 + 1}) = u$, $g_1(u_{\ell_1 + 2}) = v$, and
  \item $(g_2(u_{\ell_1 + 3}), g_2(u_{\ell_1 + 4})) =
    \begin{cases}
      (v, u), & \text{if } b = 1, \\
      (u, v), & \text{if } b = 2.
    \end{cases}$
\end{itemize}
Using Proposition~\ref{prop:embedding-square} and
Propositions~\ref{prop:embedding-backbone} we conclude that there exists an
$(\{(\bx_i, \by_i)\}, b, m + 4)$-matching in $W \setminus X$, as desired. This
concludes the proof of Lemma~\ref{lem:fractional-connecting}. It remains to
prove Claim~\ref{cl:one-reach-all}.

\subsection{Proof of Claim~\ref{cl:one-reach-all}}\label{sec:main-claim}

In this subsection we give the proof of Claim~\ref{cl:one-reach-all}. As the
choice of the permutation $\pi$ does not play a role in the proof, we assume
$\pi$ is the identity permutation and we completely omit it from the definition
of the projection graph. Thus, throughout the section when we write
$\calI$-projection graph we mean $(\calI, \pi)$-projection graph, where $\pi$ is
the identity permutation. For a projection graph $F$, we refer to a pair of
bipartite graphs $F[W_{f(i)}, W_i]$ and $F[W_{f(i + 1)}, W_{i + 1}]$ as the {\em
$i$-th step of $F$}. Next, we introduce some terminology and define when a step
is {\em expanding} or {\em non-expanding}.

\begin{definition}
  Let $F$ be a projection graph. Let $C \ge 1$ be a real number and let $i \ge
  1$. We say that the $i$-th step of $F$ is $C$-{\em expanding} if
  \[
    e_F(W_{f(i + 1)}, W_{i + 1}) \ge C \cdot e_F(W_{f(i)}, W_{i}).
  \]
  Otherwise, it is $C$-{\em non-expanding}.
\end{definition}

Due to the asymmetry in the definition of pseudo-paths (for $b = 2$), it is
easier to not consider every step, but to group steps into {\em blocks of two}.

\begin{definition}
  Let $F$ be a projection graph. Let $C \ge 1$ be a real number and let $i \ge
  1$. We say that the $i$-th block is $C$-{\em expanding} in $F$, if at least
  one of the $2i$-th and the $(2i + 1)$-st step of $F$ are $C$-expanding.
  Otherwise, it is $C$-{\em non-expanding}.
\end{definition}

To understand how the two definitions relate, it is helpful to observe that for
the edges between blocks, i.e.\ the red edges in Figure~\ref{fig:block}, we have
$(W_{2i - 1}, W_{2i}) = (W_{f(2i)}, W_{2i})$, by the definition of~$f$.

\begin{figure}[!htbp]
  \centering
  \includegraphics[scale=0.475]{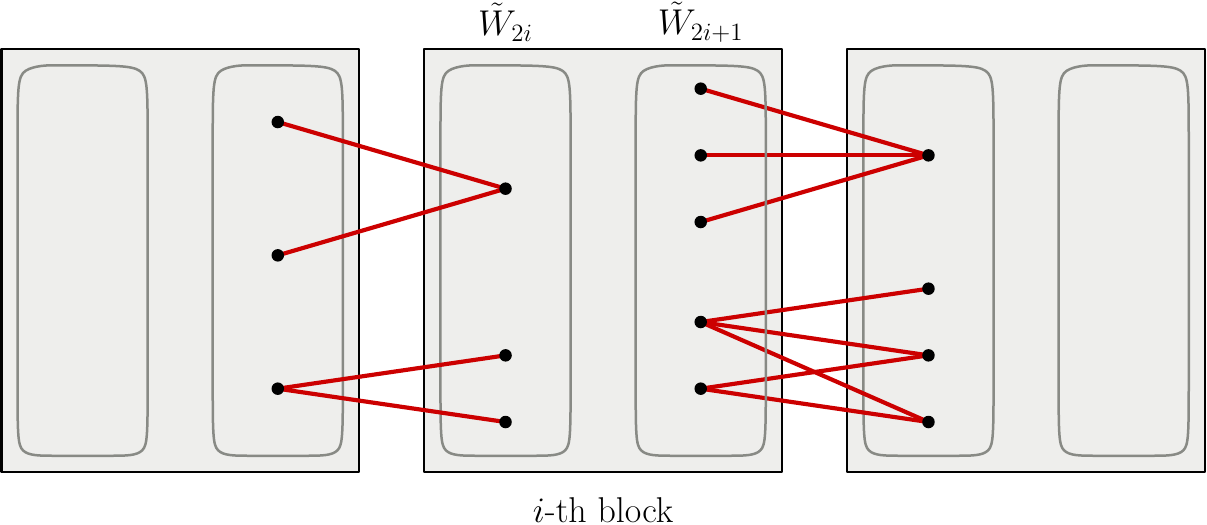}
  \caption{If $i$-th block is expanding, then the right set of red edges is
  larger than the left set of red edges. (Note: this expansion does not follow
  from the definition, but requires some proofs.)}
  \label{fig:block}
\end{figure}

The intuition behind expanding blocks is that the number of edges in $F$ `after'
an expanding block should be larger than the number of edges in $F$ `before' it
(see Figure~\ref{fig:block}). The following claim makes this precise.

\begin{claim}\label{claim:how-blocks-can-grow}
  Let $F$ be a projection graph and $1 \leq i \leq m/2 - 2$.
  \begin{enumerate}[(i)]
    \item\label{claim:sparse-blocks} Suppose $e_F(W_{2i - 1}, W_{2i}) \ge
      \eps^{-21} \tilde n \log^2 n$. If the $i$-th block is
      $(1/\eps)$-expanding, then
      \[
        e_F(W_{2i + 1}, W_{2i + 2}) \ge (1/\sqrt\eps) \cdot e_F(W_{2i - 1},
        W_{2i}),
      \]
      and otherwise
      \[
        e_F(W_{2i + 1}, W_{2i + 2}) \ge \frac{\alpha^2}{256} \tilde n^2 p.
      \]
    \item\label{claim:dense-blocks} Suppose $e_F(W_{2i - 1}, W_{2i}) \ge 2\eps
      \tilde n^2p$. If the $i$-th block is $(1 + 4\sqrt\eps)$-expanding, then
      \[
        e_F(W_{2i + 1}, W_{2i + 2}) \ge (1 + 2\sqrt\eps) \cdot e_F(W_{2i - 1},
        W_{2i}),
      \]
      and otherwise
      \[
        e_F(W_{2i + 3}, W_{2i + 4}) \ge (2/3 + \alpha/2 )\tilde n^2p.
      \]
  \end{enumerate}
\end{claim}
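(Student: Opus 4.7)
My plan is to prove the two parts by cascading appropriate cases of Lemma~\ref{lem:everything} across the two steps of block $i$ (and, for part~(ii)'s non-expanding branch, into block $i+1$). Throughout, for a step with input edges $F_{12}$, I set $W_1, W_2, W_3$ so that $F_{12}$ runs between the "old" set $W_1$ (the extending side) and $W_2$, and $F_{23}$ lives between $W_2$ and the new set $W_3$; the correct assignment depends on $b \in \{1,2\}$ via the function $f$.

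For part~(i), set $N := e_F(W_{2i-1}, W_{2i})$ and first prove a \emph{no-collapse} sub-lemma for a single sparse-regime step (input $\ge \eps^{-18}\tilde n\log^2 n$): the next level carries at least $(\alpha/8)|F_{12}|$ edges. Indeed, Lemma~\ref{lem:everything}~\ref{lemma:sparse-expand-or} yields either an $\eps^{-3}$-factor expansion or a subset $L\subseteq W_2$ absorbing $(1-\eps)|F_{12}|$ edges with every $v\in L$ satisfying $\deg_{F_{12}}(v,W_1)\ge \eps^{-4}\log n/p$; I then apply Lemma~\ref{lem:everything}~\ref{lemma:medium-dont-shrink} with $U=L$ to obtain $e_{F_{23}}(L,\tilde W_3)\ge (\alpha\tilde np/4)|L|$, and use $|L|\ge |F_{12}|/(2\tilde np)$ from \ref{p:unif-deg}. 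With this in hand, the expanding-block case is immediate: one of the two steps expands by a factor $\ge 1/\eps$, the other loses at most a factor $\alpha/8$, and $\alpha/(8\eps)\ge 1/\sqrt\eps$ for $\eps$ small relative to $\alpha$.

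The non-expanding case of part~(i) is the main obstacle, since the target bound $\alpha^2/256\cdot\tilde n^2p$ is an absolute threshold far larger than any constant multiple of the input $N$. Since both steps avoid even $\eps^{-3}$-expansion, Lemma~\ref{lem:everything}~\ref{lemma:sparse-expand-or} returns structural subsets in both. I plan to chase the induced degree concentration through the triangle-extensions, splitting $L$ into its medium-degree part (where Lemma~\ref{lem:everything}~\ref{lemma:medium-expand} gives $(1+\alpha/4)$-expansion) and its large-degree part (where Lemma~\ref{lem:everything}~\ref{lemma:large-to-all} gives near no-shrinkage \emph{and} produces a $(1-3\eps^3)$-subset of near-full $F_{23}$-degree). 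Iterating this analysis across step $2i+1$ puts us into the hypothesis of Lemma~\ref{lem:everything}~\ref{lemma:large-stay-large} at step $2i+1$, which then forces $\ge(1-\eps)\tilde n$ vertices of $W_{2i+2}$ with degree $\ge(1/3+\alpha/2)\tilde np$ into $W_{2i+1}$, giving the required $\Theta(\tilde n^2p)$ bound. The delicate bookkeeping is that the "$W_2$-side" of step $2i$ is the "$W_1$-side" of step $2i+1$, so one must carry high-degree information across bipartitions via \ref{lemma:large-to-all}.

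For part~(ii), if the block is $(1+4\sqrt\eps)$-expanding, combine the expansion of (WLOG) step $2i$ with Lemma~\ref{lem:everything}~\ref{lemma:dense-doesnt-drop} on step $2i+1$ (which is applicable since $|F_{12}|$ stays above $\eps\tilde n^2p$); as $(1+4\sqrt\eps)(1-\sqrt\eps)\ge 1+2\sqrt\eps$, the conclusion is immediate. If not, I apply Lemma~\ref{lem:everything}~\ref{lemma:dense-expand-or} with $\mu=32\sqrt\eps/\alpha$ to step $2i$, producing $L\subseteq W_2$ of vertices of $F_{12}$-degree $>\tilde np/3$ absorbing a $(1-32\sqrt\eps/\alpha)$-fraction of $N$. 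Feeding $L$ into Lemma~\ref{lem:everything}~\ref{lemma:large-to-all} yields a $(1-3\eps^3)$-subset with near-full $F_{23}$-degree; iterating the same pair \ref{lemma:dense-expand-or}+\ref{lemma:large-to-all} on step $2i+1$ (also not $(1+4\sqrt\eps)$-expanding by hypothesis) propagates a large subset of high $F$-degree vertices to level $2i+2$. Both steps of block $i+1$ then meet the hypothesis of Lemma~\ref{lem:everything}~\ref{lemma:large-stay-large}, which first delivers $\ge(1-\eps)\tilde n$ vertices of $W_{2i+3}$ with degree $\ge(1/3+\alpha/2)\tilde np$ into $W_{2i+2}$, and after one more application at step $2i+3$, $\ge(1-\eps)\tilde n$ vertices of $W_{2i+4}$ of comparable degree into $W_{2i+3}$, giving $e_F(W_{2i+3},W_{2i+4})\ge (2/3+\alpha/2)\tilde n^2p$ after absorbing the $\eps$-slack into $\alpha/2$.
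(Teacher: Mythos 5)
Your treatment of the two expanding cases and of part (ii)'s non-expanding case follows the paper's route closely: the "no-collapse" sub-lemma for a single sparse step (via Lemma~\ref{lem:everything}~\ref{lemma:sparse-expand-or} followed by \ref{lemma:medium-dont-shrink}) is exactly the paper's mechanism, as is the combination of one $(1+4\sqrt\eps)$-expanding step with \ref{lemma:dense-doesnt-drop}, and the chain \ref{lemma:dense-expand-or} $\to$ \ref{lemma:large-to-all} $\to$ \ref{lemma:large-stay-large} across blocks $i$ and $i+1$ in the dense regime.

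There is, however, a genuine gap in the non-expanding case of part~(i). Your plan is to split the set $L$ returned by \ref{lemma:sparse-expand-or} into a medium-degree part (handled by \ref{lemma:medium-expand}) and a large-degree part (handled by \ref{lemma:large-to-all}), and to iterate until you can invoke \ref{lemma:large-stay-large}. But in part~(i) the only hypothesis is $e_F(W_{2i-1},W_{2i}) \ge \eps^{-21}\tilde n\log^2 n$, and since the block is $(1/\eps)$-non-expanding the edge counts throughout the block stay below $\eps^{-2}\cdot e_F(W_{2i-1},W_{2i})$, which can be far smaller than $\tilde n^2 p$ (indeed $\tilde n^2 p \ge \eps^{-27}\tilde n\log^3 n/p \gg \eps^{-23}\tilde n\log^2 n$). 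Consequently the set of vertices with $F$-degree $\ge \tilde np/3$ may be empty, \ref{lemma:large-to-all} and \ref{lemma:large-stay-large} are never applicable (the latter needs $(2/3)\tilde n$ vertices each of degree $\ge\tilde np/3$, i.e.\ $\Omega(\tilde n^2p)$ edges), and \ref{lemma:medium-expand} needs $|F_{12}|\ge\eps^{-5}\tilde n\log n/p$, which is also not implied. Note also that the target in (i) is only $(\alpha^2/256)\tilde n^2p$, not $(2/3+\alpha/2)\tilde n^2 p$; you appear to have imported the part-(ii) endgame into part~(i). The paper's actual argument is different: after two applications of \ref{lemma:sparse-expand-or} it isolates the set $L_3\subseteq W_{2i+1}$ of vertices of degree $\ge\eps^{-4}\log n/p$ and proves $|L_3|\ge(\alpha/32)\tilde n$ by contradiction (if $|L_3|$ were small, either a large medium-degree set $S$ would force $\eps^{-2}$-expansion via \ref{lemma:small-expand}, contradicting non-expansion, or $e_F(L_2,W_{2i+1})$ would be too small, contradicting \ref{lemma:medium-dont-shrink}); then \ref{lemma:medium-dont-shrink} applied to $L_3$ gives $(\alpha\tilde np/4)\cdot(\alpha/32)\tilde n\ge(\alpha^2/256)\tilde n^2p$. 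A secondary, fixable issue: in part (ii)'s non-expanding endgame, \ref{lemma:large-stay-large} alone only yields per-vertex degree $(1/3+\alpha/2)\tilde np$, hence at most about $\tilde n^2p/3$ edges; the paper closes with one more application of \ref{lemma:large-to-all} together with \ref{p:good-deg} (minimum degree $(2/3+\alpha)\tilde np$) to reach $(2/3+\alpha/2)\tilde n^2p$.
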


The proof of the claim is quite technical and relies mostly on properties given
by Lemma~\ref{lem:everything} about expansion of edges and triangles; we defer
it to the next section. With this claim at hand we are ready to give the proof
of Claim~\ref{cl:one-reach-all}.

\begin{proof}[Proof of Claim~\ref{cl:one-reach-all}]
  As mentioned earlier, we assume w.l.o.g.\ that $\pi$ is the identity
  permutation as the actual choice of $\pi$ does not play a role in the proof.
  The proof comprises of four natural steps:
  \begin{enumerate}[(1)]
    \item\label{step-1} starting from the edges in $\calI$ show that $e_F(W_{2i
      - 1}, W_{2i}) \geq \eps^{-21} \tilde n \log^2 n$, for all $m/80 \leq i
      \leq m/2$;
    \item\label{step-2} starting from the edges obtained in the previous step
      show that $e_F(W_{2i - 1}, W_{2i}) \geq 2\eps \tilde n^2p$, for all $m/40
      \leq i \leq m/2$;
    \item\label{step-3} knowing that starting from all edges in $\calI$ we can
      reach $2\eps \tilde n^2 p$ edges in some number of steps, show that there
      is at least one edge $e \in \calI$ such that $e_{F'}(W_{2i - 1}, W_{2i})
      \geq 2\eps \tilde n^2p$, for all $m/10 \leq i \leq m/2$, where $F'$ is the
      $e$-projection graph;
    \item\label{step-4} starting from the edges obtained in the previous step
      show that $e_{F'}(W_{2i - 1}, W_{2i}) \geq (2/3)\tilde n^2p$, for all $m/5
      \leq i \leq m/2$.
  \end{enumerate}

  \textbf{Step~\ref{step-1}.} Let us first deal with the trivial case in which
  $p > \eps^2$. For $j \geq 0$ and a vertex $v \in W_j$ with $\deg_F(v,
  W_{f(j)}) \geq 1$, by \ref{p:good-codeg} and as $|X_{j + 1}| \leq \eps^5
  \tilde n$ (recall,~\ref{w:X-size-ub}), it holds that
  \[
    \deg_F(v, W_{j + 1}) \geq \alpha \tilde np^2 - |X_{j + 1}| \geq \alpha
    \eps^4 \tilde n - \eps^5\tilde n \geq \eps^5 \tilde n,
  \]
  and thus also $e_F(W_{2i - 1}, W_{2i}) \geq \eps^5 \tilde n^2 \gg \tilde n
  \log^2 n$, for all $i \geq 1$.

  In the remainder of the proof we assume $p \leq \eps^2$. We first show by
  induction that the following invariant is true for every $j \in \{0, 1,
  \dotsc, m\}$: there exists $E_j \subseteq E_F(W_{f(j)}, W_j)$ such that
  \begin{enumerate}[(i), font=\itshape]
    \item\label{beg-matching-size} $|E_j \cap W_j| \geq \min\{ \eps^2\tilde n,
      |\calI|(1/\eps)^j \}$ and $|E_j \cap W_{f(j)}| \geq (1 - j\eps + f(j)\eps)
      \min\{ \eps^2\tilde n, |\calI|(1/\eps)^{f(j)} \}$, where $E_j \cap W_j$
      and $E_j \cap W_{f(j)}$ denote the set of vertices from $W_j$ and
      $W_{f(j)}$ incident to the edges in $E_j$, respectively,
    \item\label{beg-back-deg} for each $v \in W_{f(j)} \cup W_j$ we have
      $\deg_{E_j}(v) \le \eps/p$.
  \end{enumerate}
  The base of the induction $j = 0$ holds trivially as, by assumption, the
  starting set of edges is a matching of size $|\calI|$. Thus, let $j \ge 1$ and
  let us assume the hypothesis holds for all values smaller than $j$. Let $E_{j
  - 1} \subseteq E_F(W_{f(j - 1)}, W_{j - 1})$ be as given by the induction
  hypothesis for $j - 1$. Consider first the case $f(j) = j - 2$. Note that then
  $f(j - 1) = j - 2$ as well and thus $|E_{j - 1} \cap W_{f(j - 1)}| \geq (1 -
  \eps) \min\{ \eps^2\tilde n, |\calI|(1/\eps)^{j - 2} \}$. We apply
  Lemma~\ref{lem:everything}~$\ref{lemma:star-matching}$ with $W_{j - 1}, W_{j -
  2}, W_{j}$ (as $W_1, W_2, W_3$) and $E_{j - 1}$ (as $F_{12}$) to conclude that
  there exists a subset $U' \subseteq E_{j - 1} \cap W_{j - 2}$ of size
  \[
    |U'| \geq (1 - \eps) \min\{ |E_{j - 1} \cap W_{j - 2}|, \eps/p^2 \} \geq (1
    - 2\eps) \min\{ \eps^2\tilde n, |\calI|(1/\eps)^{j - 2}, \eps/p^2 \}
  \]
  and an $(\alpha \tilde np^2/2)$-star-matching $M$ saturating $U'$. We may
  indeed apply the lemma by $\ref{beg-back-deg}$ and since
  \[
    |X_j| \leq |X| \leq 4t \log n \leq 12|\calI| \log n.
  \]
  As $\alpha\tilde np^2/2 \geq 1/\eps^4$, $E_j := M$ satisfies all the required
  properties. In case $f(j) = j - 1$ we apply
  Lemma~\ref{lem:everything}~$\ref{lemma:star-matching}$ with $W_{j - 2}, W_{j -
  1}, W_j$ (as $W_1, W_2, W_3$). Observing that $|E_{j - 1} \cap W_{j - 1}| \geq
  \min\{ \eps^2 \tilde n, |\calI|(1/\eps)^{j - 1}\}$, and doing the same
  analysis as in the previous case shows that the invariant holds also in this
  case.

  With these preparations at hand we can now finish the proof. For $j \ge
  \frac{m}{200} \ge \frac{\log n}{200}$, $\ref{beg-matching-size}$ implies
  \[
    |E_j \cap W_j| \ge (1 - 2\eps) \min\{ \eps^2 \tilde n, |\calI|
    (1/\eps)^{\log n/200} \} \geq \eps^3 \tilde n,
  \]
  as $\eps$ is chosen in order for $\log (1/\eps) > 200$ to hold. Next, for
  every $\frac{m}{100} \le i \leq \frac{m}{2}$, we apply
  Lemma~\ref{lem:everything}~$\ref{lemma:star-matching}$ with $W_{f(2i - 1)},
  W_{2i - 1}, W_{2i}$ (as $W_1, W_2, W_3$) and $E_{2i - 1}$ (as $F_{12}$) to
  conclude that for every subset $U \subseteq E_{2i - 1} \cap W_{2i - 1}$ of
  size $|U| = \min\{\eps/p^2, |E_{2i - 1} \cap W_{2i - 1}|\}$ there exists a
  subset $U' \subseteq U$ of size $(1 - \eps)|U|$ and an $(\alpha \tilde
  np^2/2)$-star-matching $M$ saturating $U'$ (note that degree assumption needed
  for Lemma~\ref{lem:everything}~$\ref{lemma:star-matching}$ holds by
  $\ref{beg-back-deg}$). This further implies
  \[
    e_F(W_{2i - 1}, W_{2i}) \geq (1 - \eps) |E_{2i - 1} \cap W_{2i - 1}| \geq (1
    - \eps) \eps^3 \tilde n \cdot (\alpha/2) \tilde np^2 \ge \eps^4 \tilde n^2
    p^2 \ge \eps^{-21} \tilde n \log^2 n.
  \]

  \textbf{Step~\ref{step-2}.} Fix $i = m/80$. From Step~$\ref{step-1}$ we know
  that $e_F(W_{2i - 1}, W_{2i}) \geq \eps^{-21}\tilde n \log^2 n$. From
  Claim~\ref{claim:how-blocks-can-grow}~$\ref{claim:sparse-blocks}$, and the
  fact that $(\alpha^2/256) \tilde n^2 p \ge \eps^{-21} \tilde n \log^2 n$, we
  further have that for all $j \ge i + 1$
  \[
    e_F(W_{2j - 1}, W_{2j}) \ge \min \Big\{ \eps^{-(j - i)/2} \cdot e_F(W_{2i -
    1}, W_{2i}), \frac{\alpha^2}{256} \tilde n^2 p \Big\}.
  \]
  Recall that $m \ge \log n$ and that we have chosen $\eps$ sufficiently small
  so that $\eps^{-(j - i)/2} \ge n^2$ for all $j \ge i + m/80$. The assertion in
  Step~\ref{step-2} then follows as $\alpha^2/256 \ge 2\eps$.

  \textbf{Step~\ref{step-3}.} Fix $i = m/40$. From Step~$\ref{step-2}$ we know
  that $e_F(W_{2i - 1}, W_{2i}) \geq 2\eps\tilde n^2 p$. Let $\calI = \calI_1
  \cup \dotsb \cup \calI_{\ceil{\tilde n^{1/3}}}$ be an arbitrary partition of
  $\calI$ such that $|\calI_j| \leq |\calI|/\ceil{\tilde n^{1/3}}$ and let $F_j$
  be the $\calI_j$-projection graph, for every $j \in \{1, \dotsc, \ceil{\tilde
  n^{1/3}}\}$. Since
  \[
    \sum_{j = 1}^{\ceil{\tilde n^{1/3}}} e_{F_j}(W_{2i - 1}, W_{2i}) \ge
    e_{F}(W_{2i - 1}, W_{2i}),
  \]
  there must be a $j^\star \in \{1, \dotsc, \ceil{\tilde n^{1/3}} \}$ such that
  \[
    e_{F_{j^\star}}(W_{2i - 1}, W_{2i}) \ge \frac{2\eps \tilde
    n^2p}{\ceil{\tilde n^{1/3}}} \ge \eps^{-21} \tilde n\log^2 n,
  \]
  where the second inequality holds as $\tilde n^{2/3} p \geq \eps^{-21} \log^2
  n$. By the same argument as in the Step~$\ref{step-2}$, this time for
  $F_{j^\star}$ and $i$, we get that for $k = i + \frac{m}{80}$
  \[
    e_{F_{j^\star}}(W_{2k - 1}, W_{2k}) \ge (\alpha^2/256) \tilde n^2p,
  \]
  implying that we can repeat the argument from above and partition ${\cal
  I}_{j^\star}$ into $\ceil{\tilde n^{1/3}}$ parts. The claim follows by
  applying this argument successively at most two more times.

  \textbf{Step~\ref{step-4}.} Fix $i = m/10$ and let $e \in \calI$ be the edge
  obtained in Step~$\ref{step-3}$ and $F'$ the $e$-projection graph. From
  Step~$\ref{step-3}$ we know that $e_{F'}(W_{2i - 1}, W_{2i}) \geq
  (\alpha^2/256)\tilde n^2p$. Let us choose a constant $L := L(\alpha, \eps)$
  such that $(1 + 2\sqrt\eps)^{L - 2} (\alpha^2/256) > 2$. Observe that not all
  blocks $i, \dotsc, i + L - 2$, can be $(1 + 4\sqrt\eps)$-expanding as then
  Claim~\ref{claim:how-blocks-can-grow}~$\ref{claim:dense-blocks}$ would imply
  \[
    e_{F'}(W_{2(i + L - 2) - 1}, W_{2(i + L - 2)}) \ge (1 + 2\sqrt\eps)^{L - 2}
    \cdot \frac{\alpha^2}{256} \tilde n^2 p \ge 2 \tilde n^2p,
  \]
  which is a contradiction with \ref{p:unif-density}. Let $i^\star \in [i, i + L
  - 2]$ be the smallest index such that the $i^\star$-th block is $(1 +
  4\sqrt\eps)$-non-expanding. Then $e_{F'}(W_{2i^\star + 3}, W_{2i^\star + 4})
  \ge (2/3 + \alpha/2) \tilde n^2 p$, by
  Claim~\ref{claim:how-blocks-can-grow}~$\ref{claim:dense-blocks}$.

  If the $(i^\star + 2)$-nd block is $(1 + 4\sqrt\eps)$-expanding then
  \[
    e_{F'}(W_{2i^\star + 5}, W_{2i^\star + 6}) \geq (1 + 2\sqrt\eps) \cdot
    e_{F'}(W_{2i^\star + 3}, W_{2i^\star + 4}) \geq (2/3 + \alpha/2)\tilde n^2p.
  \]
  If, on the other hand, the $(i^\star + 2)$-nd block is $(1 +
  4\sqrt\eps)$-non-expanding, then $e_{F'}(W_{2i^\star + 7}, W_{2i^\star + 8})
  \geq (2/3 + \alpha/2)\tilde n^2p$, by
  Claim~\ref{claim:how-blocks-can-grow}~$\ref{claim:dense-blocks}$. In addition,
  by applying Lemma~\ref{lem:everything}~$\ref{lemma:dense-doesnt-drop}$ to
  $W_{2i^\star + 3}$, $W_{2i^\star + 4}$, $W_{2i^\star + 5}$ (as $W_1, W_2,
  W_3$) and $E_{F'}(W_{2i^\star + 3}, W_{2i^\star + 4})$ (as $F_{12}$) we obtain
  by symmetry that
  \[
    e_{F'}(W_{2i^\star + 3}, W_{2i^\star + 5}), e_{F'}(W_{2i^\star + 4},
    W_{2i^\star + 5}) \geq (1 - \sqrt\eps)e_{F'}(W_{2i^\star + 3}, W_{2i^\star +
    4}).
  \]
  Applying Lemma~\ref{lem:everything}~$\ref{lemma:dense-doesnt-drop}$ again,
  this time to $W_{f(2i^\star + 5)}, W_{2i^\star + 5}, W_{2i^\star + 6}$ (as
  $W_1, W_2, W_3$) and $E_{F'}(W_{f(2i^\star + 5)}, W_{2i^\star + 5})$ (as
  $F_{12}$), we obtain
  \[
    e_{F'}(W_{2i^\star + 5}, W_{2i^\star + 6}) \geq (1 - \sqrt\eps)^2
    e_{F'}(W_{2i^\star + 3}, W_{2i^\star + 4}) \geq (2/3)\tilde n^2 p,
  \]
  by our choice of $\eps$.

  Repeating this argument, that is starting from $E_{F'}(W_{2i^\star + 5},
  W_{2i^\star + 6})$ or $E_{F'}(W_{2i^\star + 7}, W_{2i^\star + 8})$ depending
  on whether the $(i^\star + 2)$-nd block was $(1 + 4\sqrt\eps)$-expanding or
  not, shows that $e_{F'}(W_{2j - 1}, W_{2j}) \geq (2/3)\tilde n^2p$ for all $j
  \in [i^\star + 2, m/2]$, and the claim follows since $i^\star \leq m/5 - 2$
  (recall that we had set $m = 4\log n - 4$ and thus $m/2$ is even).

  This completes the proof of Claim~\ref{cl:one-reach-all}.
\end{proof}

\subsection{Proof of Claim~\ref{claim:how-blocks-can-grow}}
\label{sec:block-expansion}

In this section we provide the proof of the assertions in
Claim~\ref{claim:how-blocks-can-grow}. We start with some general remarks.
Recall that the $i$-th block consists of steps $2i$ and~$2i + 1$. Recall also
that step~$2i$ extends edges between sets $W_{2i - 1}$ and $W_{2i}$ into set
$W_{2i + 1} \setminus X_{2i + 1}$ via triangles. Similarly, step $2i + 1$
extends edges between sets $W_{f(2i + 1)}$ and $W_{2i + 1}$ into set $W_{2i + 2}
\setminus X_{2i + 2}$. These extensions exactly mimic the setting covered by
Lemma~\ref{lem:everything}. However, the actual set $W_{f(2i + 1)}$ depends on
the value of $b$ of the $b$-pseudo-path that we want to construct. In order to
hide this difference we often use variables $t_1, \dotsc, t_4$ as follows: $t_4
= 2i + 2$, $t_3 = 2i + 1$, $t_2 = f(2i + 1)$, and $t_1$ is the unique element
from $\{2i, 2i - 1\} \setminus \{t_2\}$. One easily checks that this implies
that we can always apply Lemma~\ref{lem:everything} with $W_{t_1}, W_{t_2},
W_{t_3}$ to address the $2i$-th step and to $W_{t_2}, W_{t_3}, W_{t_4}$ (as
$W_1, W_2, W_3$) to address the $(2i + 1)$-st step.

\begin{proof}[Proof of~$\ref{claim:sparse-blocks}$ in
Claim~\ref{claim:how-blocks-can-grow}: $i$-th block is $(1/\eps)$-expanding]
  If both steps $2i$ and $2i + 1$ are $(1/\eps)$-expanding then the claim
  follows directly from the definition of an expanding step together with the
  observation that $f(j) = j - 1$ whenever $j$ is even. Thus, let us assume that
  one of the two steps is $(1/\eps)$-non-expanding. We aim to prove the
  following for every $j \geq 1$:
  \begin{equation}\label{eq:sparse-blocks-eq1}
    \text{\em If} \quad e_F(W_{f(j)}, W_{j}) \ge \eps^{-21} \tilde n \log^2 n
    \quad \text{\em then} \quad e_F(W_{f(j + 1)}, W_{j + 1}) \ge
    \frac{\alpha}{16} e_F(W_{f(j)}, W_{j}).
  \end{equation}
  Note that this implies the claim regardless of whether the non-expanding step
  is the first or the second step within the block, as then
  \[
    e_F(W_{2i + 1}, W_{2i + 2}) \ge \frac{1}{\eps} \cdot \frac{\alpha}{16}
    e_F(W_{2i - 1}, W_{2i}) \ge \frac{1}{\sqrt\eps} \cdot e_F(W_{2i -1},
    W_{2i}),
  \]
  which is what we wanted to prove.

  We now prove \eqref{eq:sparse-blocks-eq1}. So assume $e_F(W_{f(j)}, W_j) \geq
  \eps^{-21} \tilde n \log^2 n$, for some $j \geq 1$. Observe that the
  definition of the function $f$ implies that \eqref{eq:sparse-blocks-eq1}
  involves exactly three sets $W_j$. Indeed, by the definition of $f$ we have
  $f(j + 1) \in \{j, f(j)\}$. Let $t_2 = f(j + 1)$ and $t_1$ be the unique
  element from $\{j, f(j)\} \setminus \{t_2\}$. Let $S \subseteq W_{t_2}$ be the
  set of vertices with the degree at most $\eps^{-4}\log n/p$ in $E_F(W_{t_1},
  W_{t_2})$ and set $M := W_{t_2} \setminus S$. Furthermore, let us denote
  $E_F(W_{t_1}, S)$ and $E_F(W_{t_1}, M)$ by $I_S$ and $I_M$, respectively. If
  $|I_S| \ge e_F(W_{t_1}, W_{t_2})/2 $ then by applying
  Lemma~\ref{lem:everything}~\ref{lemma:small-expand} with $W_{t_1}, W_{t_2},
  W_{j + 1}$ (as $W_1, W_2, W_3$), $S$ (as $U$), and $I_S$ (as $F_{12}$) we get
  \[
    e_F(S, W_{j + 1}) \ge \frac{1}{\eps^4} e_F(W_{t_1}, S) =
    \frac{1}{\eps^4}|I_S| \ge \frac{1}{2\eps^4} e_F(W_{f(j)}, W_{j}) \ge
    \frac{\alpha}{16} e_F(W_{f(j)}, W_{j}).
  \]
  On the other hand, if $|I_M| \ge e_F(W_{t_1}, W_{t_2})/2$, by applying
  Lemma~\ref{lem:everything}~\ref{lemma:medium-dont-shrink} with $W_{t_1},
  W_{t_2}, W_{j + 1}$ (as $W_1, W_2, W_3$), $M$ (as $U$), and $I_M$ (as
  $F_{12}$) we get
  \[
    e_F(M, W_{j+ 1}) \ge \frac{\alpha \tilde np}{4} |M| \ge \sum_{v \in M}
    \frac{\alpha}{8} \deg_{I_M}(v, W_{t_1}) = \frac{\alpha}{8} |I_M| \ge
    \frac{\alpha}{16} e_F(W_{f(j)}, W_{j}),
  \]
  where the second inequality follows from \ref{p:unif-deg}.
\end{proof}

\begin{proof}[Proof of~$\ref{claim:sparse-blocks}$ in
Claim~\ref{claim:how-blocks-can-grow}: $i$-th block is $(1/\eps)$-non-expanding]
  Set $t_4 = 2i + 2$, $t_3 = 2i + 1$, $t_2 = f(2i + 1)$, and let $t_1$ be the
  unique element from $\{2i, 2i - 1\} \setminus \{t_2\}$.

  By assumption, the steps $2i$ and $2i + 1$ are both $(1/\eps)$-non-expanding.
  Thus, in particular $e_F(W_{t_2}, W_{t_3}) < (1/\eps) e_F(W_{t_1}, W_{t_2})$.
  Additionally, since $e_F(W_{t_1}, W_{t_2}) \geq \eps^{-21} \tilde n \log^2 n$,
  we can apply Lemma~\ref{lem:everything}~\ref{lemma:sparse-expand-or} with
  $W_{t_1}, W_{t_2}, W_{t_3}$ (as $W_1, W_2, W_3$), and $E_F(W_{t_1}, W_{t_2})$
  (as $F_{12}$) to get a set $L_2 \subseteq W_{t_2}$ such that for every $v \in
  L_2$ we have $\deg_F(v, W_{t_1}) \geq \eps^{-4}\log n/p$ and
  \begin{equation}\label{eq:bad-block-to-dense-eq1}
    e_F(W_{t_1}, L_2) \geq (1 - \eps) e_F(W_{t_1}, W_{t_2}).
  \end{equation}
  The previous inequality implies $e_F(W_{t_1}, L_2) \geq \eps^{-20} \tilde n
  \log n$ and thus by applying
  Lemma~\ref{lem:everything}~\ref{lemma:medium-dont-shrink} with $W_{t_1},
  W_{t_2}, W_{t_3}$ (as $W_1, W_2, W_3$), $L_2$ (as $U$), and $E_F(W_{t_1},
  L_2)$ (as $F_{12}$) we conclude
  \begin{equation}\label{eq:bad-block-to-dense-eq2}
    e_F(L_2, W_{t_3}) \geq \frac{\alpha \tilde np}{4} |L_2|
    \osref{\ref{p:unif-deg}}{\geq} \frac{\alpha}{8} e_F(W_{t_1}, L_2)
    \osref{\eqref{eq:bad-block-to-dense-eq1}}{\geq} \frac{\alpha}{10}
    e_F(W_{t_1}, W_{t_2})
  \end{equation}
  Since the $i$-th block is non-expanding we have $e_F(W_{t_3}, W_{t_4}) \le
  (1/\eps^2) e_F(W_{t_1}, W_{t_2})$ and thus
  \[
    e_F(W_{t_3}, W_{t_4}) < \frac{1}{\eps^2} e_F(W_{t_1}, W_{t_2})
    \osref{\eqref{eq:bad-block-to-dense-eq2}}\leq \frac{1}{\eps^2}
    \frac{10}{\alpha} e_F(L_2, W_{t_3}) < \frac{1}{\eps^3} e_F(L_2, W_{t_3}).
  \]
  Hence, we can apply Lemma~\ref{lem:everything}~\ref{lemma:sparse-expand-or}
  with $W_{t_2}, W_{t_3}, W_{t_4}$ (as $W_1, W_2, W_3$), and $E_F(W_{t_2},
  W_{t_3})$ (as $F_{12}$) to get a set $L_3 \subseteq W_{t_3}$ such that for
  every $v \in L_3$ we have $\deg_F(v, W_{t_2}) \geq \eps^{-4}\log n/p$ and
  \begin{equation}\label{eq:bad-block-to-dense-eq3}
    e_F(W_{t_2}, L_3) \ge (1 - \eps) e_F(W_{t_2}, W_{t_3}) \geq (1 - \eps)
    e_F(L_2, W_{t_3}).
  \end{equation}
  Note that w.l.o.g.\ we may assume that $L_3$ contains all vertices $v \in
  W_{t_3}$ with $\deg_F(v, W_{t_2}) \geq \eps^{-4}\log n/p$. As
  \eqref{eq:bad-block-to-dense-eq2} and \eqref{eq:bad-block-to-dense-eq3} imply
  $e_F(W_{t_2}, L_3) \geq \eps^{-19} \tilde n \log n$, we can apply
  Lemma~\ref{lem:everything}~\ref{lemma:medium-dont-shrink} with $L_3$ (as $U$)
  to obtain
  \[
    e_F(W_{t_3}, W_{t_4}) \geq e_F(L_3, W_{t_4}) \geq \frac{\alpha \tilde np}{4}
    |L_3|.
  \]
  If $|L_3| \geq (\alpha/32)\tilde n$ then we are done. In the remainder of the
  proof we show that such an assumption is actually true. Towards a
  contradiction assume that $|L_3| < (\alpha/32)\tilde n$. Observe that
  (recall,~\ref{w:class-size-bound} for $W_{t_1}$ in order to apply
  \ref{p:unif-deg})
  \begin{equation}\label{eq:bad-block-to-dense-eq4}
    |L_2| \osref{\ref{p:unif-deg}}{\geq} \frac{e_F(W_{t_1}, L_2)}{(1 +
    \eps)\tilde np} \osref{\eqref{eq:bad-block-to-dense-eq1}}{\geq} \frac{(1 -
    \eps)e_F(W_{t_1}, W_{t_2})}{(1 + \eps)\tilde np} \ge \frac{(1 - \eps)
    \eps^{-21} \log^2 n}{(1 + \eps)p} \geq \frac{\eps^{-19} \log^2 n}{p}.
  \end{equation}
  Let now $S \subseteq Y \subseteq W_{t_3}$ be sets defined as
  \[
    Y := \{v \in W_{t_3} : \deg_F(v, L_2) > \eps|L_2|p \} \qquad \text{and}
    \qquad S := Y \setminus L_3.
  \]
  If $|Y| \geq (\alpha/16)\tilde n$, then $|S| \geq (\alpha/32)\tilde n$, as we
  assumed $|L_3| < (\alpha/32)\tilde n$, and thus
  \[
   e_F(W_2, S)\ge e_F(L_2, S) \geq |S| \cdot \eps|L_2|p
   \osref{\eqref{eq:bad-block-to-dense-eq4}}{\geq} \frac{\eps\alpha}{32}\tilde n
   \cdot \eps^{-19} \log^2 n \geq \eps^{-17} \tilde n \log^2 n.
  \]
  Recall that all vertices in $W_{t_3} \setminus L_3$ have degree in $F$ at most
  $\eps^{-4}\log n/p$ into $W_{t_2}$. Therefore,
  Lemma~\ref{lem:everything}~\ref{lemma:small-expand} applied with $W_{t_2},
  W_{t_3}, W_{t_4}$ (as $W_1, W_2, W_3$), $S$ (as $U$), and $E_F(W_2, S)$ (as
  $F_{12}$) shows
  \[
    e_F(S, W_{t_4}) \ge \eps^{-4} e_F(W_{t_2}, S) \ge \eps^{-3}|S|
    |L_2|p\osref{\eqref{eq:bad-block-to-dense-eq4}}{\geq} \eps^{-3} \cdot
    \frac{\alpha}{16} (1 - 2\eps) e_F(W_{t_1}, W_{t_2}) > \eps^{-2} e_F(W_{t_1},
    W_{t_2}),
  \]
  which is a contradiction with our assumption that the $i$-th block is
  $(1/\eps)$-non-expanding. Therefore, $|Y| < (\alpha/16)\tilde n$. However, as
  by \ref{p:unif-density} and \eqref{eq:bad-block-to-dense-eq4} we know that
  there are at most $\eps^{-3} \log n/p$ vertices $v \in W_{t_3}$ with
  $\deg_G(v, L_2) \geq 2|L_2|p$, we then get
  \[
    e_F(L_2, W_{t_3}) \leq |Y| \cdot 2|L_2|p + \frac{\eps^{-3} \log n}{p} \cdot
    2 \tilde np + \tilde n \cdot \eps|L_2|p \leq \Big( \frac{\alpha}{8} + \eps
    + \eps \Big) |L_2| \tilde np < \frac{\alpha \tilde np}{4} |L_2|,
  \]
  which is a contradiction with the first inequality in
  \eqref{eq:bad-block-to-dense-eq2}. We conclude $|L_3| \geq (\alpha/32)\tilde
  n$ and the claim follows.
\end{proof}

\begin{proof}[Proof of~$\ref{claim:dense-blocks}$ in
Claim~\ref{claim:how-blocks-can-grow}: $i$-th block is $(1 +
4\sqrt\eps)$-expanding]
  Note that if both steps $2i$ and $2i + 1$ are $(1 + 4\sqrt\eps)$-expanding,
  then the statement follows directly from the definition of an expanding step.
  Thus, let us assume one of the two steps is $(1 + 4\sqrt\eps)$-non-expanding
  and let us denote that step with $t \in \{2i, 2i + 1\}$. Furthermore, let $t_2
  = f(t + 1)$ and let $t_1$ be the unique element from $\{f(t), t\} \setminus
  \{t_2\}$. By applying Lemma~\ref{lem:everything}~\ref{lemma:dense-doesnt-drop}
  with $W_{t_1}, W_{t_2}, W_{t + 1}$ (as $W_1, W_2, W_3$) and $E_F(W_{t_1},
  W_{t_2})$ (as $F_{12}$) we get $e_F(W_{f(t + 1)}, W_{t + 1}) \ge (1 -
  \sqrt\eps) e_F(W_{f(t)}, W_{t})$. From here we conclude
  \[
    e_F(W_{2i + 1}, W_{2i + 2}) \ge (1 + 4\sqrt\eps) (1 - \sqrt\eps) e_F(W_{2i
    -1}, W_{2i}) \ge (1 + 2\sqrt\eps) e_F(W_{2i -1}, W_{2i}),
  \]
  which is what we wanted to prove.
\end{proof}

\begin{proof}[Proof of~$\ref{claim:dense-blocks}$ in
Claim~\ref{claim:how-blocks-can-grow}: $i$-th block is $(1 +
4\sqrt\eps)$-non-expanding]
  Set $t_4 = 2i + 2$, $t_3 = 2i + 1$, $t_2 = f(2i + 1)$, and let $t_1$ be the
  unique element from $\{2i, 2i - 1\} \setminus \{t_2\}$. Let us define $L_2$
  and $L_3$ as
  \[
    L_2 = \{ v \in W_{t_2} : \deg_F(v, W_{t_1}) > \tilde np/3 \} \qquad
    \text{and} \qquad L_3 = \{ v \in W_{t_3} : \deg_F(v, W_{t_2}) > \tilde
    np/3 \}.
  \]

  Our first goal is to show $|L_3| \geq (2/3 + \alpha/4)\tilde n$. As the $i$-th
  block is $(1 + 4\sqrt\eps)$-non-expanding, we know that $e_F(W_{t_2}, W_{t_3})
  < (1 + 4\sqrt\eps) e_F(W_{t_1}, W_{t_2})$ and thus by
  Lemma~\ref{lem:everything}~\ref{lemma:dense-expand-or} applied with $W_{t_1},
  W_{t_2}, W_{t_3}$ (as $W_1, W_2, W_3$), $E_F(W_{t_1}, W_{t_2})$ (as $F_{12}$),
  and $32\sqrt\eps/\alpha$ (as $\mu$) we conclude
  \begin{equation}\label{eq:dense-bad-to-two-thirds-eq1}
    e_F(W_{t_1}, L_2) \ge (1 - 32\sqrt\eps/\alpha) e_F(W_{t_1}, W_{t_2}) \geq (1
    - \eps^{1/3}) e_F(W_{t_1}, W_{t_2}).
  \end{equation}
  Next, by Lemma~\ref{lem:everything}~\ref{lemma:large-to-all} with $W_{t_1},
  W_{t_2}, W_{t_3}$ (as $W_1, W_2, W_3$), $L_2$ (as $U$), and $E_F(W_{t_1},
  L_2)$ (as $F_{12}$) we get
  \begin{equation}\label{eq:dense-bad-to-two-thirds-eq2}
    e_F(L_2, W_{t_3}) \ge (1 - \eps^2) e_G(L_2, W_{t_3})
    \osref{\ref{p:good-deg-con}}\ge (1 - 3\eps)e_G(W_{t_1}, L_2) \geq (1 -
    3\eps) e_F(W_{t_1}, L_2).
  \end{equation}
  This together with \eqref{eq:dense-bad-to-two-thirds-eq1} implies
  \begin{align}\label{eq:dense-bad-to-two-thirds-eq3}
    e_F(W_{t_2}, W_{t_3}) \ge e_F(L_2, W_{t_3}) \ge (1 - 3\eps)e_F(W_{t_1}, L_2)
    \geq (1 - 2\eps^{1/3}) e_F(W_{t_1}, W_{t_2}).
  \end{align}
  Once again using the fact that the $i$-th block is $(1 +
  4\sqrt\eps)$-non-expanding, we get
  \begin{equation}\label{eq:dense-bad-to-two-thirds-eq4}
    \begin{aligned}
      e_F(W_{t_3}, W_{t_4}) & < (1 + 4\sqrt\eps)^2 e_F(W_{t_1}, W_{t_2}) \leq
      (1 + 4\sqrt\eps)^2 (1 - 2\eps^{1/3})^{-1} e_F(L_2, W_{t_3}) \\
      & \leq (1 + 4\eps^{1/3}) e_F(L_2, W_{t_3}).
    \end{aligned}
  \end{equation}
  Next, we apply Lemma~\ref{lem:everything}~\ref{lemma:dense-expand-or} with
  $W_{t_2}, W_{t_3}, W_{t_4}$ (as $W_1, W_2, W_3$), $E_F(L_2, W_{t_3})$ (as
  $F_{12}$), and $32\eps^{1/3}/\alpha$ (as $\mu$). We can do that since by
  \eqref{eq:dense-bad-to-two-thirds-eq3} and
  \eqref{eq:dense-bad-to-two-thirds-eq4} we know that $e_F(L_2, W_{t_3}) \geq
  \eps \tilde n^2 p$ and $e_F(W_{t_3}, W_{t_4}) < (1 + \mu\alpha/8) e_F(L_2,
  W_{t_3})$. Therefore, Lemma~\ref{lem:everything}~\ref{lemma:dense-expand-or}
  and \eqref{eq:dense-bad-to-two-thirds-eq2} imply
  \begin{equation}\label{eq:dense-bad-to-two-thirds-eq5}
    e_F(L_2, L_3) \ge \big( 1 - \tfrac{32\eps^{1/3}}{\alpha} \big) e_F(L_2,
    W_{t_3}) \geq (1 - \eps^{1/4}) e_F(L_2, W_{t_3})
    \osref{\eqref{eq:dense-bad-to-two-thirds-eq2}}\geq (1 - 2\eps^{1/4})
    e_G(L_2, W_{t_3}).
  \end{equation}
  Furthermore, \eqref{eq:dense-bad-to-two-thirds-eq5} and \ref{p:good-deg} show
  \[
    e_F(L_2, L_3) \geq (1 - 2\eps^{1/4}) e_G(L_2, W_{t_3}) \geq (1 -
    2\eps^{1/4}) \cdot |L_2| (2/3 + \alpha)\tilde np \geq (2/3 + \alpha/2) |L_2|
    \tilde np.
  \]
  From \ref{p:unif-density}, the fact that $|L_2| \geq (\eps/2)\tilde n$
  (follows from \eqref{eq:dense-bad-to-two-thirds-eq1} and \ref{p:unif-deg}),
  and $e_F(L_2, L_3) \ge (2/3 + \alpha/2) |L_2| \tilde np$, we obtain $|L_3| \ge
  (2/3 + \alpha/4) \tilde n$.

  Next, we define
  \[
    \calL = \{ v \in W_{2i + 1} : \deg_F(v, W_{2i + 2}) > \tilde np/3 \}
    \quad \text{and} \quad \calL' = \{ v \in W_{2i + 2} : \deg_F(v, W_{2i +
    1}) > \tilde np/3 \}.
  \]
  We aim to show that $|\calL|,|\calL'| \ge (2/3) \tilde n$. Since $|L_3| \ge
  (2/3 + \alpha/4)\tilde n$, we can apply
  Lemma~\ref{lem:everything}~\ref{lemma:large-stay-large} with $W_{f(2i + 1)},
  W_{2i + 1}, W_{2i + 2}$ (as $W_1, W_2, W_3$), and $L_3$ (as $U$) to conclude
  that $|\calL'| \ge (1 - \eps)\tilde n$, as desired. Similarly, applying
  Lemma~\ref{lem:everything}~\ref{lemma:large-to-all} with $W_{f(2i + 1)}, W_{2i
  + 1}, W_{2i + 2}$ (as $W_1, W_2, W_3$), $L_3$ (as $U$), and $E_{F}(W_{t_2},
  L_3)$ (as $F_{12}$) we get that there exists a $L_3' \subseteq L_3$ of size
  $|L_3'| \geq (1 - 3\eps^3)|L_3|$ such that for all $v \in L_3'$ we have
  $\deg_{F}(v, W_{2i + 2}) \geq (2/3 + \alpha/2) \tilde np$. Clearly $L_3'
  \subseteq \calL$ and $|L_3'|\ge (2/3) \tilde n$.

  Set $r_5 = 2i + 3$, $r_4 = f(2i + 3)$, and let $r_3$ be the unique element
  from $\{2i + 2, 2i + 1\} \setminus \{r_4\}$. Moreover, let $L_4$ and $L_5$ be
  defined as
  \[
    L_4 = \{ v \in W_{r_4} : \deg_F(v, W_{r_3}) > \tilde np/3 \} \qquad
    \text{and} \qquad L_5 = \{ v \in W_{r_5} : \deg_F(v, W_{r_4}) > \tilde
    np/3 \}.
  \]
  Note that, depending on $f(2i + 3)$, the set $L_4$ lies either in $W_{2i + 1}$
  or $W_{2i + 2}$. Thus, since $|\calL|, |\calL'| \geq (2/3)\tilde n$ and $L_4
  \in \{\calL, \calL'\}$, we have $|L_4| \geq (2/3)\tilde n$ as well.

  Having this at hand we can finally show $e_F(W_{2i + 3}, W_{2i + 4}) \ge (2/3
  + \alpha/2)\tilde n^2p$. By applying
  Lemma~\ref{lem:everything}~\ref{lemma:large-stay-large} with $W_{r_3},
  W_{r_4}, W_{r_5}$ (as $W_1, W_2, W_3$), and $L_4$ (as $U$) we get that $|L_5|
  \ge (1 - \eps)\tilde n$. Finally, we apply
  Lemma~\ref{lem:everything}~\ref{lemma:large-to-all} with $W_{r_4}, W_{r_5},
  W_{2i + 4}$ (as $W_1, W_2, W_3$), $L_5$ (as $U$), and $E_F(W_{r_4}, L_5)$ (as
  $F_{12}$) to obtain
  \[
    e_F(W_{2i + 3}, W_{2i + 4}) \ge (1 - \eps^2) e_G(L_5, W_{2i + 4})
    \osref{\ref{p:good-deg-con}}\ge (1 - \eps^2) \cdot |L_5| (2/3 +
    \alpha)\tilde n p \ge (2/3 + \alpha/2) \tilde n^2 p.
  \]
  This concludes the proof of Claim~\ref{claim:how-blocks-can-grow}.
\end{proof}

\section{Concluding remarks}\label{sec:conclusion}

In this paper we introduce the notion of $H$-resilience which measures the
fraction of $H$-copies touching a given vertex that an adversary may delete
without destroying a certain given property. We demonstrate the usefulness of
the definition by showing that the $K_3$-resilience of $\Gnp$ w.r.t.\ the
containment of the square of a Hamilton cycle is w.h.p.\ $5/9 \pm o(1)$. In
other words, the adversary needs to delete more than a $(5/9)$-fraction of the
triangles lying on a vertex in order to destroy all copies of $C_n^2$ in
$\Gnp$. Our result is optimal with respect to the constant $5/9$ and the density
$p$ up to logarithmic factors.

Having the notion of $H$-resilience at hand, one can ask for similar statements
for other (spanning) graph properties. Of particular interest is the question of
the $K_3$-resilience of $\Gnp$ with respect to the containment of a triangle
factor. Theorem~\ref{thm:K3-res-ub} shows that also here the resilience is at
most $5/9 + o(1)$. Moreover, as $C_n^2$ contains a triangle factor, provided $3
\mid n$, it follows that this is the correct one whenever $p \gg n^{-1/2} \log^3
n$. However, the threshold for the appearance of a triangle factor is
significantly lower than the threshold for the appearance of a $C_n^2$, cf.\ the
seminal result of Johansson, Kahn, and Vu~\cite{johansson2008factors}. In light
of this, we conjecture that the resilience variant of this result holds when $p$
is close to the threshold for having a $K_3$-factor.

An analogous construction as in Theorem~\ref{thm:K3-res-ub} shows that the
$K_r$-resilience for a $K_r$-factor is at most $1 - (1 - 1/r)^{r - 1}$. It is
thus tempting to conjecture that this value is also the $K_r$-resilience of
$\Gnp$ w.r.t.\ containment of a $K_r$-factor, provided that $p \gg n^{-2/r}
(\log n)^{1/e(K_r)}$, as well as $C_n^{r - 1}$, provided that $p \gg n^{-1/r}$.
The conjecture is true in the case when $p = 1$, as every graph with $(1 -
1/r)^{r - 1} \binom{n}{r - 1}$ copies of $K_r$ at each vertex must have a
minimum degree of at least $(r - 1)n/r$ and the statement thus follows from the
theorem of Hajnal and Szemer\'{e}di~\cite{hajnal1970proof} and
Theorem~\ref{thm:KSS}.

{\small \bibliographystyle{abbrv} \bibliography{references}}

\end{document}